\documentclass[11pt]{article}

\usepackage[T1]{fontenc}
\usepackage[utf8]{inputenc}
\usepackage{amsmath,amsthm,amssymb,mathtools}
\usepackage{bm}
\usepackage{xcolor}
\usepackage{enumitem}
\usepackage{hyperref}
\hypersetup{
  hidelinks,
  pdftitle={Dry-Friction Inertial Dynamics with Implicit Hessian-Driven Damping},
  pdfauthor={Samir Adly},
  pdfsubject={Finite-time stabilization, shadowing, and proximal discretization},
  pdfkeywords={dry friction, implicit Hessian-driven damping, shifted-gradient force,
    inertial dynamics, finite-time stabilization, proximal algorithm}
}
\usepackage{microtype}
\usepackage[a4paper,margin=0.8in]{geometry}
\usepackage{subcaption}
\usepackage{graphicx}
\usepackage{float}
\usepackage{placeins}
\usepackage{booktabs}
\usepackage{array}
\usepackage{tabularx}

\newcolumntype{Y}{>{\raggedright\arraybackslash}X}

\theoremstyle{plain}
\newtheorem{theorem}{Theorem}[section]
\newtheorem{proposition}[theorem]{Proposition}
\newtheorem{lemma}[theorem]{Lemma}
\newtheorem{corollary}[theorem]{Corollary}
\theoremstyle{definition}
\newtheorem{definition}[theorem]{Definition}

\newtheorem{remark}[theorem]{Remark}
\newtheorem{assumption}[theorem]{Assumption}

\newcommand{\R}{\mathbb{R}}
\newcommand{\N}{\mathbb{N}}
\newcommand{\B}{\mathbb{B}}
\newcommand{\Hc}{\mathcal H}
\newcommand{\E}{\mathcal E}
\newcommand{\Id}{\mathrm{Id}}
\newcommand{\Dom}{\mathrm{Dom}\,}
\newcommand{\dom}{\mathrm{dom}\,}
\newcommand{\argmin}{\mathop{\mathrm{arg\,min}}}
\newcommand{\prox}{\mathrm{prox}}
\newcommand{\dist}{\mathrm{dist}}
\newcommand{\bd}{\mathrm{bd}}
\newcommand{\interior}{\mathrm{int}}
\newcommand{\ip}[2]{\langle #1,#2\rangle}
\newcommand{\norm}[1]{\left\lVert #1\right\rVert}
\newcommand{\GammaZero}{\Gamma_{0}(\Hc)}
\newcommand{\Cfr}{C_{\phi}}
\newcommand{\Tstop}{T_{\mathrm{stop}}}
\newcommand{\Kstop}{K_{\mathrm{stop}}}

\definecolor{revisionorange}{RGB}{220,100,0}

\AtBeginDocument{
  \setlength{\abovedisplayskip}{4pt plus 2pt minus 2pt}
  \setlength{\belowdisplayskip}{4pt plus 2pt minus 2pt}
  \setlength{\abovedisplayshortskip}{2pt plus 1pt}
  \setlength{\belowdisplayshortskip}{2pt plus 1pt}
}

\makeatletter
\renewenvironment{abstract}
  {\par\smallskip\small\noindent\textbf{Abstract. }\ignorespaces}
  {\par\medskip}
\makeatother

\title{{Dry-Friction Inertial Dynamics with Implicit Hessian-Driven
Damping: Finite-Time Stabilization, Shadowing, and Proximal Discretization}}
\author{
Samir Adly\thanks{Laboratoire XLIM, Universit\'e de Limoges, 123 avenue
Albert Thomas, 87060 Limoges CEDEX, France. Email:
\texttt{samir.adly@unilim.fr},
\url{https://www.unilim.fr/pages_perso/samir.adly/}.}
}
\date{}

\begin{document}
\maketitle

\begin{abstract}
In a real Hilbert space $\Hc$, we study the following inertial differential inclusion
$$
\ddot x(t)+\gamma\dot x(t)+\partial\phi(\dot x(t))
+\nabla f(x(t)+\beta\dot x(t))\ni0,
$$
where $\gamma>0$ is the viscous damping coefficient, and $\phi$ is a convex potential with a sharp minimum at the origin that models the dry friction damping (typically $\phi=r\Vert\cdot\Vert$ where $r>0$ is the dry-friction parameter). The function $f$ represents the smooth potential to be minimized, and the shifted-gradient evaluation 
$\nabla f(x(t)+\beta\dot x(t))$ is known as the implicit Hessian-driven model. Here $\beta\geq 0$ represents the corresponding Hessian-driven parameter.  
Both the explicit and the implicit Hessian-driven dynamics are know to attenuate the oscillations that occurs in inertial systems. Our contribution concerns the quantitative analysis of this continuous dynamic and its temporal discretization counter-part under the action of these three combined dampings: viscous damping, dry friction, and implicit Hessian-driven damping. We establish a global well-posedness, an exact Lyapunov analysis adapted to the implicit Hessian-driven damping, finite length of the trajectory and its strong convergence to an approximate critical point $x_\infty$ of $f$ satisfying: $-\nabla f(x_\infty)\in\partial \phi(0)$. 
We show finite-time stabilization under a strict interior condition on the terminal force. 
We also compare the explicit and the implicit Hessian-driven dynamics and show that their trajectories differ by $O(\beta^2)$ on finite horizons. A temporal semi-implicit discretization of the dynamic above leads to a proximal implicit Hessian-driven algorithm based on one shifted-gradient evaluation. 
 We derive its discrete Lyapunov analysis, asymptotic convergence and, under a strict terminal margin, finite convergence of the discrete iterates. 
 We also establish an $O(\sqrt h)$ continuous-discrete estimate and show that the implicit and the explicit Hessian algorithms differ by $O(\beta^2+\beta h)$ over finite physical horizons.
 Finally, an exact quadratic modal analysis identifies the Schur stability boundary and quantifies the gap with the nonlinear Lyapunov analysis. Numerical experiments illustrate each of these distinct conclusions.
\end{abstract}

{\bf Keywords.} Dry friction, implicit Hessian-driven damping,
shifted-gradient force, inertial dynamics, finite-time stabilization, shadowing, proximal
algorithm.

{\bf Mathematics Subject Classification.} 37N40, 34A60, 34G25, 49J53,
49M37, 65K10, 90C25.

\tableofcontents

% ============================================================================
\section{Introduction}

Second-order dissipative systems provide a natural link between
optimization algorithms and mechanical models.  Inertial dynamics involving viscous damping friction generally produces asymptotic decay and may generate oscillations. In contrast, dry
friction is modeled by a nonsmooth velocity potential $\phi$ satisfying
$0\in\operatorname{int}(\partial\phi(0))$. This condition introduces a
threshold. Under suitable assumptions, the velocity vanishes in finite time
and remains at zero. Continuous dynamics and inertial proximal
algorithms based on dry friction have been studied in
\cite{AdlyAttouchCabot2006,AdlyAttouch2020,AdlyAttouch2021,AdlyAttouch2022,AdlyAttouch2023,AdlyAttouchLe2024}.

Hessian-driven damping provides another way to reduce the
oscillations known to occur in inertial systems. It may be viewed as a geometrical damping governed by the Hessian operator of $f$. In its explicit form, it contains the term
$\beta\nabla^2f(x(t))\dot x(t)=\beta\frac{d}{dt}\nabla f(x(t))$ along a
sufficiently smooth trajectory. This approach was introduced in
\cite{AlvarezAttouchBolteRedont2002}. Its combination with dry friction was
studied in the continuous and discrete settings in
\cite{AdlyAttouch2020,AdlyAttouch2021}. The dynamics
$$
\ddot x(t)+\gamma(t)\dot x(t)
+\nabla f\bigl(x(t)+\beta(t)\dot x(t)\bigr)=0
$$
was introduced by Alecsa, L\'aszl\'o, and Pin\c{t}a
\cite{AlecsaLaszloPinta2021}, who studied the continuous system and the
inertial algorithms obtained by temporal discretization. This system is
now called the Inertial System with Implicit Hessian Damping (ISIHD). Later
works consider Tikhonov regularization \cite{Laszlo2025}, stochastic and
nonsmooth perturbations \cite{MaulenSotoFadiliAttouchOchs2026}, and strongly
quasiconvex functions in finite dimensional settings \cite{GaoSunHe2026}.

Following this terminology, we call the shifted-gradient term implicit
Hessian-driven damping. We study its interaction with dry friction when the
viscous and shift coefficients are constant. We are mainly interested in
finite-time stabilization, comparison with explicit Hessian-driven damping~\cite{AdlyAttouch2020},
and proximal discretization with finite convergence of the discrete
iterates. When $\phi\equiv0$ and $\beta=0$, the system reduces to the heavy
ball with friction.

%%%%%%%%%%%%%%%%%
The present paper continues the work \cite{AdlyAttouch2020}, where dry
friction was combined with explicit Hessian-driven damping in continuous and
discrete models. It is then natural to study the implicit case and to compare
the two approaches. Another related contribution is
\cite{AdlyAttouch2023}. There, time scaling and averaging lead to accelerated
dynamics with dry friction, asymptotically vanishing viscous damping, and
implicit Hessian-driven damping for convex optimization. Here the viscous
and shift coefficients are constant. The continuous analysis does not
require convexity of $f$.

The stochastic system studied in
\cite{MaulenSotoFadiliAttouchOchs2026} has time-dependent coefficients and a
convex objective. It also includes stochastic perturbations. In \cite{GaoSunHe2026}, the authors considered the autonomous shifted-gradient equation
without dry friction for strongly quasiconvex functions. Dry friction and Hessian-driven damping by means of split monotone inclusions was studied in \cite{TangHuang2026}. In the present paper, we compare the implicit
and explicit systems before and after permanent stopping. We also compare
their stopping times and stopping positions.
%%%%%%%%%%%%%%%%%%%%%%%%%

The ISIHD model itself is not new. Our contributions concern its
combination with dry friction. We prove an energy
estimate and characterize the limiting equilibrium. A strict interior
condition gives finite-time stabilization, while a scalar example shows
what may occur when the limiting force lies on the boundary. We compare the
implicit and explicit systems with an optimal estimate of order
$O(\beta^2)$. We then discretize the implicit system and study the limit as
$h\downarrow0$. Finally, we compare the sufficient step-size condition
obtained from the nonlinear Lyapunov analysis with the exact Schur stability
condition for quadratic functions.

Our first result concerns the implicit Hessian-driven inclusion with
dry friction. We do not assume that $f$ is convex or that its gradient is
globally Lipschitz continuous. We assume that $f$ is bounded from below,
that its gradient is Lipschitz continuous on bounded sets, and that
$\phi$ is proper, lower semicontinuous, and convex, with
$r\B\subseteq\partial\phi(0)$ for some $r>0$, where $\B:=\{u\in\Hc:\|u\|\leq1\}$ denotes the closed
unit ball of $\Hc$. An energy written in terms of
$y=x+\beta\dot x$ allows us to prove global well-posedness, finite length,
and strong convergence to a point $x_\infty$ satisfying
$$
-\nabla f(x_\infty)\in\partial\phi(0).
$$
If $-\nabla f(x_\infty)$ belongs to the interior of
$\partial\phi(0)$, the trajectory becomes constant in finite time. We give a
scalar example showing that the same conclusion may fail when
$-\nabla f(x_\infty)$ lies on the boundary.

We next compare the implicit and explicit Hessian-driven models. If the
Hessian is locally Lipschitz continuous, their trajectories differ by
$O(\beta^2)$ on every fixed time interval. We then obtain estimates of order $O(\beta^2)$ for
the two stopping times and stopping positions, and the trajectory estimate
holds on $[0,+\infty[$. The two systems coincide for quadratic functions. A
one-dimensional nonquadratic example shows that the order $O(\beta^2)$ is
optimal under the assumptions considered here.

Our third result concerns the discretization of the continuous dynamic. We treat inertia, viscous
damping, and dry friction semi-implicitly, and evaluate the gradient at the
shifted point. The resulting proximal algorithm requires one gradient
evaluation and one proximal evaluation at each iteration. We derive a
discrete energy and determine when its quadratic form is positive
semidefinite. Under this step-size condition, the sequence has finite
length and converges to a point satisfying the same equilibrium condition as
the continuous system. A strict interior condition gives finite convergence
of the discrete iterates. When $f$ is strongly convex, the equilibrium
condition also gives explicit estimates of the distance and the objective
gap to the unique minimizer of $f$.

We also study the approximation as $h\downarrow0$. The piecewise
affine interpolants differ from the continuous trajectory by
$O(\sqrt h)$ on bounded time intervals. Under a strict interior condition,
the physical stopping times $hK_{\mathrm{stop}}^h$ and the stopping
positions converge with the same order, and the trajectory estimate holds
on $[0,+\infty[$. The implicit and explicit algorithms differ by
$O(\beta^2+\beta h)$ on bounded physical time intervals and coincide for
quadratic functions. For quadratic functions without dry friction, a modal
analysis gives the exact Schur stability condition and shows that the
sufficient condition obtained from the nonlinear Lyapunov analysis is
conservative.

The numerical experiments illustrate these results. We first consider
the quadratic identity and the exact modal boundary. We then study the
damping effect for nonlinear functions. The last example concerns the case
where the limiting force lies on the boundary of $\partial\phi(0)$.

The arguments used here differ from those developed for dry-friction systems without Hessian damping \cite{AdlyAttouchCabot2006,AdlyAttouch2022} and for systems with explicit Hessian-driven damping \cite{AdlyAttouch2020,AdlyAttouch2021}. Without Hessian damping, the analysis relies on the usual mechanical energy written in terms of $x$ and $\dot x$. In the explicit case, the Hessian term can be treated through the identity
$\nabla^2 f(x)\dot x=\frac{d}{dt}\nabla f(x)$. This argument does not apply to the force $\nabla f(x+\beta\dot x)$. The present analysis instead uses the shifted variable $y=x+\beta\dot x$ and an energy containing both $f(y)$ and $\beta\phi(\dot x)$. The comparison with the explicit system then requires a second-order Taylor estimate, while the discretization leads to a different energy and a new step-size analysis.

The article is organized as follows. Section~\ref{sec:continuous-analysis}
studies the continuous implicit Hessian-driven system and its finite-time
stabilization. Section~\ref{sec:finite-horizon-comparison} compares the
implicit and explicit models. Section~\ref{sec:discrete-analysis} introduces
the proximal algorithm and proves its convergence properties.
Section~\ref{sec:consistance-disc} studies the limit as
$h\downarrow0$, the comparison of the two discretizations, and quadratic
stability. Section~\ref{sec:numerical-validation} contains the numerical
experiments. The appendices give the proof of the optimality result and the
algebraic verification of the discrete Lyapunov matrix.

% ============================================================================
\section{{Continuous Implicit Hessian-Driven Dynamics}}
\label{sec:continuous-analysis}

\subsection{{Models and Strong Solutions}}

Let $\Hc$ be a real Hilbert space with inner product $\ip{\cdot}{\cdot}$ and
norm $\norm{\cdot}$. Let $f:\Hc\to\R$ be differentiable and bounded from
below, and let $\phi:\Hc\to\R\cup\{+\infty\}$ be a proper lower
semicontinuous convex function. We fix $\gamma>0$, $\beta\geq0$, and initial
data
$$
x_0\in\Hc,
\qquad
v_0\in\Dom(\partial\phi).
$$

The three dynamics compared in this work are the following Cauchy problems.
The inclusion in each system is understood for almost every
$t\in[0,+\infty[$.
\begin{align}
\tag{DF}\label{eq:base-dynamics}
&\left\{
\begin{gathered}
\ddot x(t)+\gamma\dot x(t)+\partial\phi(\dot x(t))
+\nabla f(x(t))\ni0,\quad t\in[0,+\infty[,\\
x(0)=x_0,\qquad \dot x(0)=v_0;
\end{gathered}
\right.\\[0.4em]
\tag{EH$_\beta$}\label{eq:explicit-dynamics}
&\left\{
\begin{gathered}
\ddot x(t)+\gamma\dot x(t)+\partial\phi(\dot x(t))
+\beta\nabla^2 f(x(t))\dot x(t)+\nabla f(x(t))\ni0,
\quad t\in[0,+\infty[,\\
x(0)=x_0,\qquad \dot x(0)=v_0;
\end{gathered}
\right.\\[0.4em]
\tag{IH$_\beta$}\label{eq:implicit-dynamics}
&\left\{
\begin{gathered}
\ddot x(t)+\gamma\dot x(t)+\partial\phi(\dot x(t))
+\nabla f(x(t)+\beta\dot x(t))\ni0,
\quad t\in[0,+\infty[,\\
x(0)=x_0,\qquad \dot x(0)=v_0.
\end{gathered}
\right.
\end{align}
System \eqref{eq:base-dynamics} has no Hessian correction and was studied
in~\cite{AdlyAttouch2022}. System \eqref{eq:explicit-dynamics}, studied
in~\cite{AdlyAttouch2020}, contains the explicit Hessian-driven force.

System \eqref{eq:implicit-dynamics} is the implicit Hessian-driven
system with dry friction and constant coefficients. The system without dry
friction was introduced in \cite{AlecsaLaszloPinta2021}. The case with dry
friction and time-dependent coefficients arises from time scaling and
averaging in \cite{AdlyAttouch2023}. The autonomous smooth equation without
dry friction is studied for strongly quasiconvex functions in
\cite{GaoSunHe2026}. Here we analyze \eqref{eq:implicit-dynamics} directly
and compare it with \eqref{eq:explicit-dynamics}. The gradient in
\eqref{eq:implicit-dynamics} is evaluated at
$$
y(t)=x(t)+\beta\dot x(t).
$$
The force $\nabla f(x+\beta\dot x)$ and the explicit Hessian-driven
force are different. Taylor's formula gives their difference, and they
coincide when $f$ is quadratic. This comparison is carried out in
Section~\ref{sec:finite-horizon-comparison}. The word implicit refers to the
Hessian term obtained by expanding the gradient at the shifted point. It
does not mean that the evaluation of the gradient requires an implicit
nonlinear equation.

The continuous analysis is carried out under the following assumptions.

\begin{assumption}
\label{ass:basic}
The function $f$ belongs to $C^1(\Hc)$, its gradient is Lipschitz continuous
on bounded subsets of $\Hc$, and $\inf_{\Hc} f> -\infty$. The function
$\phi$ belongs to $\GammaZero$, satisfies $\phi(0)=\min\phi=0$, and there
exists $r>0$ such that
$$
r\B\subseteq\partial\phi(0).
$$
\end{assumption}

The following lemma gives an equivalent form of the sharp minimum
condition.

\begin{lemma}
Set $\Cfr=\partial\phi(0)$. Under the standing Assumption~\ref{ass:basic} on $\phi$,
$$
r\B\subseteq\Cfr
\quad\Longleftrightarrow\quad
\phi(v)\ge r\norm{v}
\quad\text{for every }v\in\Hc.
$$
\end{lemma}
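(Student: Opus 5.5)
We prove the two implications separately. The only facts used are the subgradient inequality at the origin, the normalization $\phi(0)=\min\phi=0$ from Assumption~\ref{ass:basic}, and the Hahn--Banach description of closed convex sets by their support function.

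\emph{Forward direction.} Suppose $r\B\subseteq\Cfr$, and fix $v\in\Hc$. If $v=0$, the inequality $\phi(0)=0\ge0$ is trivial. If $v\neq0$, set $u:=r\,v/\norm{v}$. Then $u\in r\B\subseteq\partial\phi(0)$, so the subgradient inequality at $0$ gives
$$
\phi(v)\ge\phi(0)+\ip{u}{v-0}=r\norm{v}.
$$
This inequality also holds when $\phi(v)=+\infty$, since it is then trivially true.

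\emph{Reverse direction.} Suppose $\phi(w)\ge r\norm{w}$ for all $w\in\Hc$, and let $u\in r\B$. For any $v$, Cauchy--Schwarz gives
$$
\phi(v)\ge r\norm{v}\ge\ip{u}{v}=\phi(0)+\ip{u}{v-0}.
$$
This is exactly the statement $u\in\partial\phi(0)$. Hence $r\B\subseteq\Cfr$.

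Neither direction needs the support-function machinery, because the subgradient inequality can be applied pointwise in both directions. So there is no real obstacle here. The one point to be careful about is where the normalization $\phi(0)=0$ enters. It is what makes the subgradient inequality at the origin read $\phi(v)\ge\ip{u}{v}$ rather than $\phi(v)\ge\phi(0)+\ip{u}{v}$. Both directions use this: the forward direction needs it to obtain exactly $r\norm{v}$ on the right-hand side, and the reverse direction needs it to match the inequality $\phi(v)\ge\ip{u}{v}$ with the definition of $\partial\phi(0)$.

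As an optional remark, one could add that $\sigma_{\Cfr}=\phi'(0;\cdot)$. Positive homogeneity then shows that the condition is equivalent to $\phi'(0;v)\ge r\norm{v}$, but this is not needed for the lemma.
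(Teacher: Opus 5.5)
Your proof is correct and is the paper's argument: the subgradient inequality at the origin applied to $rv/\norm{v}$ for the forward direction, and Cauchy--Schwarz together with $\phi(0)=0$ for the converse. One small caveat on your unused aside: for extended-valued $\phi$, in general one only has $\sigma_{\Cfr}=\operatorname{cl}\phi'(0;\cdot)$, with equality when $\phi$ is continuous at $0$; the equivalence with $\phi'(0;v)\ge r\norm{v}$ nevertheless remains true.
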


\begin{proof}
Suppose first that $r\B\subseteq\partial\phi(0)$. For $v\ne0$, apply the
subgradient inequality at the origin to $r v/\norm{v}$. This gives
$\phi(v)\geq r\norm{v}$. The inequality is immediate at $v=0$.

Conversely, let $p\in r\B$. Then
$$
\ip{p}{v}\leq r\norm{v}\leq\phi(v)-\phi(0)
$$
for every $v\in\Hc$. Hence $p\in\partial\phi(0)$, which proves the reverse
inclusion.
\end{proof}

\begin{remark}
{When $\phi(v)=r\norm{v}$, we use throughout the paper the expression
\emph{simplified Coulomb friction}. From a mechanical viewpoint, this law
should not be identified with the full Coulomb law for unilateral contact,
which involves the normal contact reaction and, in its usual static-dynamic
formulation, distinguishes two friction coefficients. The potential
$\phi(v)=r\norm{v}$ retains the static threshold at zero velocity. Its
convex structure also gives a direct connection with proximal optimization
algorithms, which is the main reason for adopting this formulation here.}
\end{remark}

We first define the notion of strong solution used below.

\begin{definition}
Let $T>0$. A strong solution of \eqref{eq:implicit-dynamics} on $[0,T]$ is a
pair $(x,v)$ such that $x,v\in W^{1,2}(0,T;\Hc)$, $v=\dot x$ almost
everywhere, $x(0)=x_0$, $v(0)=v_0$, and there exists
$\xi\in L^2(0,T;\Hc)$ satisfying
$$
\xi(t)\in\partial\phi(v(t))
\quad\text{and}\quad
\dot v(t)+\gamma v(t)+\xi(t)+\nabla f(x(t)+\beta v(t))=0,
$$
for almost every $t\in(0,T)$.
\end{definition}

Strong solutions of \eqref{eq:base-dynamics} and
\eqref{eq:explicit-dynamics} are defined in the same way, with the
corresponding smooth force.

We distinguish permanent stopping from an isolated zero of the
velocity.

\begin{definition}
For a global strong solution, define
$$
\Tstop=\inf\left\{T\ge0:\ v(t)=0\text{ for every }t\ge T\right\},
$$
with the convention $\inf\varnothing=+\infty$.
\end{definition}

\subsection{Well-Posedness and Energy Dissipation}

On the phase space $\Hc\times\Hc$, introduce the two operators $\mathcal A:\,\Hc\times\Hc\rightrightarrows \Hc\times\Hc$ and $\mathcal B:\,\Hc\times\Hc\to \Hc\times\Hc$ defined respectively by
$$
\mathcal A(x,v)=\{0\}\times\partial\phi(v)
\quad
\mbox{ and}
\quad
\mathcal B(x,v)=\bigl(-v,\gamma v+\nabla f(x+\beta v)\bigr).
$$
The operator $\mathcal A$ is maximally monotone, while $\mathcal B$ is
Lipschitz continuous on bounded subsets of the phase space.

We use the following precise form of the Lipschitz perturbation theorem for
maximally monotone evolution equations. If $A$ is maximally monotone on a Hilbert
space, $F$ is globally Lipschitz continuous, and $z_0\in\Dom(A)$, then
\[
\dot z(t)+A(z(t))\ni-F(z(t)),\qquad z(0)=z_0,
\]
has a unique global strong solution; on every bounded time interval its
derivative and the corresponding selection from ${A(z)}$ are essentially
bounded. This is the standard strong Cauchy theory for a Lipschitz
perturbation of a maximally monotone operator; see
\cite[Chapter~III]{Brezis1973} and \cite[Chapter~4]{Barbu2010}.

We consider the energy
\begin{equation}
\E_{\beta}(t)
=f(y(t))-\inf_{\Hc}f
+\frac{1+\beta\gamma}{2}\norm{v(t)}^2
+\beta\phi(v(t)).
\label{eq:continuous-energy}
\end{equation}
When $\beta=0$, the last term is omitted rather than interpreted as
$0\cdot(+\infty)$.

The next theorem gives global well-posedness and the energy estimate.
\begin{theorem}
\label{thm:well-posedness}
\label{thm:continuous-energy}
Under Assumption~\ref{ass:basic}, for every
$(x_0,v_0)\in\Hc\times\Dom(\partial\phi)$, problem
\eqref{eq:implicit-dynamics} has a unique global strong solution. More
precisely, on every bounded interval $[0,T]$,
$$
x\in C^1([0,T];\Hc),
\qquad
v=\dot x\in W^{1,\infty}(0,T;\Hc),
$$
and the equation admits a selection
$\xi\in L^\infty(0,T;\Hc)$ with
$\xi(t)\in\partial\phi(v(t))$ almost everywhere. For this selection,
\begin{equation}
\dot\E_{\beta}(t)
+\beta\norm{\dot v(t)}^2
+\gamma\norm{v(t)}^2
+\ip{\xi(t)}{v(t)}=0
\label{eq:continuous-dissipation}
\end{equation}
for almost every $t>0$. Consequently, for every $T>0$,
\begin{align}
\label{eq:cont-integ-dissipation}
\E_\beta(T)
&+\beta\int_0^T\norm{\dot v(t)}^2\,dt
+\gamma\int_0^T\norm{v(t)}^2\,dt
+r\int_0^T\norm{v(t)}\,dt
\leq \E_\beta(0),
\end{align}
and
$$
\int_0^{+\infty}\norm{v(t)}\,dt
\leq \frac{\E_{\beta}(0)}{r}.
$$
When $\beta>0$, one also has
$\dot v\in L^2(0,+\infty;\Hc)$. If $\phi$ is finite and continuous on
$\Hc$, all the conclusions hold for every $v_0\in\Hc$.
\end{theorem}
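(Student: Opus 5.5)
The proof will combine a truncation argument with the Lipschitz perturbation theorem recalled above, and then extend the local solution to a global one using the energy $\E_\beta$.

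\textbf{Local existence.} Write \eqref{eq:implicit-dynamics} as $\dot z+\mathcal A(z)\ni-\mathcal B(z)$ with $z=(x,v)$. For $R>0$, let $\pi_R$ be the radial projection of $\Hc\times\Hc$ onto the closed ball of radius $R$, and set $\mathcal B_R=\mathcal B\circ\pi_R$. This map is globally Lipschitz, because $\mathcal B$ is Lipschitz on bounded sets and $\pi_R$ is nonexpansive. The cited theorem then gives a unique global strong solution $z_R$ with $\dot z_R$ and the selection $\xi$ essentially bounded on bounded intervals. As long as $\|z_R\|<R$, the function $z_R$ solves the original problem. Uniqueness follows from the monotonicity of $\partial\phi$ together with the local Lipschitz property of $\mathcal B$ and Gronwall's lemma, so solutions for different $R$ agree. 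The regularity $x\in C^1$, $v\in W^{1,\infty}$ and $\xi\in L^\infty$ is inherited directly.

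\textbf{Energy identity.} This is the key computation and the step I expect to require the most care. On an interval where the solution is defined, $v\in W^{1,\infty}$, so $y=x+\beta v$ is absolutely continuous with $\dot y=v+\beta\dot v$. Hence
\[
\frac{d}{dt}f(y)=\ip{\nabla f(y)}{v+\beta\dot v}.
\]
For $t\mapsto\phi(v(t))$ I would use the Brezis chain rule for $\phi\in\GammaZero$ along $W^{1,2}$ curves admitting an $L^2$ selection of $\partial\phi(v)$. It gives that $\phi\circ v$ is absolutely continuous with $\frac{d}{dt}\phi(v)=\ip{\xi}{\dot v}$ a.e. Substituting $\nabla f(y)=-\dot v-\gamma v-\xi$ yields
\[
\frac{d}{dt}f(y)=-\ip{\dot v+\gamma v+\xi}{v}-\beta\norm{\dot v}^2-\beta\gamma\ip{v}{\dot v}-\beta\ip{\xi}{\dot v}.
\]
The terms $-\ip{\dot v}{v}-\beta\gamma\ip{v}{\dot v}$ are exactly cancelled by $\frac{d}{dt}\frac{1+\beta\gamma}{2}\norm{v}^2$, and $-\beta\ip{\xi}{\dot v}$ is cancelled by $\frac{d}{dt}\beta\phi(v)$. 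This gives \eqref{eq:continuous-dissipation}.

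\textbf{Integrated estimate.} By the subgradient inequality at $v$ compared with $0$, and the previous lemma,
\[
\ip{\xi}{v}\ge\phi(v)-\phi(0)=\phi(v)\ge r\norm{v}.
\]
Integrating \eqref{eq:continuous-dissipation} therefore gives \eqref{eq:cont-integ-dissipation}. Since $v_0\in\Dom(\partial\phi)\subseteq\dom\phi$, the initial energy $\E_\beta(0)$ is finite.

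\textbf{Globality.} Every term of $\E_\beta$ is nonnegative, since $f\ge\inf f$ and $\phi\ge0$. So \eqref{eq:cont-integ-dissipation} gives $\norm{v(t)}^2\le 2\E_\beta(0)$ on the whole existence interval. It then follows that $\norm{x(t)}\le\norm{x_0}+t\sqrt{2\E_\beta(0)}$. Given $T$, choose $R$ larger than this bound on $[0,T]$. The truncated solution $z_R$ then never reaches the ball boundary before $T$; a continuity argument makes this precise. Hence $z_R$ is a solution on $[0,T]$, and since $T$ is arbitrary the solution is global.

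\textbf{Consequences.} Letting $T\to\infty$ in \eqref{eq:cont-integ-dissipation} and using $\E_\beta(T)\ge0$ gives $\int_0^\infty\norm{v}\,dt\le\E_\beta(0)/r$. When $\beta>0$ it also gives $\dot v\in L^2(0,\infty;\Hc)$.

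Finally, suppose $\phi$ is finite and continuous on $\Hc$. Then $\partial\phi(v)\neq\varnothing$ for every $v$, so $\Dom(\partial\phi)=\Hc$, and every $v_0\in\Hc$ is admissible.
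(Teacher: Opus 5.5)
Your proposal is correct and follows essentially the same route as the paper: you truncate $\mathcal B$ by the metric projection and apply the Lipschitz perturbation theorem, and you obtain the energy identity by testing the velocity equation with $v+\beta\dot v=\dot y$ via the two chain rules. You then use $R$-independent bounds on $v$ and $x$ to show that the truncated solution stays strictly inside the ball on $[0,T]$, and integrate using $\ip{\xi}{v}\ge\phi(v)\ge r\norm{v}$, exactly as the paper does (your velocity bound $\norm{v}^2\le 2\E_\beta(0)$ is slightly cruder than the paper's $2\E_\beta(0)/(1+\beta\gamma)$, which is immaterial).
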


\begin{proof}
Set $z=(x,v)$, $z_0=(x_0,v_0)$, and write
$$
\dot z+{\mathcal A(z)}+{\mathcal B(z)}\ni0,
\qquad z(0)=z_0.
$$
For $R>\norm{z_0}$, let $\Pi_R$ be the metric projection onto the closed
ball of radius $R$ in $\Hc\times\Hc$, and set
$\mathcal B_R=\mathcal B\circ\Pi_R$. Since $\Pi_R$ is nonexpansive and
$\mathcal B$ is Lipschitz continuous on bounded sets, $\mathcal B_R$ is
globally Lipschitz continuous. The Lipschitz perturbation theorem for
maximally monotone evolution equations therefore gives a unique global strong
solution $z_R=(x_R,v_R)$ of the truncated problem, with the stated local
regularity and an $L^\infty$ selection
$\xi_R(t)\in\partial\phi(v_R(t))$. Indeed,
$\mathcal A$ is maximally monotone because it is the Cartesian product of the
zero operator and the maximally monotone operator $\partial\phi$, and the
preceding perturbation theorem applies to $\mathcal A+\mathcal B_R$.

Let
$$
\tau_R=\inf\{t\geq0:\ \norm{z_R(t)}\geq R\}.
$$
On $[0,\tau_R)$, the truncated solution solves the original equation. Set
$y_R=x_R+\beta v_R$. Here the two chain rules are used in
their strong form. Since $y_R\in W^{1,\infty}$ and $f\in C^1$, the function
$f\circ y_R$ is absolutely continuous and its derivative is the classical
one. Moreover, if $u\in W^{1,2}(0,T;\Hc)$ and
$\eta\in L^2(0,T;\Hc)$ satisfy $\eta(t)\in\partial\phi(u(t))$ almost
everywhere, then $\phi\circ u$ is absolutely continuous and
$(\phi\circ u)'=\langle\eta,\dot u\rangle$ almost everywhere; see the convex
chain rule in \cite[Chapter~III]{Brezis1973}. Consequently,
$$
\frac{d}{dt}f(y_R)=\ip{\nabla f(y_R)}{v_R+\beta\dot v_R},
\qquad
\frac{d}{dt}\phi(v_R)=\ip{\xi_R}{\dot v_R}
$$
almost everywhere. Taking the inner product of the velocity equation with
$v_R+\beta\dot v_R$ and collecting exact derivatives yields
\eqref{eq:continuous-dissipation} on this interval. Since
$\ip{\xi_R}{v_R}\geq0$, the energy is nonincreasing. With
$$
V_0=\left(\frac{2\E_\beta(0)}{1+\beta\gamma}\right)^{1/2},
$$
one obtains
$$
\norm{v_R(t)}\leq V_0,
\qquad
\norm{x_R(t)}\leq\norm{x_0}+TV_0
$$
for $0\leq t\leq\min\{T,\tau_R\}$. These bounds are independent of $R$.
Choose
\[
R_T>\left[\bigl(\norm{x_0}+TV_0\bigr)^2+V_0^2\right]^{1/2}.
\]
Then every truncation radius $R\geq R_T$ satisfies
$\tau_R>T$, and $z_R$ solves the original equation on $[0,T]$. If
$R,S\geq R_T$, uniqueness for the globally Lipschitz truncated equations,
applied on the ball on which both truncations agree with $\mathcal B$, shows
that $z_R=z_S$ on $[0,T]$. Thus these solutions are compatible as $T$
increases and define a global strong solution of
\eqref{eq:implicit-dynamics}.

Two solutions on a bounded time interval remain in a common bounded set on
which $\mathcal B$ is Lipschitz continuous. Monotonicity of $\mathcal A$
and Gronwall's inequality give uniqueness. The regularity of $v$ and the
identity $x(t)=x_0+\int_0^t v(s)\,ds$ give the remaining regularity
assertions.

It remains to integrate the energy identity. The subgradient inequality at
zero and $r\B\subseteq\partial\phi(0)$ imply
$$
\ip{\xi(t)}{v(t)}\geq\phi(v(t))\geq r\norm{v(t)}.
$$
Integration of \eqref{eq:continuous-dissipation} gives
\eqref{eq:cont-integ-dissipation}. The energy is nonnegative, so
letting $T\to+\infty$ proves the finite-length estimate and, when
$\beta>0$, the $L^2$ estimate on $\dot v$.

Finally, if $\phi$ is finite and continuous on $\Hc$, then
$\Dom(\partial\phi)=\Hc$, and the restriction on $v_0$ disappears.
\end{proof}

The restriction on the initial velocity is relevant for an extended-valued
friction potential.

\begin{remark}
For a general $\phi\in\GammaZero$, the condition
$v_0\in\Dom(\partial\phi)$ is essential for the strong solution asserted
above. Initial data merely in $\overline{\dom(\phi)}$ lead to a weaker
Cauchy theory, with possible loss of regularity at $t=0$, which is not used
here.
\end{remark}

The dissipation estimate controls the acceleration only when the shift is
positive.

\begin{remark}
The energy controls the instantaneous value $\beta\phi(v(t))$, not the total
variation of $t\mapsto\phi(v(t))$. The direct acceleration estimate is
available only for $\beta>0$.
\end{remark}

Finite length gives the strong convergence of $x(t)$. We next identify
the limit without using finite-dimensional compactness.

% ============================================================================
\subsection{Limit Equilibrium and Finite-Time Stabilization}

Let $q(t)=-\nabla f(y(t)),\;
\mathcal M=\gamma\Id+\partial\phi.$
The operator $\mathcal M$ is maximal and strongly monotone. The equation can
be written as
$$
\dot v(t)+{\mathcal M(v(t))}\ni q(t).
$$
{The strong monotonicity of $\mathcal M$ allows us to identify the
limiting equilibrium without finite-dimensional compactness and, when
$\beta=0$, without an estimate on the acceleration.}

\begin{theorem}
\label{thm:terminal-balance}
Under Assumption~\ref{ass:basic}, every solution of
\eqref{eq:implicit-dynamics} has finite length and there exists
$x_\infty\in\Hc$ such that
\begin{equation}
x(t)\to x_\infty,
\qquad
v(t)\to0,
\qquad
y(t)\to x_\infty.
\end{equation}
Moreover,
\begin{equation}
\label{eq:cont-term-bal}
-\nabla f(x_\infty)\in\partial\phi(0),
\end{equation}
and
$$
\norm{x(t)-x_\infty}
\leq\int_t^{+\infty}\norm{v(s)}\,ds.
$$
\end{theorem}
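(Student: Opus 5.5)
The plan is to get convergence of $x$ from finite length, then $v(t)\to0$ from a one-sided differential inequality for $\norm{v}^2$, and finally the equilibrium condition from maximal monotonicity of $\partial\phi$ tested against arbitrary points of its graph.

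\textbf{Step 1: convergence of $x$ and the tail estimate.} By Theorem~\ref{thm:continuous-energy}, $\int_0^{+\infty}\norm{v}\,dt\le \E_\beta(0)/r<+\infty$. Since $x(t)-x(s)=\int_s^t v$, the trajectory is Cauchy as $t\to+\infty$. Hence $x(t)\to x_\infty$ strongly. Letting $t'\to+\infty$ in $\norm{x(t')-x(t)}\le\int_t^{t'}\norm{v}$ gives the tail estimate. The energy bound gives $\norm{v(t)}\le V_0$, and finite length gives $\norm{x(t)}\le\norm{x_0}+\E_\beta(0)/r$. So $y=x+\beta v$ stays in a fixed bounded set. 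Because $\nabla f$ is Lipschitz on bounded sets, $q=-\nabla f(y)$ satisfies $\norm{q(t)}\le C$ for all $t\ge0$.

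\textbf{Step 2: $v(t)\to0$.} This is the step I expect to be the main obstacle, since for $\beta=0$ there is no bound on $\dot v$. Instead of an acceleration bound I will use the one-sided inequality
$$
\tfrac{d}{dt}\tfrac12\norm{v}^2=\ip{q-\gamma v-\xi}{v}\le -\gamma\norm{v}^2-r\norm{v}+C\norm{v}\le C\norm{v}
\quad\text{a.e.},
$$
which relies on $\ip{\xi}{v}\ge\phi(v)\ge r\norm{v}$. Since $\norm{v}^2$ is absolutely continuous, integration gives $\norm{v(t)}^2\le\norm{v(s)}^2+2C\int_s^{+\infty}\norm{v}$ for all $t\ge s$. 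Because $\norm{v}\in L^1(0,+\infty)$, we have $\liminf_{s\to+\infty}\norm{v(s)}=0$. Choosing $s$ large with $\norm{v(s)}$ small and with a small tail integral yields $\limsup_{t\to+\infty}\norm{v(t)}=0$. Consequently $y(t)\to x_\infty$, and by continuity of $\nabla f$, $q(t)\to q_\infty:=-\nabla f(x_\infty)$.

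\textbf{Step 3: the equilibrium condition \eqref{eq:cont-term-bal}.} Fix any $(u,\zeta)$ in the graph of $\partial\phi$. Monotonicity of $\partial\phi$ gives $\ip{\xi-\zeta}{v-u}\ge0$, hence a.e.
$$
\tfrac{d}{dt}\tfrac12\norm{v-u}^2=\ip{q-\gamma v-\xi}{v-u}\le\ip{q-\gamma u-\zeta}{v-u}-\gamma\norm{v-u}^2 .
$$
Integrating over $[t,t+1]$ and letting $t\to+\infty$, the left side tends to $\tfrac12\norm{u}^2-\tfrac12\norm{u}^2=0$, because $v\to0$. The integrand on the right converges uniformly to $\ip{q_\infty-\gamma u-\zeta}{-u}-\gamma\norm{u}^2=\ip{q_\infty-\zeta}{0-u}$. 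This gives $\ip{q_\infty-\zeta}{0-u}\ge0$ for every $(u,\zeta)\in\operatorname{gph}\partial\phi$. Maximal monotonicity of $\partial\phi$ then forces $q_\infty\in\partial\phi(0)$, which is \eqref{eq:cont-term-bal}.

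No compactness is used at any step, and strong monotonicity of $\mathcal M$ is only a convenience: plain maximal monotonicity of $\partial\phi$ suffices once Step~2 is in hand.
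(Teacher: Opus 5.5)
Your proof is correct, and it takes a genuinely different route from the paper's. The paper splits into cases. For $\beta>0$ it gets $v(t)\to0$ from $\dot v\in L^2(0,+\infty;\Hc)$: this makes $v$ uniformly continuous, and $v$ is also integrable. For $\beta=0$, where no acceleration bound is available, it uses that $\mathcal M=\gamma\Id+\partial\phi$ is $\gamma$-strongly monotone and surjective and sets $w=\mathcal M^{-1}(p)$ with $p=-\nabla f(x_\infty)$. After regularizing the norm at the zeros of $v-w$, it derives $\frac{d}{dt}\norm{v-w}+\gamma\norm{v-w}\le\norm{q-p}$ and concludes $v(t)\to w$. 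Integrability of $v$ then forces $w=0$, hence $p\in\mathcal M(0)=\partial\phi(0)$; the same argument is rerun for $\beta>0$ to obtain the equilibrium condition.

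You instead treat all $\beta\geq0$ at once. Your Step~2 is a one-sided Barbalat argument that needs only $\ip{\xi}{v}\geq0$, the global bound on $q$ supplied by the energy and finite length, and $\norm{v}\in L^1$. Your Step~3 identifies the limit by a Minty-type argument: you average the monotonicity inequality against arbitrary graph points $(u,\zeta)$ over unit time windows, which uses only maximal monotonicity of $\partial\phi$.

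Your route is more elementary. It has no case distinction, no inverse of $\mathcal M$, no regularization of the norm, and no use of $\dot v\in L^2$. Its two new steps also never use $\gamma>0$, so they would survive in a purely dry-friction regime where $\mathcal M$ is not strongly monotone. The paper's route gives something in return: a quantitative variation-of-constants estimate tying $\norm{v(t)-w}$ to $\norm{q(s)-p}$. It identifies the limit of $v$ as $\mathcal M^{-1}(p)$ before knowing that this limit vanishes. Its scalar inequality for a velocity norm, obtained from monotonicity, is also of the same type as the one driving the finite-time stabilization proof of Theorem~\ref{thm:finite-time-stabilization}.
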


\begin{proof}
Theorem~\ref{thm:continuous-energy} gives $v\in L^1(0,+\infty;\Hc)$.
Hence $x(t)$ is a Cauchy curve and converges strongly to a point $x_\infty$.
The displayed tail estimate follows by integrating $\dot x=v$.

Assume first that $\beta>0$. Then $\dot v\in L^2(0,+\infty;\Hc)$ and
$$
\norm{v(t)-v(s)}
\leq |t-s|^{1/2}\norm{\dot v}_{L^2(0,+\infty;\Hc)}.
$$
Thus $v$ is uniformly continuous. Its $L^1$ integrability implies
$v(t)\to0$, and therefore $y(t)\to x_\infty$.

It remains to handle $\beta=0$ without using $\dot v\in L^2$. In this case
$y=x$, so
$$
q(t)=-\nabla f(x(t))\longrightarrow
p:=-\nabla f(x_\infty).
$$
The operator $\mathcal M=\gamma\Id+\partial\phi$ is maximal
monotone by the full-domain sum theorem, and it is $\gamma$-strongly
monotone. Since $0\in\partial\phi(0)$, every
$m=\gamma v+\xi\in{\mathcal M(v)}$ satisfies
\[
\langle m,v\rangle
=\gamma\norm{v}^{2}+\langle\xi,v\rangle
\geq\gamma\norm{v}^{2}.
\]
Thus $\mathcal M$ is coercive and hence surjective. Moreover, strong
monotonicity gives
\[
\norm{{\mathcal M^{-1}(p_1)}-{\mathcal M^{-1}(p_2)}}
\leq\frac1\gamma\norm{p_1-p_2},
\]
so its inverse is single-valued and everywhere defined; see
\cite[Chapter~II]{Barbu2010}. Set
$w={\mathcal M^{-1}(p)}$. Strong monotonicity, applied to
$q(t)-\dot v(t)\in{\mathcal M(v(t))}$ and $p\in{\mathcal M(w)}$, yields
$$
\frac12\frac{d}{dt}\norm{v(t)-w}^2
+\gamma\norm{v(t)-w}^2
\leq\ip{q(t)-p}{v(t)-w}.
$$
To justify division at the zeros of $v-w$, set
$\rho_\delta(t)=(\norm{v(t)-w}^{2}+\delta^{2})^{1/2}$. The preceding
inequality implies, for almost every $t$,
\[
\dot\rho_\delta(t)+\gamma
\frac{\norm{v(t)-w}^{2}}{\rho_\delta(t)}
\leq\norm{q(t)-p}\,
\frac{\norm{v(t)-w}}{\rho_\delta(t)}.
\]
Integration on compact intervals and passage to the limit
$\delta\downarrow0$ therefore give, in the distributional and hence
almost-everywhere sense,
$$
\frac{d}{dt}\norm{v(t)-w}
+\gamma\norm{v(t)-w}
\leq\norm{q(t)-p}.
$$
Consequently, for $t\geq t_0$,
\[
\norm{v(t)-w}\leq e^{-\gamma(t-t_0)}\norm{v(t_0)-w}
+\int_{t_0}^{t}e^{-\gamma(t-s)}\norm{q(s)-p}\,ds.
\]
The integral tends to zero: split it at a time after which
$\norm{q(s)-p}$ is uniformly small, and use exponential decay on the
remaining compact part. Thus $v(t)\to w$. Since
$v\in L^1(0,+\infty;\Hc)$, necessarily $w=0$. Thus $v(t)\to0$ and
$p\in{\mathcal M(0)}=\partial\phi(0)$.

For $\beta>0$, the convergence already proved gives
$q(t)\to p$. Repeating the same strong-monotonicity argument shows that
$v(t)\to{\mathcal M^{-1}(p)}$. Since $v(t)\to0$, one again obtains
$p\in\partial\phi(0)$. This proves \eqref{eq:cont-term-bal}
in both cases.
\end{proof}

The condition used for finite-time stabilization is an inclusion of a
ball in $\partial\phi(0)$. The distance to the boundary can be used only
after membership in this set has been established.

\begin{proposition}
\label{prop:exact-sticking}
Let $T\geq0$. The following assertions are equivalent:
\begin{enumerate}[label=\textup{(\roman*)},leftmargin=2.5em]
\item the solution is constant on $[T,+\infty)$;
\item $v(T)=0$ and $-\nabla f(x(T))\in\Cfr$.
\end{enumerate}
\end{proposition}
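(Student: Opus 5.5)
We prove the two implications separately. In both directions the key fact is that, whenever $v\equiv0$ on an interval, the shifted point $y=x+\beta v$ coincides with $x$, so the implicit force $\nabla f(x+\beta\dot x)$ reduces to $\nabla f(x)$.

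\emph{(i)$\Rightarrow$(ii).} Assume $x(t)=x(T)$ for all $t\geq T$. Then $v=\dot x=0$ on $[T,+\infty)$, and by continuity of $v$ (Theorem~\ref{thm:well-posedness} gives $v\in W^{1,\infty}$ on bounded intervals) we get $v(T)=0$. Moreover $\dot v=0$ almost everywhere on $(T,+\infty)$ and $y(t)=x(T)$ there. The equation then gives, for almost every $t>T$,
\[
\xi(t)=-\nabla f(x(T))\quad\text{with}\quad \xi(t)\in\partial\phi(0).
\]
Such times exist, so $-\nabla f(x(T))\in\Cfr$. This direction is routine.

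\emph{(ii)$\Rightarrow$(i).} Assume $v(T)=0$ and $-\nabla f(x(T))\in\Cfr$. Define a candidate on $[T,+\infty)$ by
\[
\tilde x(t)=x(T),\qquad \tilde v(t)=0,\qquad \tilde\xi(t)=-\nabla f(x(T)).
\]
This is a strong solution starting at $(x(T),0)$: indeed $\tilde\xi(t)\in\partial\phi(0)$, and
\[
\dot{\tilde v}+\gamma\tilde v+\tilde\xi+\nabla f(\tilde x+\beta\tilde v)=0-0-\nabla f(x(T))+\nabla f(x(T))=0 .
\]
Next, the restriction of the original solution to $[T,+\infty)$ is also a strong solution with the same initial data at time $T$. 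Note that $0\in\Dom(\partial\phi)$ because $r\B\subseteq\partial\phi(0)$, so the initial condition is admissible. By the uniqueness part of Theorem~\ref{thm:well-posedness}, applied after a time translation (the system is autonomous), the two solutions coincide. Hence $x\equiv x(T)$ on $[T,+\infty)$.

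The only point needing care is the uniqueness step. Theorem~\ref{thm:well-posedness} states uniqueness for solutions issued from time $0$, so we must invoke it for the shifted Cauchy problem with data $(x(T),0)$. If one prefers not to rely on translation invariance, the uniqueness argument can be repeated directly. Both solutions stay in a bounded set on $[T,T+S]$, where $\mathcal B$ is $L$-Lipschitz. Monotonicity of $\mathcal A$ then gives
\[
\tfrac{d}{dt}\norm{z-\tilde z}^2\leq 2L\norm{z-\tilde z}^2,
\]
and Gronwall's inequality with zero initial difference yields $z=\tilde z$ on every $[T,T+S]$.
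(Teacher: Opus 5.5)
Your proof is correct and follows the same route as the paper. Necessity comes from inserting the constant trajectory into the inclusion, and sufficiency from checking that the constant pair $(x(T),0)$ solves the Cauchy problem from time $T$ and then invoking the uniqueness of Theorem~\ref{thm:well-posedness}; your remarks on time translation and the direct Gronwall argument simply make explicit what the paper leaves implicit.
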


\begin{proof}
Necessity follows by inserting the constant trajectory into the inclusion
and using the closedness of $\Cfr$. Conversely, the constant pair
$(x(T),0)$ solves the Cauchy problem starting at time $T$ whenever the
displayed balance holds. Uniqueness in Theorem~\ref{thm:well-posedness}
makes it coincide with the original trajectory.
\end{proof}

We use the following uniform interior condition on the driving force.
\begin{assumption}
\label{ass:eventual-margin}
There exist $T_0\ge0$ and $\varepsilon>0$ such that
$$
q(t)+\varepsilon\B\subseteq\Cfr,
\qquad\text{for every }t\ge T_0.
$$
\end{assumption}

This condition gives a scalar decay inequality for the speed and an
upper bound for the stopping time.

\begin{theorem}
\label{thm:finite-time-stabilization}
Under Assumption~\ref{ass:eventual-margin}, the trajectory becomes
stationary in finite time. More precisely, for almost every $t\geq T_0$ such that
$v(t)\ne0$,
\begin{equation}
\frac{d}{dt}\norm{v(t)}+\gamma\norm{v(t)}+\varepsilon\le0.
\label{eq:continuous-speed-decay}
\end{equation}
\par\smallskip\noindent
Consequently,
\begin{equation}
\Tstop
\le T_0+\frac{1}{\gamma}
\log\left(1+\frac{\gamma\norm{v(T_0)}}{\varepsilon}\right).
\label{eq:continuous-stopping-bound}
\end{equation}
\par\smallskip\noindent
Furthermore, finite-time stabilization follows from the strict
interior condition
\begin{equation}
\label{eq:strict-terminal-condition}
-\nabla f(x_\infty)\in\interior(\partial\phi(0)).
\end{equation}
\end{theorem}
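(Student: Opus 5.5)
The plan is to take the velocity equation $\dot v+\gamma v+\xi=q$ with $\xi(t)\in\partial\phi(v(t))$, and pair it with $v(t)/\norm{v(t)}$ at points where $v(t)\ne0$.

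\textbf{Step 1: the speed inequality.} Since $v\in W^{1,\infty}$ on bounded intervals, $t\mapsto\norm{v(t)}$ is locally absolutely continuous. At almost every $t$ with $v(t)\neq0$,
\[
\frac{d}{dt}\norm{v(t)}=\ip{\dot v(t)}{v(t)}/\norm{v(t)}.
\]
Pairing the equation with $v/\norm{v}$ gives
\[
\frac{d}{dt}\norm{v}+\gamma\norm{v}+\frac{\ip{\xi-q}{v}}{\norm{v}}=0.
\]
The key input is monotonicity of $\partial\phi$. For $t\ge T_0$, Assumption~\ref{ass:eventual-margin} gives $p:=q(t)+\varepsilon v(t)/\norm{v(t)}\in\Cfr=\partial\phi(0)$. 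Monotonicity between the pairs $(v,\xi)$ and $(0,p)$ yields $\ip{\xi-p}{v}\ge0$, that is,
\[
\ip{\xi-q}{v}\ge\varepsilon\norm{v}.
\]
Substituting this gives \eqref{eq:continuous-speed-decay}.

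\textbf{Step 2: the stopping bound.} First, the zero set is absorbing after $T_0$. If $v(t_1)=0$ for some $t_1\ge T_0$, then $q(t_1)\in\Cfr$, because $\varepsilon\B\ni0$. Hence $-\nabla f(x(t_1))\in\Cfr$, since $y(t_1)=x(t_1)$ when $v(t_1)=0$. Proposition~\ref{prop:exact-sticking} then shows the solution is constant on $[t_1,+\infty)$. It therefore suffices to bound the first zero $t_1\ge T_0$ of $v$. On $[T_0,t_1)$, set $u=\norm{v}$, so $\dot u\le-\gamma u-\varepsilon$ almost everywhere. Gronwall's inequality (via $\frac{d}{dt}[e^{\gamma t}(u+\varepsilon/\gamma)]\le0$) gives
\[
u(t)+\varepsilon/\gamma\le e^{-\gamma(t-T_0)}\bigl(u(T_0)+\varepsilon/\gamma\bigr).
\]
Since $u\ge0$, this forces $t_1\le T_0+\gamma^{-1}\log\bigl(1+\gamma u(T_0)/\varepsilon\bigr)$, which is \eqref{eq:continuous-stopping-bound}. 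If $v(T_0)=0$, the same absorption argument gives $\Tstop\le T_0$ directly.

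\textbf{Step 3: the strict interior condition.} Assume \eqref{eq:strict-terminal-condition}. Then there is $\varepsilon_0>0$ with $-\nabla f(x_\infty)+2\varepsilon_0\B\subseteq\Cfr$. By Theorem~\ref{thm:terminal-balance}, $y(t)\to x_\infty$, and $\nabla f$ is continuous, so $q(t)\to-\nabla f(x_\infty)$. Choose $T_0$ such that $\norm{q(t)+\nabla f(x_\infty)}\le\varepsilon_0$ for $t\ge T_0$. Then $q(t)+\varepsilon_0\B\subseteq\Cfr$ for $t\ge T_0$, so Assumption~\ref{ass:eventual-margin} holds and Step 2 applies.

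\textbf{Main obstacle.} The delicate point is Step 1, namely justifying the differentiation of $\norm{v}$ and using an a.e.\ inequality only on the set where $v\ne0$. The absorption argument in Step 2 avoids having to control what happens on the zero set.
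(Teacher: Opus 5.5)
Your proposal is correct and follows essentially the same route as the paper: you pair the velocity equation with $v/\norm{v}$, use monotonicity of $\partial\phi$ against $q+\varepsilon v/\norm{v}\in\partial\phi(0)$, integrate the scalar inequality with the factor $e^{\gamma t}$, and invoke the exact sticking criterion at the first zero, and you handle the strict interior case by the same $2\varepsilon$-ball argument. Your explicit treatment of the absorbing zero set (including $y=x$ when $v=0$) and of the case $v(T_0)=0$ only spells out details that the paper leaves implicit.
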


\begin{proof}
Fix a time such that $v(t)\ne0$, set $u=v(t)/\norm{v(t)}$, and choose the
subgradient selection $\xi(t)$. Since
$q(t)+\varepsilon u\in\partial\phi(0)$, monotonicity gives
$$
\ip{\xi(t)-q(t)}{u}\geq\varepsilon.
$$
Pairing $\dot v+\gamma v+\xi=q$ with $u$ proves
\eqref{eq:continuous-speed-decay}. Multiplication by
$e^{\gamma(t-T_0)}$ shows that the speed must vanish no later than the
right-hand side of \eqref{eq:continuous-stopping-bound}. At its first zero,
$q(t)\in\Cfr$; Proposition~\ref{prop:exact-sticking} makes the zero
permanent.

Finally, let $p_\infty=-\nabla f(x_\infty)$ and assume
\eqref{eq:strict-terminal-condition}. Choose $\varepsilon>0$ such that
$$
p_\infty+2\varepsilon\B\subseteq\Cfr.
$$
Theorem~\ref{thm:terminal-balance} gives $q(t)\to p_\infty$, hence
$q(t)+\varepsilon\B\subseteq\Cfr$ from some time $T_0$ onward. The first
part applies.
\end{proof}

The estimate depends on the time from which the interior inclusion
holds. This time is not controlled by Assumption~\ref{ass:basic}.

\begin{remark}
The strict interior condition is sufficient but not necessary. The
bound \eqref{eq:continuous-stopping-bound} applies only from the time $T_0$
at which the uniform interior inclusion begins to hold.
\end{remark}

The strict inclusion cannot in general be replaced by a non-strict
one.
\begin{proposition}
\label{prop:boundary-no-stopping}
Let $\Hc=\R$, $\beta=0$, $\gamma=2$,
$$
\phi(v)=r|v|,
\qquad
f(x)=\frac12x^2,
$$
where $r>0$. For every $A>0$, the solution starting from
$x_0=-r-A$ and $v_0=A$ is
\begin{equation}
\label{eq:sol-exp-bord}
x(t)=-r-Ae^{-t},
\qquad
v(t)=Ae^{-t}.
\end{equation}
It converges to the boundary equilibrium $x_\infty=-r$, but it never stops
in finite time.
\end{proposition}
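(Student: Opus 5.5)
The plan is to check directly that the explicit pair \eqref{eq:sol-exp-bord} is a strong solution, and then invoke the uniqueness part of Theorem~\ref{thm:well-posedness}.

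Here $\Hc=\R$, $\beta=0$, $\gamma=2$ and $\phi(v)=r|v|$. Since $\phi$ is finite and continuous, Theorem~\ref{thm:well-posedness} gives a unique global strong solution for every $(x_0,v_0)\in\R^2$. It therefore suffices to exhibit a pair $(x,v)$ of the stated regularity, together with a selection $\xi(t)\in\partial\phi(v(t))$, satisfying
\[
\dot v+2v+\xi+x=0 .
\]
Take $x(t)=-r-Ae^{-t}$ and $v(t)=Ae^{-t}$, so that $\dot x=v$, $x(0)=-r-A$ and $v(0)=A$. Because $A>0$, we have $v(t)>0$ for all $t$, so $\partial\phi(v(t))=\{r\}$ and we must take $\xi\equiv r$. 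The equation then reads
\[
-Ae^{-t}+2Ae^{-t}+r-r-Ae^{-t}=0,
\]
which holds identically. Both functions are smooth, and $\xi$ is constant, hence bounded. By uniqueness this pair is the solution.

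For the asymptotics, $x(t)\to -r=x_\infty$ and $v(t)\to0$. The terminal force is $-\nabla f(x_\infty)=r$. This lies in $\partial\phi(0)=[-r,r]$ but on its boundary, consistent with Theorem~\ref{thm:terminal-balance}, and the strict condition \eqref{eq:strict-terminal-condition} fails. Since $v(t)=Ae^{-t}\neq0$ for every $t\ge0$, the set $\{T\ge0:\ v(t)=0 \text{ for all } t\ge T\}$ is empty. Hence $\Tstop=+\infty$, and the trajectory is never constant on any interval $[T,+\infty)$.

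One can also cross-check this against Proposition~\ref{prop:exact-sticking}. The trajectory would be constant from time $T$ only if $v(T)=0$, which never happens.

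There is no real obstacle here. The only point needing care is that the subgradient selection is forced to equal $r$ because $v$ stays strictly positive. That is exactly what makes the equation linear along the trajectory, and it is why the boundary force $r$ is exactly balanced without ever triggering sticking.
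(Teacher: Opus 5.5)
Your proof is correct and follows the paper's argument. Since $v(t)=Ae^{-t}>0$, the friction selection is forced to equal $r$, direct substitution verifies the equation, uniqueness from Theorem~\ref{thm:well-posedness} identifies this pair as the solution, and $v(t)\neq0$ for all $t$ rules out a finite stopping time.
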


\begin{proof}
Since $v(t)>0$, the friction selection is constantly equal to $r$. Direct
substitution gives
$$
\dot v(t)+2v(t)+r+f'(x(t))=0.
$$
Thus \eqref{eq:sol-exp-bord} is the unique solution.
Its limiting force satisfies
$$
-f'(x_\infty)=r\in\bd([-r,r])
=\bd(\partial\phi(0)),
$$
while $v(t)>0$ for every finite $t$. In contrast, the initial pair
$(-r,0)$ is already a permanent boundary equilibrium. Hence a limiting
force on the boundary may correspond either to a trajectory that never
stops in finite time or to an equilibrium.
\end{proof}

% ============================================================================
\section{{Comparison of Implicit and Explicit Hessian-Driven Damping}}
\label{sec:finite-horizon-comparison}

For $J\in\{I,E\}$, let $(x_\beta^J,v_\beta^J)$ denote the solution of the
{implicit or explicit Hessian-driven model} with parameter $\beta$, and let $(x^0,v^0)$
be their common solution for $\beta=0$. All three trajectories start from
the same pair $(x_0,v_0)$.

The comparison requires second-order regularity and a uniform control of the
negative part of the Hessian.

\begin{assumption}
\label{ass:comparison}
The function $f$ belongs to $C^2(\Hc)$. The mapping
$\nabla^2f:\Hc\to\mathcal L(\Hc)$ is bounded on bounded sets and is
Lipschitz continuous, in the operator norm, on every bounded set. Moreover,
there exists $\kappa\geq0$
such that
$$
\ip{\nabla^2f(x)u}{u}\geq-\kappa\norm{u}^2
\qquad (x,u\in\Hc).
$$
\end{assumption}

\subsection{Finite-Horizon Estimates}

The comparison with the trajectory for $\beta=0$ is of order
$O(\beta)$. The difference between the implicit and explicit trajectories
is of order $O(\beta^2)$. The first estimate also gives the uniform bounds
used near the stopping time.

\begin{proposition}
\label{prop:uniform-small-shift}
Assume Assumptions~\ref{ass:basic} and~\ref{ass:comparison}, fix
$\bar\beta>0$ such that $\bar\beta\kappa<\gamma$, and let $T>0$. Then the
{implicit and explicit Hessian-driven models} have unique global strong solutions for
$0\leq\beta\leq\bar\beta$. Moreover, there are
constants $V_T,C_T>0$, independent of $0\leq\beta\leq\bar\beta$, such that
\begin{equation}
\label{eq:uniform-velocity-bound}
\max_{J\in\{I,E\}}\sup_{0\leq t\leq T}\norm{v_\beta^J(t)}
+\sup_{0\leq t\leq T}\norm{v^0(t)}\leq V_T
\end{equation}
and
\begin{equation}
\label{eq:suivi-ref-ord1}
\max_{J\in\{I,E\}}\sup_{0\leq t\leq T}
\left(
\norm{x_\beta^J(t)-x^0(t)}
+\norm{v_\beta^J(t)-v^0(t)}
\right)
\leq C_T\beta.
\end{equation}
\end{proposition}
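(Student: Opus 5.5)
Existence and uniqueness for the implicit model for every $\beta\ge0$ is Theorem~\ref{thm:well-posedness}. For the explicit model we rely on the same truncation scheme as in that proof. The smooth part is
$$
\mathcal B^E(x,v)=\bigl(-v,\ \gamma v+\beta\nabla^2f(x)v+\nabla f(x)\bigr),
$$
which is Lipschitz on bounded sets by Assumption~\ref{ass:comparison}. Global existence then only requires an a priori bound independent of the truncation radius. The main new ingredient is therefore a velocity bound for the explicit model that is uniform in $\beta\in[0,\bar\beta]$. We take the mechanical energy $E(t)=f(x)-\inf f+\tfrac12\norm{v}^2$. Pairing the equation with $v$ gives, almost everywhere,
$$
\dot E+\gamma\norm{v}^2+\beta\ip{\nabla^2f(x)v}{v}+\ip{\xi}{v}=0.
$$
The lower Hessian bound then yields $\dot E\le-(\gamma-\beta\kappa)\norm{v}^2\le0$, because $\beta\kappa\le\bar\beta\kappa<\gamma$. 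Hence $\norm{v^E_\beta(t)}^2\le 2(f(x_0)-\inf f)+\norm{v_0}^2$, uniformly in $\beta$ and $t$. This is exactly where the hypothesis $\bar\beta\kappa<\gamma$ is used.

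For the implicit model we use the energy bound of Theorem~\ref{thm:continuous-energy}. It gives $\norm{v^I_\beta(t)}^2\le 2\E_\beta(0)/(1+\beta\gamma)$, where
$$
\E_\beta(0)=f(x_0+\beta v_0)-\inf f+\tfrac{1+\beta\gamma}{2}\norm{v_0}^2+\beta\phi(v_0).
$$
Here $\phi(v_0)<\infty$ since $v_0\in\Dom(\partial\phi)$. The quantity $f(x_0+\beta v_0)$ is bounded for $\beta\le\bar\beta$ because $f$ is $C^1$ and therefore bounded on the segment $[x_0,x_0+\bar\beta v_0]$. The case $\beta=0$ is covered by either bound. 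Together these give \eqref{eq:uniform-velocity-bound} with a constant $V_T$, which is in fact independent of $T$. Consequently all positions lie in the ball of radius $\norm{x_0}+TV_T$, and all shifted points $x+\beta v$ lie in the ball of radius $\norm{x_0}+TV_T+\bar\beta V_T$. On this bounded set $K$ we fix a Lipschitz constant $L$ for $\nabla f$ and a bound $M$ for $\norm{\nabla^2 f}$.

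For the $O(\beta)$ estimate we compare each perturbed solution with $(x^0,v^0)$. We subtract the two velocity equations and pair with $v^J_\beta-v^0$. The friction terms have a nonnegative cross term $\ip{\xi^J-\xi^0}{v^J_\beta-v^0}\ge0$ by monotonicity of $\partial\phi$, so they can be dropped. The remaining forcing terms are estimated as follows:
$$
\norm{\nabla f(x^I+\beta v^I)-\nabla f(x^0)}\le L\norm{x^I-x^0}+L\beta V_T,
\qquad
\norm{\beta\nabla^2f(x^E)v^E}\le\beta MV_T .
$$
Set $e=\norm{x^J_\beta-x^0}^2+\norm{v^J_\beta-v^0}^2$. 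This leads to $\dot e\le c\,e+c\beta^2$, using $2ab\le a^2+b^2$ and $\tfrac{d}{dt}\norm{x^J-x^0}^2\le 2\norm{x^J-x^0}\,\norm{v^J-v^0}$. Gronwall with $e(0)=0$ gives $e(t)\le C\beta^2$ on $[0,T]$, and taking square roots yields \eqref{eq:suivi-ref-ord1}.

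The main obstacle is the uniform bound for the explicit system, because the Hessian term may be anti-dissipative. The condition $\bar\beta\kappa<\gamma$ is precisely what absorbs it. A secondary issue is that global existence for the explicit model must go through the truncation and stopping-time argument, with the bound above supplying an $R$-independent estimate. Everything else is a routine Gronwall argument using the monotonicity of $\partial\phi$.
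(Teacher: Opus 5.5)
Your proposal is correct and follows essentially the same route as the paper. You use the shifted energy of Theorem~\ref{thm:continuous-energy} for the implicit model, and the mechanical energy with the lower Hessian bound and $\bar\beta\kappa<\gamma$ for the explicit model. Both are followed by a monotonicity--Gronwall comparison with the $\beta=0$ trajectory. The only cosmetic difference is that you get global existence for the explicit model from the truncation-radius-independent bound as in Theorem~\ref{thm:well-posedness}, whereas the paper argues by continuing a maximal solution; the two arguments amount to the same thing.
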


\begin{proof}
For the implicit Hessian-driven model, the energy identity
\eqref{eq:continuous-dissipation} bounds $v_\beta^I$ uniformly because the
initial shifted energies are bounded for $0\leq\beta\leq\bar\beta$. For the
explicit model, write its phase-space formulation as
\[
\dot z+{\mathcal A(z)}+\mathcal B_\beta^E(z)\ni0,
\qquad
\mathcal B_\beta^E(x,v)
=\bigl(-v,\gamma v+\nabla f(x)+\beta\nabla^2f(x)v\bigr).
\]
Assumption~\ref{ass:comparison} implies that
$\mathcal B_\beta^E$ is Lipschitz continuous on every bounded subset of
$\Hc\times\Hc$: on such a set this follows from the boundedness and the
operator-norm Lipschitz continuity of $\nabla^2f$, together with the boundedness
of $v$. Truncation by the metric projection and the perturbation theorem
stated before Theorem~\ref{thm:well-posedness} therefore give a unique
maximal strong solution. Along this solution, its mechanical energy
$$
\E_0^E(t)=f(x_\beta^E(t))-\inf f+\frac12\norm{v_\beta^E(t)}^2
$$
satisfies
$$
\dot\E_0^E
+(\gamma-\beta\kappa)\norm{v_\beta^E}^2
+r\norm{v_\beta^E}\leq0.
$$
This gives a uniform velocity bound when $\bar\beta\kappa<\gamma$.
On any finite interval $[0,T]$ contained in its maximal
existence interval, integration of $\dot x_\beta^E=v_\beta^E$ yields a
uniform bound for $x_\beta^E$. Hence the phase point remains in a bounded set
as a finite maximal time is approached. The same local theorem, restarted
from a time sufficiently close to that endpoint with a truncation radius
larger than this bound, extends the solution beyond it, a contradiction.
The explicit solution is therefore global. If $f$ is convex, the term
$\beta\ip{\nabla^2f(x_\beta^E)v_\beta^E}{v_\beta^E}$ is nonnegative and no
coupling condition is needed. The reference estimate is the case $\beta=0$.
Integrating $\dot x=v$ then bounds all positions on $[0,T]$.

Let $L_T$ be a common Lipschitz constant of $\nabla f$ on a ball containing
these trajectories and all their shifted states.

We prove \eqref{eq:suivi-ref-ord1} for IH; the EH proof is
identical. Set $X=x_\beta^I-x^0$ and $V=v_\beta^I-v^0$, and choose the
subgradient selections in the two equations. Monotonicity of
$\partial\phi$ and
$$
\norm{\nabla f(x_\beta^I+\beta v_\beta^I)-\nabla f(x^0)}
\leq L_T\norm{X}+\beta L_T V_T
$$
give, after pairing the velocity equation with $V$ and $\dot X=V$ with
$X$,
$$
\frac{d}{dt}\bigl(\norm{X}^2+\norm{V}^2\bigr)
\leq c_T\bigl(\norm{X}^2+\norm{V}^2\bigr)+c_T\beta^2.
$$
The initial difference is zero, so Gronwall's inequality proves the IH
estimate. For EH, use instead
$$
\norm{\nabla f(x_\beta^E)-\nabla f(x^0)
+\beta\nabla^2f(x_\beta^E)v_\beta^E}
\leq L_T\norm{X}+\beta L_T V_T.
$$
\end{proof}

Taylor's formula gives the difference between the shifted-gradient
force and the explicit Hessian-driven force. Define
$$
R_\beta(x,v)
=\nabla f(x+\beta v)-\nabla f(x)-\beta\nabla^2f(x)v.
$$
Under Assumption~\ref{ass:comparison},
\begin{equation}
\nabla f(x+\beta v)
=\nabla f(x)+\beta\nabla^2f(x)v+R_\beta(x,v),
\end{equation}
where
$$
R_\beta(x,v)
=\beta\int_0^1
\bigl(\nabla^2f(x+s\beta v)-\nabla^2f(x)\bigr)v\,ds.
$$
In particular, continuity of the Hessian gives
$\norm{R_\beta(x(t),v(t))}=o(\norm{v(t)})$ along every bounded trajectory
for which $v(t)\to0$. The Lipschitz continuity of the Hessian gives the
uniform second-order estimate needed below.

\begin{lemma}
\label{lem:taylor-defect}
Under Assumption~\ref{ass:comparison}, let $B\subset\Hc$ be a bounded convex
set on which the Hessian has Lipschitz constant $M_B$. If
$x,x+\beta v\in B$, then
\begin{equation}
\label{eq:taylor-defect}
\norm{R_\beta(x,v)}
\leq \frac{M_B}{2}\beta^2\norm{v}^2.
\end{equation}
\end{lemma}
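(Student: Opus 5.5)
The plan is to work directly from the integral representation of the remainder stated just before the lemma,
$$
R_\beta(x,v)=\beta\int_0^1\bigl(\nabla^2f(x+s\beta v)-\nabla^2f(x)\bigr)v\,ds .
$$
This formula follows from the fundamental theorem of calculus applied to the $C^1$ map $s\mapsto\nabla f(x+s\beta v)$ on $[0,1]$. Its derivative is $\beta\nabla^2f(x+s\beta v)v$, which is continuous because $f\in C^2(\Hc)$ by Assumption~\ref{ass:comparison}. The Bochner integral is therefore well defined in $\Hc$, and subtracting $\beta\nabla^2f(x)v=\beta\int_0^1\nabla^2f(x)v\,ds$ gives the display.

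The key geometric point is that the whole segment stays in $B$. Since $B$ is convex and both endpoints $x$ and $x+\beta v$ lie in $B$, every point $x+s\beta v=(1-s)x+s(x+\beta v)$ with $s\in[0,1]$ lies in $B$. Hence the Lipschitz bound for the Hessian on $B$ applies, in the operator norm, to each pair $(x+s\beta v,x)$:
$$
\norm{\nabla^2f(x+s\beta v)-\nabla^2f(x)}_{\mathcal L(\Hc)}\leq M_B\,s\beta\norm{v}.
$$
Note that $x$ itself is one of the two endpoints, so it is automatically in $B$.

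Next I bound the norm of the integral by the integral of the norm. For each $s$, the operator estimate above gives
$$
\norm{\bigl(\nabla^2f(x+s\beta v)-\nabla^2f(x)\bigr)v}\leq M_B s\beta\norm{v}^2.
$$
It follows that
$$
\norm{R_\beta(x,v)}\leq\beta\int_0^1 M_B s\beta\norm{v}^2\,ds=\frac{M_B}{2}\beta^2\norm{v}^2,
$$
which is \eqref{eq:taylor-defect}. The case $\beta=0$ is trivial, since then $R_0\equiv0$.

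The only step needing care is the one that justifies the integral formula. One can avoid Bochner integrals entirely by testing against an arbitrary unit vector $e$: apply the scalar fundamental theorem of calculus to $s\mapsto\ip{\nabla f(x+s\beta v)}{e}$ and take the supremum over $e$. This gives the same bound, so I do not expect a genuine obstacle.
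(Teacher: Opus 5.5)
Your proposal is correct and follows the paper's proof: the paper likewise bounds the integrand of $R_\beta(x,v)=\beta\int_0^1(\nabla^2f(x+s\beta v)-\nabla^2f(x))v\,ds$ by $M_B s\beta\norm{v}^2$ and integrates over $[0,1]$. Your use of the convexity of $B$ to keep the segment from $x$ to $x+\beta v$ inside $B$, and your justification of the integral formula, spell out steps the paper leaves implicit.
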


\begin{proof}
Estimate the integrand in the preceding representation by
$M_Bs\beta\norm{v}^2$ and integrate over $[0,1]$.
\end{proof}

This estimate gives the following comparison of the trajectories on a
fixed time interval.

\begin{theorem}
\label{thm:finite-horizon-shadowing}
Under the assumptions of Proposition~\ref{prop:uniform-small-shift}, for
every $T>0$ there exists a constant $C_T>0$, independent of
$\beta\in[0,\bar\beta]$, such that
\begin{equation}
\label{eq:finite-horizon-shadowing}
\sup_{0\leq t\leq T}
\left(
\norm{x_\beta^I(t)-x_\beta^E(t)}
+\norm{v_\beta^I(t)-v_\beta^E(t)}
\right)
\leq C_T\beta^2.
\end{equation}
\end{theorem}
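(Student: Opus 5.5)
The plan is to compare the two trajectories directly through an energy-type Gronwall argument, in the same way as in the proof of Proposition~\ref{prop:uniform-small-shift}. The difference is that the forcing mismatch will now be controlled by the Taylor defect of Lemma~\ref{lem:taylor-defect}, which is of order $\beta^2$, rather than by the crude $O(\beta)$ bound.

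\textbf{Uniform bounds and a common ball.} Fix $T>0$. By Proposition~\ref{prop:uniform-small-shift}, both $v_\beta^I$ and $v_\beta^E$ are bounded by $V_T$ on $[0,T]$, uniformly in $\beta\in[0,\bar\beta]$. Integrating $\dot x=v$ then bounds the positions. Hence there is a closed ball $B$ of radius independent of $\beta$ containing the following points for all $t\in[0,T]$:
\[
x_\beta^I(t),\qquad x_\beta^E(t),\qquad x_\beta^I(t)+s\beta v_\beta^I(t)\quad (s\in[0,1]).
\]
The ball $B$ is convex and bounded. On it, $\nabla f$ is Lipschitz with some constant $L_T$, $\nabla^2 f$ is bounded by some $H_T$, and $\nabla^2f$ is Lipschitz with constant $M_B$.

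\textbf{Splitting the force difference.} Set $X=x_\beta^I-x_\beta^E$ and $V=v_\beta^I-v_\beta^E$, with subgradient selections $\xi^I,\xi^E$. Write the implicit force as $\nabla f(x^I)+\beta\nabla^2f(x^I)v^I+R_\beta(x^I,v^I)$. The force difference then splits as
\[
\bigl[\nabla f(x^I)-\nabla f(x^E)\bigr]
+\beta\bigl[\nabla^2f(x^I)v^I-\nabla^2f(x^E)v^E\bigr]
+R_\beta(x^I,v^I).
\]
The three terms are estimated as follows.
\begin{enumerate}[label=\textup{(\alph*)},leftmargin=2.5em]
\item The first term is bounded by $L_T\norm{X}$.
\item For the second term, write $\nabla^2f(x^I)v^I-\nabla^2f(x^E)v^E=(\nabla^2f(x^I)-\nabla^2f(x^E))v^I+\nabla^2f(x^E)V$. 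This gives the bound $\bar\beta(M_BV_T\norm{X}+H_T\norm{V})$.
\item The remainder is bounded by Lemma~\ref{lem:taylor-defect}: $\norm{R_\beta}\le \tfrac{M_B}{2}\beta^2V_T^2$.
\end{enumerate}

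\textbf{Gronwall step.} Subtract the two velocity equations and pair the result with $V$. The friction terms satisfy $\ip{\xi^I-\xi^E}{V}\ge0$ by monotonicity of $\partial\phi$, so they can be discarded, and the viscous term $\gamma\norm{V}^2$ is nonnegative. Pair $\dot X=V$ with $X$. Applying Young's inequality to the cross terms gives
\[
\frac{d}{dt}\bigl(\norm{X}^2+\norm{V}^2\bigr)\le c_T\bigl(\norm{X}^2+\norm{V}^2\bigr)+c_T\beta^4,
\]
with $c_T$ independent of $\beta$. This inequality holds almost everywhere, and $\norm{X}^2+\norm{V}^2$ is absolutely continuous because $x,v\in W^{1,\infty}(0,T;\Hc)$. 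Since $X(0)=V(0)=0$, Gronwall's inequality yields $\norm{X}^2+\norm{V}^2\le c_T Te^{c_TT}\beta^4$. Taking square roots gives \eqref{eq:finite-horizon-shadowing}.

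\textbf{Expected obstacle.} The delicate point is the first step: all the Lipschitz constants, together with the convexity of the set on which Lemma~\ref{lem:taylor-defect} is applied, must be chosen independent of $\beta$. This is why the uniform velocity bound \eqref{eq:uniform-velocity-bound} is needed; it is exactly the reason for the restriction $\bar\beta\kappa<\gamma$. With that uniformity in place, the remaining steps are routine.
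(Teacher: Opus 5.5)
Your proposal is correct and follows essentially the same route as the paper's proof. Both rewrite the implicit force via the Taylor remainder $R_\beta$ and bound that remainder by $O(\beta^2)$ through Lemma~\ref{lem:taylor-defect} and the uniform velocity bound, split the remaining smooth-force difference by the Hessian bound and Hessian Lipschitz constants on a common ball, and close with monotonicity of $\partial\phi$ and Gronwall's inequality from zero initial data.
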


\begin{proof}
Set $X=x_\beta^I-x_\beta^E$ and $V=v_\beta^I-v_\beta^E$. Rewrite the IH
force as
$$
\nabla f(x_\beta^I)+\beta\nabla^2f(x_\beta^I)v_\beta^I
+R_\beta(x_\beta^I,v_\beta^I).
$$
On $[0,T]$, Proposition~\ref{prop:uniform-small-shift} and
Lemma~\ref{lem:taylor-defect} give
$$
\norm{R_\beta(x_\beta^I,v_\beta^I)}
\leq \frac{M_T}{2}V_T^2\beta^2,
$$
where $M_T$ is a Hessian Lipschitz constant on a ball containing all relevant
states. If $L_T$ denotes a bound for the Hessian on the same ball, then
the remaining smooth-force difference satisfies
\begin{align*}
&\norm{\nabla f(x_\beta^I)-\nabla f(x_\beta^E)
+\beta\nabla^2f(x_\beta^I)v_\beta^I
-\beta\nabla^2f(x_\beta^E)v_\beta^E}\\
&\hspace{4em}\leq
(L_T+\bar\beta M_T V_T)\norm{X}
+\bar\beta L_T\norm{V}.
\end{align*}
Subtracting the equations, using monotonicity of $\partial\phi$, and pairing
with $(X,V)$ therefore yields
$$
\frac{d}{dt}\bigl(\norm{X}^2+\norm{V}^2\bigr)
\leq K_T\bigl(\norm{X}^2+\norm{V}^2\bigr)+C_T\beta^4.
$$
Since $X(0)=V(0)=0$, Gronwall's inequality proves
\eqref{eq:finite-horizon-shadowing}.
\end{proof}
The constant obtained in the proof depends on $T$ through a Gronwall
estimate. Thus Theorem~\ref{thm:finite-horizon-shadowing} gives a comparison
on bounded time intervals. Its extension to $[0,+\infty[$ requires the
additional stopping condition introduced below.
% ============================================================================
\subsection{{Comparison of Permanent Stopping Times}}
\label{sec:stopping-shadowing}

Set $\Cfr=\partial\phi(0)$ and define the total driving forces
\begin{equation}
\label{eq:two-driving-forces}
q_\beta^I(t)=-\nabla f(x_\beta^I(t)+\beta v_\beta^I(t)),
\qquad
q_\beta^E(t)=-\nabla f(x_\beta^E(t))
-\beta\nabla^2f(x_\beta^E(t))v_\beta^E(t).
\end{equation}

We assume that the trajectory for $\beta=0$ stops permanently and that
its equilibrium force satisfies a strict interior condition.

\begin{assumption}
\label{ass:strict-reference-stop}
The reference trajectory $(x^0,v^0)$ has a finite permanent stopping
time $T^0>0$. There exists $\delta>0$ such that
\begin{equation}
\label{eq:strict-reference-margin}
-\nabla f(x^0(T^0))+\delta\B\subseteq \Cfr.
\end{equation}
\end{assumption}

The following lemma compares two stopping times. It does not use
Taylor's formula and will also be used for the time discretization.

\begin{lemma}
\label{lem:event-stability}
Let $(x_j,v_j)$, $j=1,2$, solve
$$
\dot v_j+\gamma v_j+\partial\phi(v_j)\ni q_j(t),
\qquad \dot x_j=v_j,
$$
with measurable forces $q_j\in L^1(a,b;\Hc)$, and assume that their
permanent stopping times $T_j$ belong to $[a,b]$. Assume that
for some $\varepsilon>0$,
$$
q_j(t)+\varepsilon\B\subseteq \Cfr
\qquad (t\in[a,b],\ j=1,2).
$$
If $\displaystyle\sup_{a\leq t\leq b}\norm{v_1(t)-v_2(t)}\leq\eta,$
then
\begin{equation}
\label{eq:borne-arret-abs}
|T_1-T_2|\leq\frac{\eta}{\varepsilon}.
\end{equation}
\end{lemma}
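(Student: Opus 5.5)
Without loss of generality we assume $T_1\le T_2$ and bound $T_2-T_1$. If $T_1=T_2$ there is nothing to prove, so suppose $T_1<T_2$. The whole argument rests on one fact: the scalar speed decay inequality \eqref{eq:continuous-speed-decay} holds for each solution on $[a,b]$. Its proof in Theorem~\ref{thm:finite-time-stabilization} used only three ingredients: the velocity equation $\dot v_j+\gamma v_j+\xi_j=q_j$ with $\xi_j\in\partial\phi(v_j)$, the uniform margin $q_j(t)+\varepsilon\B\subseteq\Cfr$, and monotonicity of $\partial\phi$. It did not use the structure of the force. Applied to $v_2$, it gives, for almost every $t\in[a,b]$ with $v_2(t)\neq0$,
\[
\frac{d}{dt}\norm{v_2(t)}\le-\gamma\norm{v_2(t)}-\varepsilon\le-\varepsilon .
\]
We need $\norm{v_2}$ to be absolutely continuous. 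This holds because $v_2\in W^{1,1}$ on $[a,b]$, a regularity we take as part of the notion of strong solution. Where $v_2=0$ on a set of positive measure, the derivative of $\norm{v_2}$ vanishes almost everywhere there.

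\emph{Step 1: $v_2$ has no zeros on $[T_1,T_2)$.} Suppose $v_2(s)=0$ for some $s\in[a,b]$ with $s<T_2$. The margin gives $q_2(s)\in\Cfr$. Proposition~\ref{prop:exact-sticking}, or more precisely its uniqueness argument applied to the Cauchy problem with force $q_2$ restarted at $s$, should then make $v_2$ vanish on $[s,b]$, contradicting the definition of $T_2$. One caveat: for a general time-dependent force $q_2$, the constant state is a solution only if $q_2(t)\in\Cfr$ for all later $t$. This does hold on $[a,b]$ by the margin assumption. Beyond $b$, the definition of $T_2$ concerns vanishing on $[T_2,\infty)$, so the zero set of $v_2$ on $[a,b]$ is an interval ending at $b$. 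It follows that $v_2(t)\neq0$ on $[T_1,T_2)$.

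\emph{Step 2: $\norm{v_2}$ is small at $T_1$.} On $[T_1,b]$ we have $v_1=0$. The hypothesis then gives $\norm{v_2(T_1)}=\norm{v_2(T_1)-v_1(T_1)}\le\eta$.

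\emph{Step 3: integrate the decay inequality.} Integrating over $[T_1,T_2]$ yields
\[
0=\norm{v_2(T_2)}\le\norm{v_2(T_1)}-\varepsilon(T_2-T_1)\le\eta-\varepsilon(T_2-T_1).
\]
Hence $T_2-T_1\le\eta/\varepsilon$. The symmetric case $T_2\le T_1$ is identical, which proves \eqref{eq:borne-arret-abs}.

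The main obstacle is Step 1: justifying that a zero of $v_2$ before $T_2$ is impossible, that is, the permanence of sticking for a general time-dependent force. We expect the margin over all of $[a,b]$ to suffice. Indeed, the decay inequality already shows that once $v_2$ vanishes, $\norm{v_2}$ cannot become positive again. If $\norm{v_2}>0$ at some later point, then on the last interval of positivity before that point it starts from $0$ and has derivative $\le-\varepsilon<0$ almost everywhere, which is a contradiction. This monotonicity argument replaces the appeal to uniqueness and is the route we would take.
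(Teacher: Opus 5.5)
Your proof is correct and follows the paper's argument: you bound $\norm{v_2(T_1)}$ by $\eta$, rule out zeros of $v_2$ on $[T_1,T_2)$, and integrate the scalar extinction inequality $\frac{d}{dt}\norm{v_2}\le-\gamma\norm{v_2}-\varepsilon$ from $T_1$ to $T_2$. The one variation is in the no-zero step: you show that a zero at $s$ persists on $[s,b]$ by a last-zero argument using the decay inequality itself, whereas the paper checks that the zero function solves the nonautonomous inclusion with forcing $q_2$ and invokes uniqueness via monotonicity and Gronwall. Both are valid, and your choice to drop the $\gamma$ term directly, rather than pass through the paper's exponential bound, also covers $\gamma=0$.
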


\begin{proof}
Suppose $T_1<T_2$. Since $v_1(T_1)=0$, one has
$\norm{v_2(T_1)}\leq\eta$. The second velocity cannot vanish on
$[T_1,T_2)$: indeed, if $v_2(t_0)=0$ for some
$t_0\in[T_1,T_2)$, then the tube assumption gives $q_2(t)\in\Cfr$ for
every $t\in[t_0,b]$. Hence the function $\widetilde v\equiv0$ solves the
nonautonomous velocity inclusion on $[t_0,b]$. This solution is unique:
if $v$ and $\widetilde v$ have the same value at $t_0$, subtraction of
their inclusions with the same forcing $q_2$ and monotonicity of
$\partial\phi$ give
\[
\frac12\frac{d}{dt}\norm{v-\widetilde v}^{2}
+\gamma\norm{v-\widetilde v}^{2}\leq0
\quad\text{a.e. on }(t_0,b),
\]
and Gronwall's inequality gives $v=\widetilde v$. Thus $t_0$ would be a
permanent stopping time, contradicting $t_0<T_2$. For almost every
$t\in[T_1,T_2)$, monotonicity of
$\partial\phi$ gives
$$
\frac{d}{dt}\norm{v_2(t)}+\gamma\norm{v_2(t)}+\varepsilon\leq0.
$$
Integration backward from $v_2(T_2)=0$ yields
$$
\norm{v_2(T_1)}
\geq\frac{\varepsilon}{\gamma}
\bigl(e^{\gamma(T_2-T_1)}-1\bigr)
\geq\varepsilon(T_2-T_1).
$$
This proves \eqref{eq:borne-arret-abs}; the other ordering is
identical.
\end{proof}

The strict interior condition implies that the two perturbed stopping
times remain close to $T^0$.
\begin{proposition}
\label{prop:stopping-localization}
Suppose that Assumption~\ref{ass:strict-reference-stop} and the hypotheses of
Proposition~\ref{prop:uniform-small-shift} hold. Then there exist
$\tau,\varepsilon,\beta_0,C>0$ such that, for
$J\in\{I,E\}$ and $0<\beta\leq\beta_0$,
\begin{equation}
\label{eq:persistent-event-margin}
q_\beta^J(t)+\varepsilon\B\subseteq \Cfr
\qquad\text{for }t\in[T^0-\tau,T^0+\tau],
\end{equation}
the permanent stopping time $T_\beta^J$ is finite, and
\begin{equation}
\label{eq:loc-arret-ord1}
|T_\beta^J-T^0|\leq C\beta.
\end{equation}
\end{proposition}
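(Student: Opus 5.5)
The proof rests on the uniform first-order tracking estimate of Proposition~\ref{prop:uniform-small-shift} on a fixed horizon, followed by two applications of the finite-time machinery of Theorem~\ref{thm:finite-time-stabilization} and Lemma~\ref{lem:event-stability}.

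\emph{Step 1: margin \eqref{eq:persistent-event-margin}.} Fix the horizon $T=T^0+1$ and let $V_T,C_T$ be the constants of Proposition~\ref{prop:uniform-small-shift}. Let $L$ bound both the Lipschitz constant of $\nabla f$ and the Hessian on a ball containing all relevant states. Since $v^0$ is continuous with $v^0\equiv0$ on $[T^0,+\infty)$, and $x^0$ is continuous, we can choose $\tau\in(0,1]$ so small that $L\norm{x^0(t)-x^0(T^0)}\leq\delta/8$ for $|t-T^0|\leq\tau$. On $t\geq T^0$ this holds trivially, since $x^0$ is constant there. For $J=I$ we have
\[
\norm{q_\beta^I(t)+\nabla f(x^0(T^0))}\leq L\bigl(\norm{x_\beta^I-x^0}+\beta V_T\bigr)+L\norm{x^0(t)-x^0(T^0)}\leq L(C_T+V_T)\beta+\delta/8 .
\]
For $J=E$ the same bound holds with $\beta L V_T$ accounting for the Hessian term. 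Taking $\beta_0$ with $L(C_T+V_T)\beta_0\leq\delta/8$, we get $q_\beta^J(t)\in -\nabla f(x^0(T^0))+\tfrac{\delta}{4}\B$. Combined with \eqref{eq:strict-reference-margin}, this gives the inclusion with $\varepsilon=\delta/2$.

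\emph{Step 2: finiteness of $T_\beta^J$, localized in the window.} Set $a=T^0-\tau$. By Proposition~\ref{prop:uniform-small-shift}, $\norm{v_\beta^J(a)}\leq\norm{v^0(a)}+C_T\beta$. On $[a,T^0+\tau]$ the speed inequality \eqref{eq:continuous-speed-decay} holds wherever $v\neq0$; its proof only uses the margin at the given time, so it works on the window without needing Assumption~\ref{ass:eventual-margin} for all later times. Hence $v_\beta^J$ vanishes before $a+\varepsilon^{-1}\norm{v_\beta^J(a)}$ or earlier. To make this time $\leq T^0+\tau$, we need $\norm{v^0(a)}$ small relative to $\varepsilon\tau$. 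For this we apply Lemma~\ref{lem:event-stability}'s backward integration to the reference trajectory, shrinking $\tau$ and checking that $q^0$ also carries the margin on the window (Step~1 with $\beta=0$). This gives $\norm{v^0(t)}\leq\frac{\varepsilon}{\gamma}(e^{\gamma(T^0-t)}-1)$. That bound is only an upper bound by integration, which is the wrong direction for us. Instead we integrate \eqref{eq:continuous-speed-decay} forward from $a$ for the reference solution, which still gives $\norm{v^0(a)}\leq\varepsilon\tau\, e^{\gamma\tau}$-type smallness as $\tau\to0$ (continuity and $v^0(T^0)=0$). We then reduce $\beta_0$ so that $C_T\beta_0\leq\varepsilon\tau/4$ and choose $\tau$ such that $\norm{v^0(a)}\leq\varepsilon\tau/4$. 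At the first zero $t_1$ in the window, $q_\beta^J(t_1)\in\Cfr$. For IH, Proposition~\ref{prop:exact-sticking} makes the stop permanent, since $q_\beta^I(t_1)=-\nabla f(x(t_1))$ when $v=0$. For EH the same argument applies, because the Hessian term vanishes at $v=0$ and the constant pair solves EH; uniqueness then holds by Proposition~\ref{prop:uniform-small-shift}. Hence $T_\beta^J\in[a,T^0+\tau]$. Moreover, $T_\beta^J\geq a$ is not needed: if the stop happened earlier, replace $a$ by $\min(a,T_\beta^J)$. Simpler still, Lemma~\ref{lem:event-stability} only requires both stopping times to lie in $[a,b]$, and if $T_\beta^J<a$ then $v_\beta^J(a)=0$. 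In that case we lower $a$ within a slightly larger margin window, which is harmless by the same continuity.

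\emph{Step 3: rate \eqref{eq:loc-arret-ord1}.} Apply Lemma~\ref{lem:event-stability} on $[a,b]=[T^0-\tau,T^0+\tau]$ to the pairs $(x^0,v^0)$ with force $q^0=-\nabla f(x^0)$ and $(x_\beta^J,v_\beta^J)$ with force $q_\beta^J$. Both have the margin $\varepsilon$, and $\eta=C_T\beta$ by \eqref{eq:suivi-ref-ord1}. This yields $|T_\beta^J-T^0|\leq C_T\beta/\varepsilon$.

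The main obstacle is Step~2: guaranteeing that the perturbed stopping time actually falls inside the margin window. This requires balancing $\tau$ against the reference speed at $T^0-\tau$ and against $\beta_0$, and verifying permanence for EH via uniqueness.
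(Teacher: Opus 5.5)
Your Steps~1 and~3 are sound. Step~2, on which Step~3 depends, does not work.

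You run the extinction estimate \eqref{eq:continuous-speed-decay} from $a=T^0-\tau$ and want to choose $\tau$ with $\norm{v^0(T^0-\tau)}\leq\varepsilon\tau/4$. No such $\tau$ exists. The reference velocity $v^0$ cannot vanish on $[T^0-\tau,T^0)$, by exact sticking and the reference margin. So integrating the speed inequality backward from $v^0(T^0)=0$ gives the \emph{lower} bound $\norm{v^0(t)}\geq\frac{\varepsilon}{\gamma}(e^{\gamma(T^0-t)}-1)\geq\varepsilon(T^0-t)$. The inequality in your display is reversed, and hence $\norm{v^0(T^0-\tau)}\geq\varepsilon\tau$. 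If your choice were possible, the same argument with $\beta=0$ would stop the reference trajectory before $T^0-3\tau/4$, which contradicts the definition of $T^0$.

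Continuity and forward integration give only $\norm{v^0(T^0-\tau)}\leq\tau\norm{\dot v^0}_{L^\infty}$, and the actual deceleration can be much larger than the margin. For $\phi=r|\cdot|$ in one dimension, with $v^0>0$ before the stop and limiting force $p_*<0$, the deceleration is about $r+|p_*|$ while the margin is at most $r-|p_*|$. An extinction estimate started at $T^0-\tau$ therefore need not land inside the window. Your treatment of the case $T_\beta^J<a$ (``lower $a$'') is not an argument either. The margin is only known near $T^0$, so an early permanent stop has to be excluded explicitly.

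The missing idea is to start the extinction estimate at $T^0$ itself. There $v^0(T^0)=0$, hence $\norm{v_\beta^J(T^0)}\leq C_T\beta$. If the perturbed trajectory has not stopped by $T^0$, the window margin gives a zero no later than $T^0+\gamma^{-1}\log(1+\gamma C_T\beta/\varepsilon)\leq T^0+C_T\beta/\varepsilon<T^0+\tau$. Exact sticking plus uniqueness (for EH, as you note) makes this zero permanent. This gives finiteness and the upper half of \eqref{eq:loc-arret-ord1} directly.

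For the lower half, the paper first excludes stops on $[0,T^0-\tau]$. It uses the positive distance of $(v^0(t),-\nabla f(x^0(t)))$ from $\{0\}\times\Cfr$ on that compact interval. It then compares $\norm{v^0(T_\beta^J)}\leq C\beta$ with $\norm{v^0(t)}\geq 3\varepsilon(T^0-t)$.

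The lower bound you discarded also suffices on its own. If $T_\beta^J<T^0$, put $t=\max\{T_\beta^J,T^0-\tau\}$. Then $v_\beta^J(t)=0$, so $\varepsilon(T^0-t)\leq\norm{v^0(t)}\leq C_T\beta$. Once $C_T\beta<\varepsilon\tau$, this forces $t=T_\beta^J$ and $T^0-T_\beta^J\leq C_T\beta/\varepsilon$.

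Once both stopping times lie in the window, your Step~3 via Lemma~\ref{lem:event-stability} is correct, though redundant.
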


\begin{proof}
Write $p^0(t)=-\nabla f(x^0(t))$. By continuity and
\eqref{eq:strict-reference-margin}, one can choose $\tau>0$ and
$\varepsilon>0$ such that
$$
p^0(t)+3\varepsilon\B\subseteq \Cfr
\qquad (T^0-\tau\leq t\leq T^0+\tau).
$$
The reference trajectory is constant after $T^0$. {We first prove a
force estimate on the whole interval used below.} On
the fixed interval $[0,T^0+\tau]$, Proposition
\ref{prop:uniform-small-shift} and the uniform velocity bound give
$$
\norm{q_\beta^I-p^0}
\leq L\bigl(\norm{x_\beta^I-x^0}
+\beta\norm{v_\beta^I}\bigr),\qquad
\norm{q_\beta^E-p^0}
\leq L\norm{x_\beta^E-x^0}
+\beta H\norm{v_\beta^E},
$$
where $L$ and $H$ are, respectively, a Lipschitz constant of $\nabla f$ and
a bound for $\nabla^2f$ on a common bounded ball containing all the relevant
states. Consequently,
\begin{equation}
\label{eq:suivi-force-ord1}
\max_{J\in\{I,E\}}
\sup_{0\leq t\leq T^0+\tau}
\norm{q_\beta^J(t)-p^0(t)}\leq C\beta.
\end{equation}
After decreasing $\beta_0$, this estimate and the preceding interior
inclusion prove \eqref{eq:persistent-event-margin}.

We now show that neither perturbed trajectory can stop far before
$T^0$. Set
$\mathcal S=\{0\}\times\Cfr$. For every $t<T^0$, $
\bigl(v^0(t),p^0(t)\bigr)\notin\mathcal S.$
Otherwise $v^0(t)=0$ and $p^0(t)\in\Cfr$, so the exact sticking criterion
and uniqueness would make the reference trajectory constant from $t$,
contradicting the definition of $T^0$. The map
$t\mapsto(v^0(t),p^0(t))$ is continuous and $\mathcal S$ is closed. Hence
$$
d_\tau:=\min_{0\leq t\leq T^0-\tau}
\dist\bigl((v^0(t),p^0(t)),\mathcal S\bigr)>0.
$$
Combining \eqref{eq:suivi-ref-ord1} with
\eqref{eq:suivi-force-ord1}, and using the product norm on
$\Hc\times\Hc$, yields
$$
\sup_{0\leq t\leq T^0-\tau}
\norm{(v_\beta^J(t),q_\beta^J(t))-(v^0(t),p^0(t))}
\leq C\beta.
$$
For $C\beta<d_\tau/2$, the distance of
$(v_\beta^J(t),q_\beta^J(t))$ to $\mathcal S$ is at least $d_\tau/2$ on
$[0,T^0-\tau]$. At any permanent stopping time of either perturbed model,
the corresponding pair $(v_\beta^J,q_\beta^J)$ belongs to $\mathcal S$.
Therefore neither perturbed trajectory can stop on this earlier interval.

At time $T^0$, the first-order state estimate gives
$\norm{v_\beta^J(T^0)}\leq C\beta$. If the perturbed trajectory has not yet
stopped, \eqref{eq:persistent-event-margin} and the scalar extinction
inequality imply
$$
T_\beta^J-T^0
\leq\frac1\gamma
\log\left(1+\frac{\gamma C\beta}{\varepsilon}\right)
\leq C_1\beta.
$$
For small $\beta$, the right-hand side is less than $\tau$, so the
preceding argument remains inside the interval on which the common interior
inclusion holds.
At the resulting zero, the corresponding force belongs to $\Cfr$.
The constant continuation therefore solves either perturbed model, and
uniqueness makes this zero permanent.

If instead $T_\beta^J<T^0$, the exclusion just proved gives
$T_\beta^J>T^0-\tau$. On this interval the reference velocity is nonzero
before $T^0$, and the interior inclusion for the reference force gives
$$
\norm{v^0(t)}
\geq\frac{3\varepsilon}{\gamma}
\bigl(e^{\gamma(T^0-t)}-1\bigr)
\geq3\varepsilon(T^0-t).
$$
At $t=T_\beta^J$, the left-hand side is at most $C\beta$ because
$v_\beta^J(T_\beta^J)=0$. This proves the other half of
\eqref{eq:loc-arret-ord1}.

\end{proof}
We now combine the finite-time comparison with the preceding lemma.
\begin{theorem}
\label{thm:stopping-shadowing}
Under Assumptions~\ref{ass:basic}, \ref{ass:comparison}, and
\ref{ass:strict-reference-stop}, fix $\bar\beta>0$ such that
$\bar\beta\kappa<\gamma$. There exist
$\beta_0,C>0$ such that, for $0<\beta\leq\beta_0$,
\begin{equation}
\label{eq:temps-arret-ord2}
|T_\beta^I-T_\beta^E|\leq C\beta^2,
\end{equation}
\begin{equation}
\label{eq:global-shadowing}
\sup_{t\geq0}
\left(
\norm{x_\beta^I(t)-x_\beta^E(t)}
+\norm{v_\beta^I(t)-v_\beta^E(t)}
\right)
\leq C\beta^2,
\end{equation}
and
\begin{equation}
\label{eq:stopping-shadowing}
\norm{x_\beta^I(T_\beta^I)
-x_\beta^E(T_\beta^E)}\leq C\beta^2.
\end{equation}
Moreover, each individual stopping time satisfies
$|T_\beta^J-T^0|\leq C\beta$.
\end{theorem}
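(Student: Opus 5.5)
The plan is to combine Proposition~\ref{prop:stopping-localization}, Theorem~\ref{thm:finite-horizon-shadowing} and Lemma~\ref{lem:event-stability}. First, Proposition~\ref{prop:stopping-localization} provides $\tau,\varepsilon,\beta_0,C$. For $0<\beta\le\beta_0$ both perturbed stopping times $T_\beta^I,T_\beta^E$ are finite and lie in $[T^0-C\beta,T^0+C\beta]$, which proves the last assertion. Moreover, both driving forces satisfy the uniform interior inclusion \eqref{eq:persistent-event-margin} on $[a,b]:=[T^0-\tau,T^0+\tau]$. After decreasing $\beta_0$ so that $C\beta_0<\tau$, both stopping times belong to $[a,b]$.

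Next I apply Theorem~\ref{thm:finite-horizon-shadowing} with the fixed horizon $T=T^0+\tau$. This gives
\[
\sup_{0\le t\le T^0+\tau}\bigl(\norm{x_\beta^I(t)-x_\beta^E(t)}+\norm{v_\beta^I(t)-v_\beta^E(t)}\bigr)\le C_T\beta^2 .
\]
In particular the velocity gap on $[a,b]$ is at most $\eta=C_T\beta^2$. Both models have the form required by Lemma~\ref{lem:event-stability}, with forces $q_j=q_\beta^I,q_\beta^E$; these are continuous, hence in $L^1(a,b;\Hc)$. The lemma then yields $|T_\beta^I-T_\beta^E|\le C_T\beta^2/\varepsilon$, which is \eqref{eq:temps-arret-ord2}.

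For the global estimate \eqref{eq:global-shadowing}, I split at $T^0+\tau$. On $[0,T^0+\tau]$ the finite-horizon bound applies directly. For $t\ge T^0+\tau$, both trajectories are constant because $T_\beta^J\le T^0+C\beta<T^0+\tau$. Their velocities are therefore zero, and their positions equal $x_\beta^I(T^0+\tau)$ and $x_\beta^E(T^0+\tau)$, which are already $O(\beta^2)$-close. Hence the supremum over $t\ge0$ equals the supremum over the compact interval.

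Finally, for \eqref{eq:stopping-shadowing} I write $x_\beta^I(T_\beta^I)=x_\beta^I(T^0+\tau)$ and $x_\beta^E(T_\beta^E)=x_\beta^E(T^0+\tau)$, using that both trajectories are constant after stopping. The difference is then bounded by the finite-horizon estimate. Alternatively, one can bound it by the $O(\beta^2)$ position gap at a common time plus $\int_{T_\beta^I\wedge T_\beta^E}^{T_\beta^I\vee T_\beta^E}\norm{v}\le V_T\,|T_\beta^I-T_\beta^E|=O(\beta^2)$.

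The main obstacle is applying Lemma~\ref{lem:event-stability} correctly. Its hypotheses require the common interior inclusion for both forces on an interval containing both stopping times, and the velocity gap on that same interval. The delicate part is that the interval $[a,b]$ and the margin $\varepsilon$ must be independent of $\beta$, which is exactly what Proposition~\ref{prop:stopping-localization} supplies. The remaining care is choosing $\beta_0$ small enough that $C\beta_0<\tau$, so that the localization stays inside $[a,b]$ and the constants stay uniform.
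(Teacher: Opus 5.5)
Your proposal is correct and follows the paper's proof essentially step for step. Proposition~\ref{prop:stopping-localization} places both stopping times in a $\beta$-independent window with a common interior margin. Theorem~\ref{thm:finite-horizon-shadowing} on a fixed horizon supplies the $O(\beta^2)$ velocity gap that is fed into Lemma~\ref{lem:event-stability}. Constancy after stopping then extends the finite-horizon bound to $[0,+\infty[$ and to the stopping positions, and your choice $T_+=T^0+\tau$ with $C\beta_0<\tau$ is exactly the paper's horizon beyond which both trajectories have stopped.
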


\begin{proof}
Proposition~\ref{prop:stopping-localization} places both stopping times in a
fixed interval on which the same interior inclusion holds for the two
forces, with some $\varepsilon>0$. The finite-horizon estimate
\eqref{eq:finite-horizon-shadowing} gives
$$
\sup_t\norm{v_\beta^I(t)-v_\beta^E(t)}\leq C\beta^2
$$
on that interval. Lemma~\ref{lem:event-stability} proves
\eqref{eq:temps-arret-ord2}.

Choose $T_+>T^0$ so that both trajectories have stopped before $T_+$ for
all sufficiently small $\beta$. On $[0,T_+]$, Theorem
\ref{thm:finite-horizon-shadowing} gives the right-hand side of
\eqref{eq:global-shadowing}. For $t\geq T_+$, both trajectories are
constant, and their distance equals their distance at $T_+$. This proves the
global estimate. Finally,
$$
x_\beta^I(T_\beta^I)=x_\beta^I(T_+),
\qquad
x_\beta^E(T_\beta^E)=x_\beta^E(T_+),
$$
so \eqref{eq:stopping-shadowing} follows from the same estimate.
\end{proof}

\subsection{Quadratic Exactness and Sharpness}
\label{sec:quadratic-coincidence}

For $x\mapsto
f(x)=\frac12\ip{Qx}{x}-\ip{b}{x},
$
one has
$$
\nabla f(x+\beta v)=\nabla f(x)+\beta Qv.
$$
Thus the two continuous models coincide exactly on quadratic objectives.
With $v_k=(x_k-x_{k-1})/h$, their discrete forces also coincide because
$$
h\nabla f(x_k+\beta v_k)
=h\nabla f(x_k)
+\beta\bigl(\nabla f(x_k)-\nabla f(x_{k-1})\bigr).
$$

The order $O(\beta^2)$ in Theorem~\ref{thm:stopping-shadowing} is
optimal under its assumptions.
\begin{proposition}
\label{prop:quadratic-order-optimal}
There exist a scalar strongly convex $C^\infty$ objective with globally
Lipschitz gradient and Hessian, {the simplified Coulomb potential
$\phi(v)=r|v|$}, and initial data
satisfying Assumption~\ref{ass:strict-reference-stop}, for which
\begin{equation}
\label{eq:state-optimality}
\lim_{\beta\downarrow0}
\frac{v_\beta^I(t_*)-v_\beta^E(t_*)}{\beta^2}=c_*>0
\end{equation}
at some fixed pre-stopping time $t_*>0$, and
\begin{equation}
\label{eq:stopping-time-optimality}
\lim_{\beta\downarrow0}
\frac{T_\beta^I-T_\beta^E}{\beta^2}=c_{\rm stop}>0.
\end{equation}
\end{proposition}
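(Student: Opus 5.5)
The idea is to build an explicit one-dimensional example and compute the $\beta^2$ coefficient by a second-order expansion in $\beta$. Take $\Hc=\R$ and $\phi(v)=r|v|$. Let $f$ be strongly convex and $C^\infty$ with globally Lipschitz $f'$, $f''$, and with $f'''\neq0$ on the relevant region. A concrete choice is $f(x)=\frac12x^2+\varepsilon_0\psi(x)$, where $\psi$ is smooth, $\psi'''$ has fixed sign on a neighbourhood of the reference path, and $\psi$ is affine outside a compact set. Choose the initial data $x_0$ and $v_0>0$ so that the reference solution $(x^0,v^0)$ with $\beta=0$ has $v^0>0$ on $[0,T^0)$ and stops at a finite $T^0$. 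We also require $-f'(x^0(T^0))\in\interior[-r,r]$, which gives Assumption~\ref{ass:strict-reference-stop}. Since $v>0$ before stopping, the friction selection is the constant $r$ there. Consequently, on the pre-stopping interval both the IH and EH systems are smooth ODEs:
\[
\dot v+\gamma v+r+f'(x+\beta v)=0,\qquad \dot v+\gamma v+r+f'(x)+\beta f''(x)v=0 .
\]
By Proposition~\ref{prop:stopping-localization}, for small $\beta$ both perturbed velocities stay positive on $[0,T^0-\tau]$. Near stopping they are strictly positive up to $T^J_\beta$ with $T^J_\beta=T^0+O(\beta)$.

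For the state estimate \eqref{eq:state-optimality}, set $X=x^I_\beta-x^E_\beta$ and $V=v^I_\beta-v^E_\beta$. Subtracting the two smooth equations gives a linear system driven by $R_\beta=\frac{\beta^2}{2}f'''(x)v^2+O(\beta^3)$. Dividing by $\beta^2$ and using Theorem~\ref{thm:finite-horizon-shadowing} together with the $O(\beta)$ bounds of Proposition~\ref{prop:uniform-small-shift}, I will show that $(X,V)/\beta^2$ converges uniformly on $[0,t_*]$ to the solution $(X_2,V_2)$ of
\[
\dot X_2=V_2,\qquad \dot V_2+\gamma V_2+f''(x^0)X_2=-\tfrac12 f'''(x^0)(v^0)^2 ,
\]
with zero initial data. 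It remains to choose $t_*$ small and check that $V_2(t_*)\neq0$. For small $t$ we have $V_2(t)\approx -\frac t2 f'''(x_0)v_0^2$. Choosing the sign of $f'''$ near $x_0$ negative then makes $c_*=V_2(t_*)>0$.

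For \eqref{eq:stopping-time-optimality}, I will use the first hitting of zero and the implicit function theorem. For $J\in\{I,E\}$, $T^J_\beta$ is the first zero of $v^J_\beta$, and $\dot v^0(T^0-)=-(r+f'(x^0(T^0)))<0$. That quantity is negative by the interior margin, since $-f'(x^0(T^0))<r$, so the crossing is transversal. Moreover $v^I_\beta-v^E_\beta=\beta^2V_2+o(\beta^2)$ in $C^1$ near $T^0$, after extending the smooth pre-stopping flows slightly past $T^0$. Therefore
\[
T^I_\beta-T^E_\beta=\frac{\beta^2V_2(T^0)}{r+f'(x^0(T^0))}+o(\beta^2),
\]
which gives $c_{\rm stop}=V_2(T^0)/(r+f'(x^0(T^0)))$. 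The main obstacle is guaranteeing $V_2(T^0)>0$, rather than only $V_2(t_*)>0$. I would first try to make $f'''$ of one sign along the whole reference path, for instance $f(x)=\frac12x^2-\varepsilon_0 g(x)$ with $g'''>0$ on the range of $x^0$. I would then use the variation-of-constants formula for the damped oscillator with small $T^0$, obtained by choosing $v_0$ small so that the Green's kernel is positive. This yields $V_2(T^0)=\int_0^{T^0}G(T^0,s)\,(-\tfrac12f'''(x^0)(v^0)^2)(s)\,ds>0$. If that sign control fails, an explicit computation with $f(x)=\frac12x^2+ax^3$ regularized far away, at leading order in $a$, should settle the sign.
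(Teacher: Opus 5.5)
Your proposal is correct and follows the paper's proof. Both reduce to the scalar fixed-sign ODEs with friction selection $r$ and derive the same second-variation system $\dot U=W$, $\dot W+\gamma W+f''(x^0)U=-\frac12f'''(x^0)(v^0)^2$; both use $f'''(x_0)<0$ to get $\dot W(0)>0$, which fixes $t_*$, and both use transversality of the velocity zero to get $c_{\rm stop}=-W(T^0)/\dot v^0(T^0-)$. The differences are minor. For the decisive sign $W(T^0)>0$, you shorten the pre-stopping interval by taking $v_0$ small and note that $\Phi_{22}(T^0,s)$ stays close to $1$; the paper instead keeps $v_0$ fixed, takes $r$ large and computes the rescaled limit $rW_r(T_r^0)\to av_0^3/6$. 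You also obtain the $\beta^2$ expansion by Gronwall on $(X,V)/\beta^2$ rather than from $C^2$ parameter dependence. Both devices work, so your cubic fallback is not needed.
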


The proof is given in Appendix~\ref{app:sharpness-proof}.

% ============================================================================
\section{{Proximal Implicit Hessian-Driven Algorithm and}
{Finite Convergence of the Discrete Iterates}}
\label{sec:discrete-analysis}

The continuous analysis only requires local smoothness on bounded
sets. To obtain a step-size condition independent of the iterates, we now
assume that the gradient is globally Lipschitz continuous.
\begin{assumption}
\label{ass:discrete-smoothness}
The gradient of $f$ is globally $L$-Lipschitz continuous for some $L\geq0$.
\end{assumption}
\subsection{From the Implicit Hessian-Driven Dynamics to a Proximal
Algorithm}

Let $h>0$ and $t_k=kh$. We approximate $x(t_k)$ and $\dot x(t_k)$ by
$x_k$ and $v_k$, respectively. In \eqref{eq:implicit-dynamics}, the
acceleration, the viscous term, and the dry-friction term are discretized
implicitly at time $t_{k+1}$, whereas the gradient
$\nabla f(x_k+\beta v_k)$ is evaluated at time $t_k$.
This gives
\begin{equation}
\left\{
\begin{aligned}
&\frac{v_{k+1}-v_k}{h}+\gamma v_{k+1}
+\partial\phi(v_{k+1})
+\nabla f(x_k+\beta v_k)\ni0,\\
&\frac{x_{k+1}-x_k}{h}=v_{k+1},
\end{aligned}
\right.
\label{eq:discrete-inclusion}
\end{equation}
for $k\geq0$, with $x_0$ and $v_0$ prescribed. Introduce the ghost point
$x_{-1}=x_0-hv_0$. Then
$$
v_k=\frac{x_k-x_{k-1}}{h}
\qquad (k\geq0),
$$
and eliminating the velocities from \eqref{eq:discrete-inclusion} yields
the equivalent second-order discretization
\begin{equation}
\frac{x_{k+1}-2x_k+x_{k-1}}{h^2}
+\frac{\gamma}{h}(x_{k+1}-x_k)
+\partial\phi\left(\frac{x_{k+1}-x_k}{h}\right)
+\nabla f\left(x_k+\frac{\beta}{h}(x_k-x_{k-1})\right)\ni0.
\label{eq:position-discretization}
\end{equation}
for every $k\geq0$.
This formula is implicit only with respect to the dry-friction term.
The gradient at the shifted point is evaluated explicitly.

To solve \eqref{eq:discrete-inclusion}, set
$$
y_k=x_k+\beta v_k,
\qquad
g_k=\nabla f(y_k),
\qquad
a_h=\frac{1}{1+h\gamma},
\qquad
\lambda_h=\frac{h}{1+h\gamma}.
$$
Recall that, for $\lambda>0$, the proximal mapping of $\phi$ is
$$
\prox_{\lambda\phi}(p)
=\argmin_{w\in\Hc}
\left\{\lambda\phi(w)+\frac12\norm{w-p}^2\right\}
=(\Id+\lambda\partial\phi)^{-1}(p).
$$
The velocity inclusion is equivalent to
$$
0\in v_{k+1}-\bigl(a_hv_k-\lambda_hg_k\bigr)
+\lambda_h\partial\phi(v_{k+1}).
$$
By the resolvent identity, its unique solution is
\begin{equation}
p_{k+1}=a_hv_k-\lambda_hg_k,
\qquad
v_{k+1}=\prox_{\lambda_h\phi}(p_{k+1}).
\label{eq:proximal-update}
\end{equation}
We have therefore obtained the following implementable algorithm.

\begin{center}
\fbox{\begin{minipage}{0.92\linewidth}
\textbf{{\textsc{IH-IPGDF}: Implicit Hessian-Driven Inertial
Proximal-Gradient Algorithm with Dry Friction}}

\medskip
\textit{Initialization.} Choose $x_0\in\Hc$, $v_0\in\Hc$, a step size
$h>0$, and parameters $\gamma>0$, $\beta\geq0$. Set
$$
a_h=\frac{1}{1+h\gamma},
\qquad
\lambda_h=\frac{h}{1+h\gamma}.
$$
\textit{Iteration.} For $k=0,1,2,\ldots$, compute
\begin{align*}
y_k&=x_k+\beta v_k,\\
p_{k+1}&=a_hv_k-\lambda_h\nabla f(y_k),\\
v_{k+1}&=\prox_{\lambda_h\phi}(p_{k+1}),\\
x_{k+1}&=x_k+hv_{k+1}.
\end{align*}
\end{minipage}}
\end{center}

{For the simplified Coulomb friction law adopted here,
$\phi(v)=r\norm{v}$,} the proximal step is the vector
soft-thresholding operation
$$
v_{k+1}
=\left(1-\frac{\lambda_hr}{\norm{p_{k+1}}}\right)_{+}p_{k+1},
$$
with $v_{k+1}=0$ when $p_{k+1}=0$.

The three discrete methods used in the numerical comparison fit the same
proximal recursion. Set $x_0^J=x_0$ and $v_0^J=v_0$ for
$J\in\{D,E,I\}$, and let $x_{-1}^E=x_0-hv_0$. For $k\geq0$, define
$$
G_k^{D}=\nabla f(x_k^D),\qquad
G_k^{E}=\nabla f(x_k^E)
+\frac{\beta}{h}\bigl(\nabla f(x_k^E)-\nabla f(x_{k-1}^E)\bigr),\qquad
G_k^{I}=\nabla f(x_k^I+\beta v_k^I).
$$
The DF, EH, and IH updates are, respectively,
\begin{equation}
p_{k+1}^J=a_hv_k^J-\lambda_hG_k^J,
\qquad
v_{k+1}^J=\prox_{\lambda_h\phi}(p_{k+1}^J),
\qquad J\in\{D,E,I\},
\label{eq:three-discrete-updates}
\end{equation}
followed by $x_{k+1}^J=x_k^J+hv_{k+1}^J$. The EH initialization therefore
uses the single stored value $\nabla f(x_{-1})$.

Each iteration uses one gradient evaluation and one proximal
evaluation. Any additional evaluation of $\nabla f(x_k)$ used in the
numerical plots is not part of the algorithm.

The semi-implicit discretization of inertia, viscous damping, and dry
friction follows the construction of \cite{AdlyAttouch2021}. The
algorithm studied here differs in the smooth force: it uses one gradient at
$x_k+\beta v_k$, whereas the explicit Hessian scheme of
\cite{AdlyAttouch2020} uses a difference of consecutive gradients. The two
updates coincide on quadratic objectives, as shown in
Subsection~\ref{sec:quadratic-coincidence}.

\subsection{{Discrete Energy and Step-Size Conditions}}

Set
$$
d_{k+1}=v_{k+1}-v_k
$$
and
\begin{equation}
\E_k^h
=f(y_k)-\inf_{\Hc}f
+\frac{1+\beta\gamma}{2}\norm{v_k}^2
+\beta\phi(v_k).
\label{eq:discrete-energy}
\end{equation}
When $\beta=0$, the last term is omitted rather than interpreted as
$0\cdot(+\infty)$.
The identity
$$
\frac{y_{k+1}-y_k}{h}
=v_{k+1}+\frac{\beta}{h}d_{k+1}
$$
determines the test direction in the descent calculation.

\begin{proposition}
\label{prop:discrete-energy}
Suppose that $v_k\in\dom(\phi)$. Then every update
\eqref{eq:discrete-inclusion} satisfies
\begin{equation}
\E_{k+1}^h-\E_k^h+h\phi(v_{k+1})
+Q_h(v_{k+1},d_{k+1})\le0,
\label{eq:discrete-dissipation}
\end{equation}
\par\smallskip\noindent
where
$$
Q_h(v,d)
=A_h\norm{v}^2+C_h\norm{d}^2-Lh\beta\ip{v}{d},\;
A_h=h\gamma-\frac{Lh^2}{2},\;
C_h=\frac12+\frac{\beta}{h}
+\frac{\beta\gamma}{2}-\frac{L\beta^2}{2}.
$$
\end{proposition}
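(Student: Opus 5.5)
The plan is to test the velocity inclusion in \eqref{eq:discrete-inclusion} against the discrete increment of the shifted variable, $y_{k+1}-y_k=hv_{k+1}+\beta d_{k+1}$. This mirrors the continuous proof of Theorem~\ref{thm:continuous-energy}, where the equation was paired with $v+\beta\dot v$. We write the update as
$$
\frac{d_{k+1}}{h}+\gamma v_{k+1}+\xi_{k+1}+g_k=0,
\qquad \xi_{k+1}\in\partial\phi(v_{k+1}),
$$
with $g_k=\nabla f(y_k)$. Taking the inner product with $hv_{k+1}+\beta d_{k+1}$ gives the exact identity
$$
\ip{d_{k+1}}{v_{k+1}}+\frac{\beta}{h}\norm{d_{k+1}}^2+h\gamma\norm{v_{k+1}}^2+\beta\gamma\ip{v_{k+1}}{d_{k+1}}
+h\ip{\xi_{k+1}}{v_{k+1}}+\beta\ip{\xi_{k+1}}{d_{k+1}}+\ip{g_k}{y_{k+1}-y_k}=0 .
$$

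Each term is then converted into an energy difference plus a remainder.

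\textbf{Inertial and viscous cross terms.} The polarization identity $\ip{d_{k+1}}{v_{k+1}}=\frac12\norm{v_{k+1}}^2-\frac12\norm{v_k}^2+\frac12\norm{d_{k+1}}^2$ handles $\ip{d_{k+1}}{v_{k+1}}$ and, multiplied by $\beta\gamma$, the term $\beta\gamma\ip{v_{k+1}}{d_{k+1}}$. Together they produce the difference of $\frac{1+\beta\gamma}{2}\norm{v}^2$ plus $\frac{1+\beta\gamma}{2}\norm{d_{k+1}}^2$.

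\textbf{Friction terms.} The subgradient inequality at $v_{k+1}$, evaluated at the point $v_k\in\dom(\phi)$, gives $\beta\ip{\xi_{k+1}}{d_{k+1}}\geq\beta\phi(v_{k+1})-\beta\phi(v_k)$. This is where the hypothesis $v_k\in\dom(\phi)$ is needed, so that the right-hand side is finite. Evaluated instead at $0$, with $\phi(0)=0$, the same inequality gives $h\ip{\xi_{k+1}}{v_{k+1}}\geq h\phi(v_{k+1})$. When $\beta=0$ the first term is simply absent, consistent with the convention on \eqref{eq:discrete-energy}.

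\textbf{Gradient term.} Assumption~\ref{ass:discrete-smoothness} gives the descent lemma
$$
f(y_{k+1})\leq f(y_k)+\ip{g_k}{y_{k+1}-y_k}+\frac L2\norm{y_{k+1}-y_k}^2 .
$$
Expanding $\norm{hv_{k+1}+\beta d_{k+1}}^2=h^2\norm{v_{k+1}}^2+2h\beta\ip{v_{k+1}}{d_{k+1}}+\beta^2\norm{d_{k+1}}^2$ then yields the three negative contributions $-\frac{Lh^2}{2}\norm{v_{k+1}}^2$, $-Lh\beta\ip{v_{k+1}}{d_{k+1}}$ and $-\frac{L\beta^2}{2}\norm{d_{k+1}}^2$.

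Substituting these three estimates into the identity and grouping terms, the exact differences assemble into $\E^h_{k+1}-\E^h_k$. The remaining terms are $h\phi(v_{k+1})$ and a quadratic form. The coefficient of $\norm{v_{k+1}}^2$ is $h\gamma-\frac{Lh^2}{2}=A_h$. The coefficient of $\norm{d_{k+1}}^2$ is $\frac12+\frac\beta h+\frac{\beta\gamma}{2}-\frac{L\beta^2}{2}=C_h$. The cross term is $-Lh\beta\ip{v}{d}$. All inequalities point the same way, so this gives \eqref{eq:discrete-dissipation}.

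There is no real obstacle; the argument is bookkeeping. The delicate points are choosing the test direction so that the smooth force telescopes through $f(y_k)$ rather than $f(x_k)$, and using two different comparison points, $v_k$ and $0$, in the two subgradient inequalities. Positivity of $Q_h$ is not claimed here and is deferred to the step-size analysis.
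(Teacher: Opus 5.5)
Your proposal is correct and follows the paper's proof. The paper pairs $d_{k+1}+h\gamma v_{k+1}+h\xi_{k+1}+hg_k=0$ with $(y_{k+1}-y_k)/h=v_{k+1}+\frac{\beta}{h}d_{k+1}$, which is your pairing up to the factor $h$; it then uses the same polarization identity, the subgradient inequalities at $0$ and at $v_k$, and the descent lemma to collect the coefficients $A_h$, $C_h$, and $-Lh\beta$.
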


\begin{proof}
Choose $\xi_{k+1}\in\partial\phi(v_{k+1})$ so that
\begin{equation}
\label{eq:discrete-selected-equation}
d_{k+1}+h\gamma v_{k+1}+h\xi_{k+1}+hg_k=0.
\end{equation}
Pair this equality with
$$
v_{k+1}+\frac\beta h d_{k+1}
=\frac{y_{k+1}-y_k}{h}.
$$
The increment and viscous terms satisfy
\begin{align*}
\ip{d_{k+1}}{v_{k+1}}
&=\frac12\left(\norm{v_{k+1}}^2-\norm{v_k}^2
+\norm{d_{k+1}}^2\right),\\
\frac\beta h\norm{d_{k+1}}^2
&=\frac\beta h\norm{d_{k+1}}^2,\\
h\gamma\norm{v_{k+1}}^2
+\beta\gamma\ip{v_{k+1}}{d_{k+1}}
&=h\gamma\norm{v_{k+1}}^2\\
&\quad+\frac{\beta\gamma}{2}
\left(\norm{v_{k+1}}^2-\norm{v_k}^2
+\norm{d_{k+1}}^2\right).
\end{align*}
The two subgradient inequalities, respectively at $0$ and at $v_k$, give
$$
h\ip{\xi_{k+1}}{v_{k+1}}
\geq h\phi(v_{k+1}),\quad
\beta\ip{\xi_{k+1}}{d_{k+1}}
\geq\beta\bigl(\phi(v_{k+1})-\phi(v_k)\bigr).
$$
Finally, the $L$-smoothness of $f$ and
$y_{k+1}-y_k=hv_{k+1}+\beta d_{k+1}$ imply
\begin{equation*}
h\ip{g_k}{v_{k+1}}+\beta\ip{g_k}{d_{k+1}}
\geq f(y_{k+1})-f(y_k)
-\frac L2
\norm{hv_{k+1}+\beta d_{k+1}}^2.
\end{equation*}
The last square is exactly
$$
h^2\norm{v_{k+1}}^2+\beta^2\norm{d_{k+1}}^2
+2h\beta\ip{v_{k+1}}{d_{k+1}}.
$$
Substitution in \eqref{eq:discrete-selected-equation} and collection of the
three coefficients give
$$
A_h=h\gamma-\frac{Lh^2}{2},
\quad
C_h=\frac12+\frac\beta h+\frac{\beta\gamma}{2}
-\frac{L\beta^2}{2},
\quad
-Lh\beta,
$$
which proves \eqref{eq:discrete-dissipation}.
\end{proof}

Unlike the continuous energy relation
\eqref{eq:continuous-dissipation}, estimate
\eqref{eq:discrete-dissipation} is an inequality because the smooth term is
estimated by the descent lemma.

The domain condition is automatically satisfied after one proximal update.

\begin{remark}
The proximal optimality condition gives
$v_{k+1}\in\Dom(\partial\phi)\subseteq\dom(\phi)$ for every $k$. Hence, if
$v_0\notin\dom(\phi)$, Proposition~\ref{prop:discrete-energy} applies from
$k=1$ onward. When $\beta=0$, the term $\beta\phi(v_k)$ is absent and the
one-step calculation does not require $\phi(v_k)<+\infty$.
\end{remark}

The quadratic form is represented by
\begin{equation}
M_h=
\begin{pmatrix}
A_h & -Lh\beta/2\\
-Lh\beta/2 & C_h
\end{pmatrix}.
\label{eq:lyapunov-matrix}
\end{equation}
For $L>0$, $\gamma>0$, and $\beta\ge0$, define
\begin{equation}
h_{\mathrm{Lyap}}
=\frac{\gamma-L\beta+
\sqrt{(\gamma-L\beta)^2+
4L\beta\gamma/(1+\beta\gamma)}}{L}.
\label{eq:h-lyap}
\end{equation}

The determinant of $M_h$ gives the exact condition for this matrix to
be positive semidefinite.
\begin{theorem}
\label{thm:lyapunov-certificate}
Let $\beta\geq0$ and $h>0$.
\begin{enumerate}[label=\textup{(\roman*)},leftmargin=2.5em]
\item If $L>0$ and $\gamma>0$, then
$$
M_h\succeq0
\quad\Longleftrightarrow\quad
0<h\le h_{\mathrm{Lyap}},
$$
and
$$
M_h\succ0
\quad\Longleftrightarrow\quad
0<h<h_{\mathrm{Lyap}}.
$$
For $\beta=0$, one has $h_{\mathrm{Lyap}}=2\gamma/L$.
\item If $L=0$ and $\gamma>0$, then $M_h\succ0$ for every $h>0$.
\item If $L=\gamma=0$, then $M_h\succeq0$ but $M_h\not\succ0$ for every
$h>0$.
\item If $\gamma=0<L$, then $M_h\not\succeq0$ for every $h>0$.
\end{enumerate}
\end{theorem}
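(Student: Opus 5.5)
The proof is the standard characterisation of positive semidefiniteness for a symmetric $2\times2$ matrix. $M_h\succeq0$ if and only if $A_h\ge0$, $C_h\ge0$ and $\det M_h\ge0$; and $M_h\succ0$ if and only if $A_h>0$ and $\det M_h>0$. The work is therefore to compute $\det M_h$ as an explicit function of $h$ and to check that the diagonal conditions come for free below the determinant threshold.

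\textbf{Step 1: the determinant.} I would write $A_h=\tfrac h2(2\gamma-Lh)$ and
\[
C_h=\tfrac1{2h}\bigl(h(1+\beta\gamma)+2\beta-L\beta^2h\bigr).
\]
Expanding $4\det M_h=4A_hC_h-L^2h^2\beta^2$, the $L^2\beta^2h^2$ terms cancel and the polynomial factors cleanly:
\[
4\det M_h=(1+\beta\gamma)\Bigl(-Lh^2+2(\gamma-L\beta)h+\tfrac{4\beta\gamma}{1+\beta\gamma}\Bigr).
\]
For $L>0$ the bracket is a concave quadratic in $h$. Its constant term is nonnegative, so its two roots are of opposite sign (or one is zero). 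The positive root is exactly $h_{\mathrm{Lyap}}$ from \eqref{eq:h-lyap}. Hence, for $h>0$, $\det M_h\ge0$ if and only if $h\le h_{\mathrm{Lyap}}$, and $\det M_h>0$ if and only if $h<h_{\mathrm{Lyap}}$. When $\beta=0$ the roots are $0$ and $2\gamma/L$, which gives the stated value.

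\textbf{Step 2: the diagonal entries.} This is the only step that needs care, since $\det\ge0$ alone does not exclude $A_h,C_h$ both being negative. I would compare $h_{\mathrm{Lyap}}$ with $2\gamma/L$, the point where $A_h$ changes sign. The inequality $h_{\mathrm{Lyap}}\le2\gamma/L$ is equivalent to
\[
(\gamma-L\beta)^2+\tfrac{4L\beta\gamma}{1+\beta\gamma}\le(\gamma+L\beta)^2,
\]
that is, to $\tfrac{4L\beta\gamma}{1+\beta\gamma}\le4L\beta\gamma$, which holds. So $h\le h_{\mathrm{Lyap}}$ implies $A_h\ge0$.

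\begin{itemize}
\item If $\beta>0$, the comparison is strict, so $A_h>0$. Together with $\det M_h\ge0$ this forces $C_h\ge0$.
\item If $\beta=0$, then $C_h=\tfrac12>0$ directly.
\end{itemize}
This yields both equivalences in (i): for strict definiteness, $h<h_{\mathrm{Lyap}}$ gives $\det M_h>0$ and $A_h>0$, and conversely $M_h\succ0$ forces $\det M_h>0$.

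\textbf{Step 3: the degenerate cases.} These follow by direct inspection, since the off-diagonal entry $-Lh\beta/2$ vanishes whenever $L=0$.
\begin{itemize}
\item In (ii), $A_h=h\gamma>0$ and $C_h=\tfrac12+\tfrac\beta h+\tfrac{\beta\gamma}2>0$, so $M_h$ is diagonal with positive entries.
\item In (iii), $A_h=0$ and $C_h>0$ with zero off-diagonal, so $M_h$ is positive semidefinite but singular, hence not positive definite.
\item In (iv), $A_h=-Lh^2/2<0$, which already rules out $M_h\succeq0$.
\end{itemize}
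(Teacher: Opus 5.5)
Your proposal is correct and follows the paper's proof essentially step for step: the same factorization $\det M_h=\frac{1+\beta\gamma}{4}P(h)$, the identification of $h_{\mathrm{Lyap}}$ as the positive root, the comparison $h_{\mathrm{Lyap}}\le 2\gamma/L$ to secure the diagonal conditions, and direct inspection of cases (ii)--(iv). The only cosmetic difference is that the paper gets $h_{\mathrm{Lyap}}\le 2\gamma/L$ by evaluating $P(2\gamma/L)=-4\beta^2\gamma^2/(1+\beta\gamma)\le 0$, whereas you square the radicand comparison.
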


\begin{proof}
Assume first that $L>0$ and $\gamma>0$. Direct expansion, with no estimate,
gives
\begin{equation}
\label{eq:lyapunov-determinant}
\det M_h
=\frac{1+\beta\gamma}{4}P(h),
\qquad
P(h)=\frac{4\beta\gamma}{1+\beta\gamma}
+2(\gamma-L\beta)h-Lh^2.
\end{equation}
The polynomial $P$ is strictly concave. Its positive root is precisely
$h_{\mathrm{Lyap}}$ in \eqref{eq:h-lyap}. When $\beta>0$, its other root is
negative, whereas for $\beta=0$ its roots are $0$ and $2\gamma/L$.
Consequently,
$$
\det M_h\geq0
\quad\Longleftrightarrow\quad
0<h\leq h_{\mathrm{Lyap}}.
$$

It remains to check the diagonal conditions, which cannot be inferred from
the determinant alone. Since
$$
P\left(\frac{2\gamma}{L}\right)
=-\frac{4\beta^2\gamma^2}{1+\beta\gamma}\leq0,
$$
one has $h_{\mathrm{Lyap}}\leq2\gamma/L$, with strict inequality when
$\beta>0$. Thus
$$
A_h=h\left(\gamma-\frac{Lh}{2}\right)\geq0
$$
for $0<h\leq h_{\mathrm{Lyap}}$. For $0<h<h_{\mathrm{Lyap}}$, one has
$A_h>0$ and $\det M_h>0$, hence $M_h\succ0$. At
$h=h_{\mathrm{Lyap}}$ and $\beta>0$, one still has $A_h>0$ while
$\det M_h=0$, so $M_h\succeq0$ with rank one. At the remaining endpoint
$\beta=0$, $h=2\gamma/L$, the matrix is
$$
M_h=
\begin{pmatrix}
0&0\\[0.2em]
0&1/2
\end{pmatrix}.
$$
If $h>h_{\mathrm{Lyap}}$, the determinant is negative; hence the matrix is
not positive semidefinite. This proves~(i).

If $L=0$, the matrix is diagonal with
$$
A_h=h\gamma,
\qquad
C_h=\frac12+\frac\beta h+\frac{\beta\gamma}{2}>0.
$$
This proves~(ii) and~(iii). Finally, if $\gamma=0<L$, then
$A_h=-Lh^2/2<0$, so positive semidefiniteness is impossible. This
proves~(iv).
\end{proof}

The condition $h\leq h_{\mathrm{Lyap}}$ is sufficient for the
decrease of the energy. It is not asserted to be necessary for the
stability of the algorithm.

\begin{remark}
The number $h_{\mathrm{Lyap}}$ is the exact boundary for the positive
semidefiniteness of the matrix \eqref{eq:lyapunov-matrix}. It gives a
sufficient condition for the decrease of the discrete energy and for
convergence. It need not be the exact stability boundary of the nonlinear
algorithm.
\end{remark}

\subsection{{Asymptotic and Finite Convergence of the Discrete Iterates}}

Positive semidefiniteness is sufficient for the finite-length argument; strict
coercivity is not required for the basic convergence conclusion.

\begin{theorem}
\label{thm:discrete-convergence}
Assume $M_h\succeq0$ and $v_0\in\dom(\phi)$. Then
\begin{equation}
\label{eq:discrete-finite-length}
hr\sum_{k=0}^{+\infty}\norm{v_{k+1}}
\leq h\sum_{k=0}^{+\infty}\phi(v_{k+1})
\leq\E_0^h.
\end{equation}
Consequently, there exists $x_\infty\in\Hc$ such that
\begin{align}
&x_k\to x_\infty,
\qquad
v_k\to0,
\qquad
d_{k+1}\to0,
\qquad
y_k\to x_\infty
\\
\label{eq:discrete-terminal-balance}
&-\nabla f(x_\infty)\in\partial\phi(0).
\end{align}
Moreover, if $M_h\succ0$, then
$$
\sum_{k=0}^{+\infty}
\left(\norm{v_{k+1}}^2+\norm{d_{k+1}}^2\right)<+\infty.
$$
\end{theorem}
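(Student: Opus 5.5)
The plan is to telescope the one-step inequality \eqref{eq:discrete-dissipation} of Proposition~\ref{prop:discrete-energy}, which is available for every $k\geq0$. It applies at $k=0$ because $v_0\in\dom(\phi)$, and for $k\geq1$ because $v_k$ is a proximal output and hence lies in $\Dom(\partial\phi)$. Since $M_h\succeq0$, we have $Q_h(v_{k+1},d_{k+1})\geq0$. Summing from $k=0$ to $N-1$ and using $\E_N^h\geq0$ (recall $f\geq\inf f$, $\phi\geq0$, $1+\beta\gamma>0$) gives
\[
h\sum_{k=0}^{N-1}\phi(v_{k+1})+\sum_{k=0}^{N-1}Q_h(v_{k+1},d_{k+1})\leq\E_0^h .
\]
Letting $N\to\infty$ and using the lemma $\phi(v)\geq r\norm{v}$ yields \eqref{eq:discrete-finite-length}. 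If $M_h\succ0$, then $Q_h(v,d)\geq\mu(\norm{v}^2+\norm{d}^2)$, where $\mu>0$ is the smallest eigenvalue of the $2\times2$ matrix. The same telescoped sum then gives the square-summability claim.

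Next, $x_{k+1}-x_k=hv_{k+1}$, so $\sum\norm{x_{k+1}-x_k}<\infty$. Hence $(x_k)$ is Cauchy and converges strongly to some $x_\infty$. Summability gives $v_k\to0$, and therefore $d_{k+1}=v_{k+1}-v_k\to0$ and $y_k=x_k+\beta v_k\to x_\infty$. Unlike the continuous case, no acceleration estimate is needed here: the discrete velocity tends to zero simply because its norms are summable.

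For the terminal balance \eqref{eq:discrete-terminal-balance}, use the selected equation \eqref{eq:discrete-selected-equation}, written as
\[
\xi_{k+1}=-\tfrac1h d_{k+1}-\gamma v_{k+1}-\nabla f(y_k)\in\partial\phi(v_{k+1}).
\]
Since $\nabla f$ is $L$-Lipschitz, $\nabla f(y_k)\to\nabla f(x_\infty)$, and together with $d_{k+1}\to0$ and $v_{k+1}\to0$ this gives $\xi_{k+1}\to-\nabla f(x_\infty)$ strongly. The graph of the maximally monotone operator $\partial\phi$ is strongly closed in $\Hc\times\Hc$. Since $v_{k+1}\to0$ strongly as well, we conclude $-\nabla f(x_\infty)\in\partial\phi(0)$.

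A more elementary alternative is to pass to the limit in the subgradient inequality $\phi(w)\geq\phi(v_{k+1})+\ip{\xi_{k+1}}{w-v_{k+1}}$ for each fixed $w$. This uses $\phi(v_{k+1})\geq0=\phi(0)$, which follows from the assumption that $0$ minimizes $\phi$ with value $0$.

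The only mildly delicate point is the step $d_{k+1}\to0$ in the semidefinite but not definite case. It does not come from the quadratic form at all, but directly from $v_k\to0$, which follows from the $\ell^1$ bound. Beyond that, the argument only requires checking that the energy is bounded below, so that telescoping is legitimate. With $\beta=0$ the term $\beta\phi(v_k)$ is simply absent, and the argument is unchanged.
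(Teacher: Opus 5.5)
Your proposal is correct and follows the paper's proof essentially step by step. You telescope the one-step dissipation inequality using $Q_h\geq0$ and $\E_N^h\geq0$, deduce finite length and strong convergence of $(x_k)$, $(v_k)$, $(y_k)$, and pass to the limit in the selected equation through closedness of the graph of $\partial\phi$. The paper cites strong-to-weak closedness, but strong-strong closedness suffices here since both sequences converge strongly, and so does your subgradient-inequality alternative.
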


\begin{proof}
Since $Q_h\geq0$, summing \eqref{eq:discrete-dissipation} from $k=0$ to
$N$ gives
$$
\E_{N+1}^h
+h\sum_{k=0}^N\phi(v_{k+1})
\leq\E_0^h.
$$
The energy is nonnegative and $\phi(v)\geq r\norm{v}$, so monotone
convergence as $N\to+\infty$ proves \eqref{eq:discrete-finite-length}.
Hence
$$
\sum_{k=0}^{+\infty}\norm{x_{k+1}-x_k}
=h\sum_{k=0}^{+\infty}\norm{v_{k+1}}<+\infty,
$$
and $(x_k)$ converges strongly to some $x_\infty$. The summability in
\eqref{eq:discrete-finite-length} gives $v_k\to0$, whence
$d_{k+1}=v_{k+1}-v_k\to0$ and $y_k=x_k+\beta v_k\to x_\infty$.
Therefore $g_k\to\nabla f(x_\infty)$.

The selection in \eqref{eq:discrete-selected-equation} satisfies
$$
\xi_{k+1}
=-g_k-\gamma v_{k+1}-\frac1h d_{k+1}
\longrightarrow-\nabla f(x_\infty).
$$
Since $v_{k+1}\to0$ and the graph of the maximally monotone operator
$\partial\phi$ is strong to weak sequentially closed, the strong convergence displayed
above yields \eqref{eq:discrete-terminal-balance}.

If $M_h\succ0$, let $\lambda_h^{\min}>0$ denote its smallest eigenvalue.
Then
$$
Q_h(v,d)\geq\lambda_h^{\min}
\left(\norm{v}^2+\norm{d}^2\right).
$$
Keeping $Q_h$ in the summed energy inequality proves the final assertion.
\end{proof}

The same convergence argument remains valid when the initial velocity enters
the effective domain only after the first update.

\begin{remark}
If $v_0\notin\dom(\phi)$, the first proximal update satisfies
$v_1\in\Dom(\partial\phi)\subseteq\dom(\phi)$. Thus the same proof, summed from
$k=1$, gives
$$
hr\sum_{k=1}^{+\infty}\norm{v_{k+1}}\leq\E_1^h.
$$
This is enough for all the convergence conclusions. When $\beta=0$, the
energy is understood without the term $\beta\phi(v_k)$, so the calculation
can already start at $k=0$.
\end{remark}

The limits of the continuous and discrete systems satisfy the same
equilibrium condition. When $f$ is strongly convex, this condition gives
estimates that do not depend on whether the limit is reached in finite time
or only asymptotically.

\begin{proposition}
\label{prop:precision-fort-conv}
Suppose that Assumption~\ref{ass:basic} holds and that $f$ is
$\mu$-strongly convex for some $\mu>0$, namely
\begin{equation}
\label{eq:strong-convexity-terminal}
f(y)\geq f(x)+\ip{\nabla f(x)}{y-x}
+\frac{\mu}{2}\norm{y-x}^{2}
\qquad\text{for all }x,y\in\Hc.
\end{equation}
Then $f$ has a unique minimizer $x^*$. For every $\bar x\in\Hc$, one has
\begin{equation}
\label{eq:est-grad-fort-conv}
\norm{\bar x-x^*}
\leq\frac{\norm{\nabla f(\bar x)}}{\mu},
\qquad
0\leq f(\bar x)-f(x^*)
\leq\frac{\norm{\nabla f(\bar x)}^2}{2\mu}.
\end{equation}
If, in addition, $\bar x$ satisfies
\begin{equation}
\label{eq:abstract-terminal-balance}
{-\nabla f(\bar x)\in\Cfr=\partial\phi(0)}
\end{equation}
{and the set $\partial\phi(0)$ is bounded, with}
\begin{equation}
\label{eq:rayon-ext-friction}
R_\phi:=\sup\{\norm{p}:p\in\Cfr\}<+\infty,
\end{equation}
then
\begin{equation}
\label{eq:est-unif-fort-conv}
\norm{\bar x-x^*}\leq\frac{R_\phi}{\mu},
\qquad
0\leq f(\bar x)-f(x^*)\leq\frac{R_\phi^2}{2\mu}.
\end{equation}
\end{proposition}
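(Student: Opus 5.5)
The plan is to derive everything from the strong convexity inequality \eqref{eq:strong-convexity-terminal}, used once for existence and uniqueness of the minimizer and once for the two estimates in \eqref{eq:est-grad-fort-conv}. The uniform bounds \eqref{eq:est-unif-fort-conv} then follow by inserting $\norm{\nabla f(\bar x)}\leq R_\phi$.

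\emph{Existence and uniqueness of $x^*$.} Since $f\in C^1(\Hc)$ is convex, it is continuous and hence weakly lower semicontinuous. Taking $x=0$ in \eqref{eq:strong-convexity-terminal} gives
\[
f(y)\geq f(0)-\norm{\nabla f(0)}\norm{y}+\frac\mu2\norm{y}^2,
\]
so $f$ is coercive. A minimizing sequence is therefore bounded and has a weakly convergent subsequence, whose weak limit is a minimizer. Strong convexity makes $f$ strictly convex, so the minimizer $x^*$ is unique, and Fermat's rule gives $\nabla f(x^*)=0$.

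\emph{Distance estimate.} Write \eqref{eq:strong-convexity-terminal} twice, with $(x,y)=(\bar x,x^*)$ and with $(x,y)=(x^*,\bar x)$, and add the two inequalities. This gives strong monotonicity of the gradient:
\[
\ip{\nabla f(\bar x)-\nabla f(x^*)}{\bar x-x^*}\geq\mu\norm{\bar x-x^*}^2.
\]
Since $\nabla f(x^*)=0$, the Cauchy--Schwarz inequality yields $\mu\norm{\bar x-x^*}^2\leq\norm{\nabla f(\bar x)}\norm{\bar x-x^*}$. Dividing when $\bar x\neq x^*$ gives the first bound; the case $\bar x=x^*$ is trivial.

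\emph{Objective-gap estimate.} Take $x=\bar x$ in \eqref{eq:strong-convexity-terminal} and minimize both sides over $y\in\Hc$. The left side has infimum $f(x^*)$. The right side is a quadratic in $y-\bar x$, minimized at $y=\bar x-\nabla f(\bar x)/\mu$ with minimum value $f(\bar x)-\norm{\nabla f(\bar x)}^2/(2\mu)$. This gives the second bound in \eqref{eq:est-grad-fort-conv}, and the lower bound $0\leq f(\bar x)-f(x^*)$ holds by minimality of $x^*$.

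\emph{Uniform bounds.} If $\bar x$ satisfies \eqref{eq:abstract-terminal-balance}, then $-\nabla f(\bar x)\in\Cfr$, so \eqref{eq:rayon-ext-friction} gives $\norm{\nabla f(\bar x)}\leq R_\phi$. Substituting this into \eqref{eq:est-grad-fort-conv} yields \eqref{eq:est-unif-fort-conv}. By Theorem~\ref{thm:terminal-balance} and Theorem~\ref{thm:discrete-convergence}, these bounds apply to the continuous and discrete limit points $x_\infty$ alike.

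No step is a real obstacle. The only point needing care is existence of $x^*$ in infinite dimensions, which uses weak lower semicontinuity and coercivity rather than compactness. Note that the hypothesis $\inf f>-\infty$ from Assumption~\ref{ass:basic} is not needed for this proposition.
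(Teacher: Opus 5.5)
Your proposal is correct and follows the paper's proof essentially step by step. Both use coercivity plus weak lower semicontinuity for existence, strong monotonicity of $\nabla f$ with Cauchy--Schwarz for the distance bound, and substitution of $\norm{\nabla f(\bar x)}\leq R_\phi$ for the uniform bounds; the only cosmetic difference is in the objective-gap estimate, where you minimize the strong-convexity lower bound over all $y$ instead of inserting $y=x^*$ and bounding $\ip{\nabla f(\bar x)}{\bar x-x^*}-\frac{\mu}{2}\norm{\bar x-x^*}^2$ as the paper does, which amounts to the same computation.
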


\begin{proof}
Fix $x_{\rm ref}\in\Hc$. Strong convexity gives
\[
f(x)\geq f(x_{\rm ref})
+\ip{\nabla f(x_{\rm ref})}{x-x_{\rm ref}}
+\frac{\mu}{2}\norm{x-x_{\rm ref}}^2,
\]
and hence $f$ is coercive. Let $(x_n)$ be a minimizing sequence. It is
bounded by coercivity, so reflexivity of the Hilbert space provides a
subsequence, not relabeled, and $x^*\in\Hc$ such that
$x_n\rightharpoonup x^*$. Since a continuous convex function is weakly
lower semicontinuous,
\[
f(x^*)\leq\liminf_{n\to\infty}f(x_n)=\inf_{\Hc}f.
\]
Thus $x^*$ is a minimizer. Strong convexity makes it unique, and
differentiability gives $\nabla f(x^*)=0$.

The gradient of a differentiable $\mu$-strongly convex function is
$\mu$-strongly monotone. Hence
$$
\mu\norm{\bar x-x^*}^2
\leq\ip{\nabla f(\bar x)-\nabla f(x^*)}{\bar x-x^*}
\leq\norm{\nabla f(\bar x)}\norm{\bar x-x^*}.
$$
If $\bar x=x^*$, the distance estimate is immediate; otherwise, division by
$\norm{\bar x-x^*}$ proves the distance estimate in
\eqref{eq:est-grad-fort-conv}. Applying
\eqref{eq:strong-convexity-terminal} with $x=\bar x$ and $y=x^*$ gives
$$
f(\bar x)-f(x^*)
\leq\ip{\nabla f(\bar x)}{\bar x-x^*}
-\frac{\mu}{2}\norm{\bar x-x^*}^2
\leq\frac{\norm{\nabla f(\bar x)}^2}{2\mu}.
$$
The equilibrium condition \eqref{eq:abstract-terminal-balance} and
\eqref{eq:rayon-ext-friction} imply
$\norm{\nabla f(\bar x)}\leq R_\phi$, and therefore yield
\eqref{eq:est-unif-fort-conv}.
\end{proof}

The continuous limit $x_\infty$ in
Theorem~\ref{thm:terminal-balance} and, for every fixed step size satisfying
the hypotheses of Theorem~\ref{thm:discrete-convergence}, the discrete limit
$x_\infty^h$ satisfy the equilibrium condition
\eqref{eq:abstract-terminal-balance}. Proposition
\ref{prop:precision-fort-conv} therefore applies to both
limits. If convergence occurs in finite time or in finitely many iterations,
the corresponding stopping position is the limit itself.

\begin{remark}
{For the simplified Coulomb friction law,
$\phi(v)=r\norm{v}$,} one has
$\Cfr=r\B$ and thus $R_\phi=r$. The constants in
\eqref{eq:est-unif-fort-conv} are optimal if only
the equilibrium condition is used. Indeed, for
$\Hc=\mathbb R$, $f(x)=\mu x^2/2$, and $\bar x=r/\mu$, condition
\eqref{eq:abstract-terminal-balance} holds at the boundary of $r\B$, while
both upper bounds in \eqref{eq:est-unif-fort-conv}
are attained.
\end{remark}

For later use, set
$$
q_k=-g_k=-\nabla f(y_k).
$$

Define the interior radius
$$
\rho_k
=\sup\left(
\{0\}\cup\left\{\rho>0:
-\nabla f(y_k)+\rho\B\subseteq\partial\phi(0)\right\}
\right).
$$
When the force is in the interior, $\rho_k$ equals its distance to the
boundary. {For the simplified Coulomb friction law,}
$$
\rho_k=r-\norm{\nabla f(y_k)}
$$
whenever this quantity is positive.

The permanent stopping index is
$$
\Kstop
=\inf\left\{K\in\N:
v_k=0\text{ for every }k\ge K\right\},
$$
with $\inf\varnothing=+\infty$.

The proximal update has an exact criterion for a zero velocity to remain
zero at every subsequent index.

\begin{proposition}
\label{prop:exact-discrete-sticking}
Let $j\in\N$. The sequence is constant from index $j$ onward if and only if
$$
v_j=0,
\qquad
-\nabla f(x_j)\in\partial\phi(0).
$$
\end{proposition}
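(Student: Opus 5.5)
The plan is to argue directly on the proximal update \eqref{eq:proximal-update}, using the resolvent characterization
$$
w=\prox_{\lambda\phi}(p)\iff p-w\in\lambda\partial\phi(w).
$$
Specialized to $w=0$, this reads
$$
\prox_{\lambda\phi}(p)=0\iff p\in\lambda\partial\phi(0)=\lambda\Cfr.
$$
I read ``the sequence is constant from index $j$ onward'' as: the state pairs $(x_k,v_k)$ satisfy $(x_k,v_k)=(x_j,v_j)$ for every $k\ge j$. This is the discrete analogue of assertion (i) in Proposition~\ref{prop:exact-sticking}. Under this reading, constancy of $x_k$ for $k\ge j$ forces $v_k=0$ for $k\ge j+1$, and constancy of the velocity then forces $v_j=0$ as well.

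\emph{Sufficiency.} Assume $v_j=0$ and $-\nabla f(x_j)\in\Cfr$. Then $y_j=x_j+\beta v_j=x_j$, so
$$
p_{j+1}=a_h\cdot0-\lambda_h\nabla f(x_j)=-\lambda_h\nabla f(x_j)\in\lambda_h\Cfr.
$$
The characterization above gives $v_{j+1}=\prox_{\lambda_h\phi}(p_{j+1})=0$, and hence $x_{j+1}=x_j+hv_{j+1}=x_j$. The pair $(x_{j+1},v_{j+1})=(x_j,0)$ therefore satisfies the same two conditions. Induction on $k$ then shows that the pair stays equal to $(x_j,0)$ for every $k\ge j$.

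\emph{Necessity.} Assume the pairs are constant from index $j$. Then $v_j=v_{j+1}$ and $x_{j+1}=x_j$, and the update $x_{j+1}=x_j+hv_{j+1}$ gives $v_{j+1}=0$, so $v_j=0$. Consequently $y_j=x_j$ and $p_{j+1}=-\lambda_h\nabla f(x_j)$. Since $\prox_{\lambda_h\phi}(p_{j+1})=v_{j+1}=0$, the characterization yields $-\lambda_h\nabla f(x_j)\in\lambda_h\Cfr$. Dividing by $\lambda_h>0$ gives $-\nabla f(x_j)\in\Cfr$. Equivalently, one can insert $v_j=v_{j+1}=0$ directly into \eqref{eq:discrete-inclusion}; it collapses to $0\in\partial\phi(0)+\nabla f(x_j)$.

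The argument is elementary. The only point needing care is the identification $y_j=x_j$, which holds precisely because $v_j=0$ and is what lets the shifted gradient reduce to $\nabla f(x_j)$. The convention of constant pairs, rather than constant positions alone, is also essential. With positions alone, $v_j=(x_j-x_{j-1})/h$ could be nonzero even though $x_k=x_j$ for all $k\ge j$, and the condition $v_j=0$ would fail.
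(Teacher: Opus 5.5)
Your proof is correct and follows essentially the same route as the paper. For necessity, you insert $v_j=v_{j+1}=0$ into the update at index $j$; your phrasing through $\prox_{\lambda_h\phi}(p)=0\iff p\in\lambda_h\partial\phi(0)$ is equivalent to the paper's use of the inclusion. For sufficiency, you check that $v_{j+1}=0$ satisfies the proximal optimality condition, invoke uniqueness of the proximal point, and induct, exactly as the paper does; your reading of constancy as constancy of the pairs $(x_k,v_k)$ is also the one implicit in the paper's conclusion $(x_k,v_k)=(x_j,0)$ for $k\ge j$.
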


\begin{proof}
If the sequence is constant from $j$ onward, the inclusion at index $j$
gives the displayed balance. Conversely, $v_j=0$ implies $y_j=x_j$, and the
balance means that $v_{j+1}=0$ satisfies the proximal optimality condition.
The proximal update is unique, so $x_{j+1}=x_j$. Induction gives
$(x_k,v_k)=(x_j,0)$ for every $k\geq j$.
\end{proof}

A uniform interior condition gives a bound on the number of remaining
iterations.

\begin{theorem}
\label{thm:arret-disc-quant}
If $\rho_k>0$ and $v_{k+1}\ne0$, then
\begin{equation}
\label{eq:discrete-speed-decay}
(1+h\gamma)\norm{v_{k+1}}+h\rho_k
\leq\norm{v_k}.
\end{equation}
Suppose that $\rho_k\ge\varepsilon>0$ for every $k\ge k_0$. If
$\gamma>0$, then
\begin{equation}
\Kstop-k_0
\le
\left\lceil
\frac{\log(1+\gamma\norm{v_{k_0}}/\varepsilon)}
{\log(1+h\gamma)}
\right\rceil.
\label{eq:discrete-stopping-bound}
\end{equation}
If $\gamma=0$, then
$$
\Kstop-k_0
\le
\left\lceil\frac{\norm{v_{k_0}}}{h\varepsilon}\right\rceil.
$$
Finally, under the hypotheses of Theorem~\ref{thm:discrete-convergence}, one
has $\Kstop<+\infty$ whenever
$$
-\nabla f(x_\infty)\in\interior(\partial\phi(0))
.$$
\end{theorem}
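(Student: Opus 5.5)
The plan is to mirror the continuous argument of Theorem~\ref{thm:finite-time-stabilization}, replacing the derivative of the speed by the one-step increment. Start from the selected equation \eqref{eq:discrete-selected-equation}, written as
$$
(1+h\gamma)v_{k+1}+h\xi_{k+1}=v_k+hq_k,
\qquad \xi_{k+1}\in\partial\phi(v_{k+1}),
$$
and assume $\rho_k>0$ and $v_{k+1}\neq0$. Set $u=v_{k+1}/\norm{v_{k+1}}$. For every $0<\rho<\rho_k$ the point $q_k+\rho u$ belongs to $\partial\phi(0)$. Monotonicity of $\partial\phi$ between $(v_{k+1},\xi_{k+1})$ and $(0,q_k+\rho u)$ gives $\ip{\xi_{k+1}-q_k}{u}\geq\rho$, and letting $\rho\uparrow\rho_k$ gives $\ip{\xi_{k+1}-q_k}{u}\ge\rho_k$. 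Pair the displayed equation with $u$. The left side yields $(1+h\gamma)\norm{v_{k+1}}+h\ip{\xi_{k+1}-q_k}{u}$, and the right side is $\ip{v_k}{u}\leq\norm{v_k}$. Together these give \eqref{eq:discrete-speed-decay}.

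Next I would prove the iteration count. Suppose $\rho_k\ge\varepsilon$ for $k\ge k_0$. The first step is that a single zero is permanent. If $v_{j}=0$ for some $j\ge k_0$, then $y_j=x_j$, and $\rho_j>0$ gives $-\nabla f(x_j)\in\Cfr$. Proposition~\ref{prop:exact-discrete-sticking} then shows the sequence is constant from $j$ on. It therefore suffices to bound the first index $j> k_0$ with $v_j=0$.

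While $v_{k+1}\ne0$, set $s_k=\norm{v_k}$. Then $(1+h\gamma)s_{k+1}+h\varepsilon\le s_k$, which is equivalent to
$$
s_{k+1}+\frac{\varepsilon}{\gamma}\le\frac{1}{1+h\gamma}\Bigl(s_k+\frac\varepsilon\gamma\Bigr)
$$
when $\gamma>0$. Iterating $n$ times gives $\varepsilon/\gamma<s_{k_0+n}+\varepsilon/\gamma\le(1+h\gamma)^{-n}(s_{k_0}+\varepsilon/\gamma)$, as long as all these velocities are nonzero. This is impossible once $n\geq\log(1+\gamma s_{k_0}/\varepsilon)/\log(1+h\gamma)$, so some $v_{k_0+n}$ with $n$ at most the ceiling in \eqref{eq:discrete-stopping-bound} vanishes. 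The permanence step then gives the bound on $\Kstop-k_0$. For $\gamma=0$ the same argument works with the linear decrease $s_{k+1}\le s_k-h\varepsilon$.

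For the final assertion, take $p_\infty=-\nabla f(x_\infty)$ with $p_\infty+2\varepsilon\B\subseteq\Cfr$. Theorem~\ref{thm:discrete-convergence} gives $y_k\to x_\infty$, and continuity of $\nabla f$ gives $q_k\to p_\infty$. Hence $q_k+\varepsilon\B\subseteq\Cfr$, that is $\rho_k\ge\varepsilon$, for all $k\ge k_0$, and the previous bound applies.

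The only delicate point I anticipate is the permanence step. The decay inequality alone only forces some velocity to vanish, and the interior condition must be evaluated at the shifted point $y_j$. When $v_j=0$ this point coincides with $x_j$, which is exactly what allows Proposition~\ref{prop:exact-discrete-sticking} to be invoked.
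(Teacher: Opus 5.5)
Your proposal is correct and follows the paper's proof essentially step by step. The shared steps are the monotonicity test against $q_k+\rho u$ with $\rho\uparrow\rho_k$, pairing the selected equation with $u=v_{k+1}/\norm{v_{k+1}}$, the contraction of $\norm{v_k}+\varepsilon/\gamma$ by the factor $(1+h\gamma)^{-1}$, permanence of the first zero via $y_j=x_j$ and Proposition~\ref{prop:exact-discrete-sticking}, and the $2\varepsilon$-ball argument for the final assertion. The only detail the paper spells out that you leave implicit is the degenerate case $\rho_k=+\infty$ (possible when $\partial\phi(0)=\Hc$), where the same inequality shows directly that $v_{k+1}\neq0$ cannot occur.
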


\begin{proof}
Fix $k$ with $\rho_k>0$ and $v_{k+1}\ne0$, and set
$u_{k+1}=v_{k+1}/\norm{v_{k+1}}$. Because $\rho_k$ is defined as a
supremum, take first any $0<\rho<\rho_k$. Then
$$
q_k+\rho u_{k+1}\in\partial\phi(0).
$$
Monotonicity of $\partial\phi$, applied to this point and to
$\xi_{k+1}\in\partial\phi(v_{k+1})$, gives
$$
\ip{\xi_{k+1}-q_k}{u_{k+1}}\geq\rho.
$$
Pairing \eqref{eq:discrete-selected-equation} with $u_{k+1}$ and using
$g_k=-q_k$ yields
$$
(1+h\gamma)\norm{v_{k+1}}+h\rho
\leq\ip{v_k}{u_{k+1}}\leq\norm{v_k}.
$$
Letting $\rho\uparrow\rho_k$ proves
\eqref{eq:discrete-speed-decay}. If $\rho_k=+\infty$, the same argument
shows directly that a nonzero $v_{k+1}$ is impossible.

Assume now $\rho_k\geq\varepsilon$ for $k\geq k_0$ and set
$a=1+h\gamma$. As long as no zero has occurred, iteration of
\eqref{eq:discrete-speed-decay} gives, for $\gamma>0$,
\begin{equation}
\label{eq:discrete-affine-iteration}
\norm{v_{k_0+n}}
\leq a^{-n}
\left(\norm{v_{k_0}}+\frac{\varepsilon}{\gamma}\right)
-\frac{\varepsilon}{\gamma}.
\end{equation}
For
$$
n=\left\lceil
\frac{\log(1+\gamma\norm{v_{k_0}}/\varepsilon)}
{\log(1+h\gamma)}
\right\rceil,
$$
the right-hand side of \eqref{eq:discrete-affine-iteration} is nonpositive.
Thus a zero must occur no later than $k_0+n$. At that index the
interior condition implies $-\nabla f(x_j)\in\partial\phi(0)$ because $v_j=0$ and
$y_j=x_j$. Proposition~\ref{prop:exact-discrete-sticking} makes the zero
permanent and proves \eqref{eq:discrete-stopping-bound}.

When $\gamma=0$, the same iteration reads
$$
\norm{v_{k_0+n}}
\leq\norm{v_{k_0}}-nh\varepsilon
$$
as long as the velocity remains nonzero, which gives the stated bound.

For the final assertion, Theorem~\ref{thm:discrete-convergence} gives
$q_k=-\nabla f(y_k)\to-\nabla f(x_\infty)$. The strict interior
condition therefore gives $\varepsilon>0$ and $k_0$ such that
$q_k+\varepsilon\B\subseteq\partial\phi(0)$ for every $k\geq k_0$.
The quantitative estimate applies.
\end{proof}

% ============================================================================
\section{Consistency, Shadowing, and Quadratic Stability}
\label{sec:consistance-disc}

We now study three questions: the approximation of the continuous
implicit Hessian-driven system, the comparison of the implicit and explicit
discretizations, and the stability of the quadratic iteration.

\subsection{{Vanishing Step Size and Convergence of Stopping Times}}
\label{sec:vanishing-stepsize}

The preceding results concern a fixed step size. We now justify the passage
from the proximal recursion to {the implicit Hessian-driven dynamics} as
$h\downarrow0$.
For each $h>0$, let $(x_k^h,v_k^h)_{k\geq0}$ be generated by
\eqref{eq:discrete-inclusion} from the same initial data $(x_0,v_0)$, and
write $t_k=kh$. On $[t_k,t_{k+1}]$, define the continuous piecewise affine
interpolants
\begin{equation}
\label{eq:affine-interpolants}
\begin{aligned}
x_h(t)&=x_k^h+(t-t_k)v_{k+1}^h,\\
v_h(t)&=v_k^h+(t-t_k)a_{k+1}^h,
\qquad
a_{k+1}^h=\frac{v_{k+1}^h-v_k^h}{h}.
\end{aligned}
\end{equation}
Thus $(x_h(t_k),v_h(t_k))=(x_k^h,v_k^h)$ at every node.

{We first prove an error estimate on a bounded time interval. The order
$h^{1/2}$ is the general estimate for the implicit Euler approximation of a
maximally monotone evolution when the initial datum belongs to the operator
domain. We do not assume time differentiability of the dry-friction
selection. The same general order for ISIHD is discussed in
\cite{MaulenSotoFadiliAttouchOchs2026}. Here we prove it directly for the
proximal recursion and then study the convergence of the physical stopping
times and stopping positions.}

\begin{theorem}
\label{thm:vanishing-stepsize}
Suppose that Assumptions~\ref{ass:basic} and
\ref{ass:discrete-smoothness} hold, fix $\beta\geq0$, and let
$v_0\in\Dom(\partial\phi)$. Let $(x,v)$ be the strong solution of
\eqref{eq:implicit-dynamics}, and let $(x_h,v_h)$ be defined by
\eqref{eq:affine-interpolants}. Then, for every $T>0$, there exist
$C_T>0$ and $h_T>0$ such that
\begin{equation}
\label{eq:err-disc-hor-fin}
\sup_{0\leq t\leq T}
\left(
\norm{x_h(t)-x(t)}+\norm{v_h(t)-v(t)}
\right)
\leq C_T\sqrt h
\end{equation}
for every $0<h\leq h_T$.
\end{theorem}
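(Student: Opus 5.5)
The plan is to compare the proximal recursion with the continuous inclusion directly, using monotonicity of $\partial\phi$ and a discrete Gronwall argument in the spirit of the classical implicit Euler estimate of Brezis for maximally monotone evolutions. The error of order $h^{1/2}$ will come from two sources: the time-lag in the gradient argument, $y_k$ versus $y(t)$ for $t\in[t_k,t_{k+1}]$, and the mismatch between the piecewise affine interpolant $v_h(t)$ and the node value $v_{k+1}^h$ at which the subgradient is evaluated. Both mismatches are controlled by $h$ times the size of the discrete acceleration.

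\emph{A priori bounds.} The first step is to obtain bounds uniform in $h$. For the continuous solution, Theorem~\ref{thm:well-posedness} gives $v\in W^{1,\infty}(0,T;\Hc)$ and $\xi\in L^\infty$, because $v_0\in\Dom(\partial\phi)$. For the discrete scheme I would derive the analogue of the Brezis estimate on $\norm{a_{k+1}^h}$. Subtract two consecutive velocity inclusions, pair the result with $d_{k+1}=v_{k+1}-v_k$, and use monotonicity of $\partial\phi$ together with the $L$-Lipschitz bound
\[
\norm{g_k-g_{k-1}}\le L\bigl(h\norm{v_k}+\beta\norm{d_k}\bigr).
\]
This gives
\[
\norm{d_{k+1}}(1+h\gamma)\le \norm{d_k}(1+Lh\beta/h\cdot h)+Lh^2\norm{v_k},
\]
and the factor $\beta\norm{d_k}/h\cdot h$ is harmless. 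Hence $\norm{a_{k+1}^h}\le (1+Ch)\norm{a_k^h}+C$. The first step is bounded by
\[
\norm{a_1^h}\le \norm{\gamma v_0+\nabla f(y_0)}+\norm{\partial\phi^0(v_0)},
\]
using the resolvent property $\norm{(v_0-\prox_{\lambda\phi}(v_0-\lambda w))/\lambda}\le \norm{w}+\norm{\partial\phi^0(v_0)}$. A discrete Gronwall argument then gives $\sup_{kh\le T}\norm{a_k^h}\le A_T$, and consequently uniform bounds on $v_k^h$ and $x_k^h$ for $kh\le T$. Alternatively, the discrete energy of Proposition~\ref{prop:discrete-energy} bounds the velocities once $h\le h_{\mathrm{Lyap}}$, which is how I would define $h_T$. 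This step is where $v_0\in\Dom(\partial\phi)$ is essential.

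\emph{Error inequality.} Write the interpolated equation as
\[
\dot v_h(t)+\gamma v_{k+1}+\xi_{k+1}+\nabla f(y_k)=0 \quad\text{on } (t_k,t_{k+1}),
\qquad \dot x_h=v_{k+1}.
\]
Set $X=x_h-x$ and $V=v_h-v$, subtract the continuous equation, and pair with $V$. In the monotone term $\ip{\xi_{k+1}-\xi(t)}{v_{k+1}-v(t)}\ge0$ I would insert
\[
V=(v_{k+1}-v)-(v_{k+1}-v_h(t)),
\qquad \norm{v_{k+1}-v_h(t)}\le h\norm{a_{k+1}^h}\le hA_T.
\]
The leftover term is bounded by $(\norm{\xi_{k+1}}+\norm{\xi(t)})\,hA_T$. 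Since $\xi_{k+1}=-a_{k+1}-\gamma v_{k+1}-g_k$ is bounded, this contributes $O(h)$ to $\frac{d}{dt}\norm{V}^2$. The gradient term is handled by
\[
\norm{\nabla f(y_k)-\nabla f(y(t))}\le L\bigl(\norm{X}+\beta\norm{V}+O(h)\bigr),
\]
using $\norm{x_k-x_h(t)}\le hV_T$ and $\norm{v_k-v_h(t)}\le hA_T$. Similarly, pairing $\dot X=v_{k+1}-v$ with $X$ gives $\norm{X}\norm{V}+O(h)\norm{X}$. Summing,
\[
\frac{d}{dt}\bigl(\norm{X}^2+\norm{V}^2\bigr)\le K_T\bigl(\norm{X}^2+\norm{V}^2\bigr)+C_Th,
\]
with zero initial error. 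Gronwall's inequality then yields $\norm{X}+\norm{V}\le C_T\sqrt h$ on $[0,T]$.

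The main obstacle is the uniform bound on the discrete acceleration $\norm{a_k^h}$, since this is what makes the subgradient mismatch $O(h)$ rather than merely $o(1)$. It requires handling the shifted gradient difference $g_k-g_{k-1}$, which contains $\beta d_k$, and the initialization through the minimal section $\partial\phi^0(v_0)$.
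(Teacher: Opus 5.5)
Your argument is correct, but the comparison step follows a different route from the paper's. You share with the paper the key a priori estimate: a uniform bound on the discrete acceleration $a_{k+1}^h$. Both obtain it by subtracting consecutive inclusions, using monotonicity, and starting from some $\xi_0\in\partial\phi(v_0)$ via nonexpansiveness of the resolvent. After that, you compare the interpolants with $(x,v)$ directly, through a continuous-time energy estimate on $(X,V)$. There the only source term not weighted by the error is the subgradient mismatch $\ip{\xi_{k+1}^h-\xi(t)}{v_{k+1}^h-v_h(t)}=O(h)$, and this term alone produces the $\sqrt h$.

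The paper instead proceeds in several separate steps:
\begin{enumerate}[label=\textup{(\alph*)},leftmargin=2.5em]
\item it rewrites the scheme as an implicit Euler scheme for the quasi-monotone operator $\mathcal A+\mathcal B$, with residual $R_{k+1}^h=(0,g(z_{k+1}^h)-g(z_k^h))=O(h)$;
\item it introduces an auxiliary unperturbed Euler sequence $\widehat z_k$, which needs the contraction argument for $(\Id+h\mathcal C)^{-1}$ and separate bounds on $\widehat c_{k+1}$;
\item it proves the $O(\sqrt h)$ nodal estimate for $\widehat z_k$;
\item it adds an $O(h)$ stability estimate for the residual;
\item only then does it pass to the interpolants.
\end{enumerate}
Your route is shorter and controls the interpolants at every $t$ with no interpolation step. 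The paper's decomposition is more modular: it separates the intrinsic $h^{1/2}$ order of implicit Euler for Lipschitz perturbations of maximally monotone operators from the $O(h)$ effect of the explicit shifted gradient. Contrary to your opening heuristic, the time lag in $\nabla f(y_k)$ does not contribute to the $\sqrt h$. It only yields terms $Ch\norm{V}$, which are $O(h^2)$ after Young's inequality, consistent with the paper's $O(h)$ residual.

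There are two small slips to fix.
\begin{itemize}
\item Dividing your displayed inequality by $h$ gives $\norm{a_{k+1}^h}\le(1+L\beta h)\norm{a_k^h}+Lh\norm{v_k^h}$. The additive term is therefore $Ch$, not $C$; with $+C$, the discrete Gronwall over $T/h$ steps only yields $O(1/h)$.
\item The bound on $\norm{v_k^h}$ used in that recursion must be available first, not deduced ``consequently'' from the acceleration bound. You can take it from the discrete energy with $h\le h_{\mathrm{Lyap}}$, as the paper does, or run a joint Gronwall on $\norm{a_k^h}+\norm{v_k^h}$.
\end{itemize}
You also need these bounds up to $T+h$ to cover the last grid interval. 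The $L^\infty(0,T)$ bound on the continuous selection $\xi$ comes from Theorem~\ref{thm:well-posedness}, which you do invoke.
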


\begin{proof}
Set $z=(x,v)$, $z_k^h=(x_k^h,v_k^h)$, and
$$
g(x,v)=\nabla f(x+\beta v).
$$
On the phase space, use the operators
$$
\mathcal A(x,v)=\{0\}\times\partial\phi(v),
\qquad
\mathcal B(x,v)=\bigl(-v,\gamma v+g(x,v)\bigr).
$$
The operator $\mathcal A$ is maximally monotone and, under
Assumption~\ref{ass:discrete-smoothness}, $\mathcal B$ is globally
Lipschitz continuous.

We first establish a uniform consistency bound. By
Theorem~\ref{thm:lyapunov-certificate}, $M_h\succeq0$ for every sufficiently
small $h$. The discrete energy estimate then gives a constant $V>0$,
independent of such $h$, for which
$$
\sup_{k\geq0}\norm{v_k^h}\leq V.
$$
Choose $\xi_0\in\partial\phi(v_0)$ and set
$\mathcal M=\gamma\Id+\partial\phi$. The first velocity update can be written
$$
v_1^h=(\Id+h\mathcal M)^{-1}(v_0-hg(x_0,v_0)).
$$
Since
$$
v_0=(\Id+h\mathcal M)^{-1}
\bigl(v_0+h(\gamma v_0+\xi_0)\bigr),
$$
the nonexpansiveness of the resolvent gives
\begin{equation}
\label{eq:borne-prem-acc-disc}
\norm{a_1^h}
\leq\norm{g(x_0,v_0)+\gamma v_0+\xi_0}.
\end{equation}

For $k\geq1$, select $\xi_{k+1}^h\in\partial\phi(v_{k+1}^h)$ and
$\xi_k^h\in\partial\phi(v_k^h)$ from the discrete equations at indices $k$
and $k-1$, respectively. Subtracting these equations, using the
monotonicity of $\partial\phi$, and testing in the direction of
$v_{k+1}^h-v_k^h$ gives
\begin{align*}
\norm{a_{k+1}^h}-\norm{a_k^h}
&\leq\norm{g(x_k^h,v_k^h)-g(x_{k-1}^h,v_{k-1}^h)}\\
&\leq Lh\left(\norm{v_k^h}+\beta\norm{a_k^h}\right).
\end{align*}
The same inequality is immediate when $v_{k+1}^h=v_k^h$. The discrete
Gronwall lemma and \eqref{eq:borne-prem-acc-disc} therefore
give, for every $T>0$,
\begin{equation}
\sup_{0\leq k h\leq T}\norm{a_{k+1}^h}\leq A_T,
\end{equation}
where $A_T$ is independent of small $h$. Consequently,
\begin{equation}
\label{eq:uniform-phase-increments}
\norm{z_{k+1}^h-z_k^h}\leq C_T h
\qquad (0\leq kh\leq T).
\end{equation}

The discrete system can now be viewed as an implicit Euler approximation
with a vanishing residual:
\begin{equation}
\label{eq:perturbed-implicit-euler}
\frac{z_{k+1}^h-z_k^h}{h}
+{\mathcal A(z_{k+1}^h)}+{\mathcal B(z_{k+1}^h)}
\ni R_{k+1}^h,
\end{equation}
where
$$
R_{k+1}^h
=\left(0,
g(x_{k+1}^h,v_{k+1}^h)-g(x_k^h,v_k^h)
\right).
$$
By \eqref{eq:uniform-phase-increments} and the Lipschitz continuity of $g$,
\begin{equation}
\label{eq:vanishing-euler-residual}
\sup_{0\leq kh\leq T}\norm{R_{k+1}^h}\leq C_T h.
\end{equation}

We now prove the required implicit Euler estimate in the present setting.
This also makes explicit why the maximal-monotone order is $h^{1/2}$.
In the estimates already obtained above, replace $T$ by $T+1$ and restrict
to $0<h\leq1$; this covers the last grid interval intersecting $[0,T]$.
Set
$$
\mathcal C=\mathcal A+\mathcal B
$$
and let $\ell\geq0$ be a global Lipschitz constant of $\mathcal B$. For
$w_i\in\Dom(\mathcal C)$ and $c_i\in{\mathcal C(w_i)}$, monotonicity of
$\mathcal A$ gives
\begin{equation}
\label{eq:quasi-monotonicity-euler}
\ip{c_1-c_2}{w_1-w_2}
\geq-\ell\norm{w_1-w_2}^2.
\end{equation}
Moreover, if $h\ell<1$, then $\Id+h\mathcal C$ is onto and its inverse is
single-valued. Indeed, for each $u$ the map
$$
w\longmapsto
J_{h\mathcal A}\bigl(u-h{\mathcal B(w)}\bigr),
\qquad
J_{h\mathcal A}=(\Id+h\mathcal A)^{-1},
$$
is a contraction with constant $h\ell$. Thus the unperturbed implicit Euler
sequence $\widehat z_0=z_0$,
\begin{equation}
\label{eq:euler-aux-nonpert}
\frac{\widehat z_{k+1}-\widehat z_k}{h}
+\widehat c_{k+1}=0,
\qquad
\widehat c_{k+1}\in{\mathcal C(\widehat z_{k+1})},
\end{equation}
is well defined for all sufficiently small $h$.

We first record uniform bounds for the continuous and discrete
phase-space velocities. Since $z_0\in\Dom(\mathcal C)$, choose
$c_0\in{\mathcal C(z_0)}$. If $c(t)=-\dot z(t)\in{\mathcal C(z(t))}$, then
\eqref{eq:quasi-monotonicity-euler}, applied to $(z(t),c(t))$ and
$(z_0,c_0)$, gives
$$
\frac{d}{dt}\norm{z(t)-z_0}
\leq\norm{c_0}+\ell\norm{z(t)-z_0}
$$
for almost every $t$ at which $z(t)\ne z_0$; the general case follows by
regularizing the norm. Hence
$$
\norm{z(s)-z_0}\leq s e^{\ell s}\norm{c_0}.
$$
The stability estimate obtained from
\eqref{eq:quasi-monotonicity-euler} for two solutions, applied to the time
translates $t\mapsto z(t+s)$ and $t\mapsto z(t)$, then yields
$$
\norm{z(t+s)-z(t)}
\leq e^{\ell t}\norm{z(s)-z_0}
\leq s e^{\ell(t+s)}\norm{c_0}.
$$
Consequently, $z$ is Lipschitz continuous on bounded intervals and
$$
\norm{c(t)}=\norm{\dot z(t)}\leq e^{\ell T}\norm{c_0}
\quad\text{for almost every }t\in[0,T].
$$
On the discrete side, comparison of
$(\widehat z_{k+1},\widehat c_{k+1})$ and
$(\widehat z_k,\widehat c_k)$ in
\eqref{eq:quasi-monotonicity-euler}, together with
$\widehat z_{k+1}-\widehat z_k=-h\widehat c_{k+1}$, gives
$$
(1-h\ell)\norm{\widehat c_{k+1}}
\leq\norm{\widehat c_k}.
$$
The same argument at the first step uses $(z_0,c_0)$. Therefore, if
$h\ell\leq1/2$, there exists $M_T>0$, independent of $h$, such that
\begin{equation}
\label{eq:borne-phase-cont-disc}
\norm{c(t)}\leq M_T,
\qquad
\norm{\widehat c_{k+1}}\leq M_T
\end{equation}
whenever $0\leq t\leq T+1$ and $0\leq kh\leq T+1$.

We next compare the unperturbed Euler sequence with the exact trajectory at
the grid points. Put $e_k=\widehat z_k-z(t_k)$. Since
$$
e_{k+1}-e_k
=-\int_{t_k}^{t_{k+1}}
\bigl(\widehat c_{k+1}-c(t)\bigr)\,dt,
$$
we have
$$
\frac12\left(
\norm{e_{k+1}}^2-\norm{e_k}^2
+\norm{e_{k+1}-e_k}^2
\right)
=-\int_{t_k}^{t_{k+1}}
\ip{\widehat c_{k+1}-c(t)}{e_{k+1}}\,dt.
$$
For $t\in[t_k,t_{k+1}]$, write
$$
e_{k+1}
=\widehat z_{k+1}-z(t)+z(t)-z(t_{k+1}).
$$
Using 
\eqref{eq:quasi-monotonicity-euler} and
\eqref{eq:borne-phase-cont-disc}, we obtain
\begin{align*}
-\ip{\widehat c_{k+1}-c(t)}{e_{k+1}}
&\leq
\ell\norm{\widehat z_{k+1}-z(t)}^2
+\norm{\widehat c_{k+1}-c(t)}
 \norm{z(t)-z(t_{k+1})}\\
&\leq
2\ell\norm{e_{k+1}}^2
+2\ell M_T^2h^2+2M_T^2h.
\end{align*}
After integration and omission of the nonnegative increment term, this gives
$$
(1-4\ell h)\norm{e_{k+1}}^2
\leq
\norm{e_k}^2+4M_T^2(1+\ell h)h^2.
$$
Since $e_0=0$, the discrete Gronwall lemma yields, for
$4\ell h\leq1/2$,
\begin{equation}
\label{eq:unperturbed-euler-sqrt-h}
\max_{0\leq kh\leq T+h}\norm{\widehat z_k-z(t_k)}
\leq C_T\sqrt h.
\end{equation}

It remains to control the residual in
\eqref{eq:perturbed-implicit-euler}. Select
$c_{k+1}^h\in{\mathcal C(z_{k+1}^h)}$ so that
$$
\frac{z_{k+1}^h-z_k^h}{h}+c_{k+1}^h=R_{k+1}^h
$$
and set $d_k=z_k^h-\widehat z_k$. Subtracting
\eqref{eq:euler-aux-nonpert}, pairing with $d_{k+1}$, and using
\eqref{eq:quasi-monotonicity-euler} give
$$
(1-h\ell)\norm{d_{k+1}}
\leq\norm{d_k}+h\norm{R_{k+1}^h}.
$$
Because $d_0=0$, iteration of this inequality shows that
\begin{equation}
\label{eq:stab-res-euler-pert}
\max_{0\leq kh\leq T+h}\norm{z_k^h-\widehat z_k}
\leq C_T\sum_{0\leq kh\leq T}h\norm{R_{k+1}^h}.
\end{equation}
Let $\widehat z_h$ denote the piecewise affine interpolant of
$(\widehat z_k)$. Estimate
\eqref{eq:borne-phase-cont-disc} shows that both
$\widehat z_h$ and $z$ move by at most $M_Th$ inside a grid interval.
Combining 
\eqref{eq:unperturbed-euler-sqrt-h} and
\eqref{eq:stab-res-euler-pert}, and observing that the
difference between the affine interpolants is bounded by the maximum nodal
difference, we obtain
\begin{equation}
\label{eq:borne-euler-pert}
\sup_{0\leq t\leq T}\norm{z_h(t)-z(t)}
\leq C_T\left(
\sqrt h+\int_0^{T+h}\norm{R_h(s)}\,ds
\right),
\end{equation}
where $R_h=R_{k+1}^h$ on $(t_k,t_{k+1}]$. Finally,
\eqref{eq:vanishing-euler-residual} makes the integral in
\eqref{eq:borne-euler-pert} of order $h$. This proves
\eqref{eq:err-disc-hor-fin}.

\end{proof}

We next show that the approximation remains valid when the trajectory
becomes permanently stationary. For each $h>0$, define
the permanent stopping index
$$
K_{\mathrm{stop}}^h
=\inf\left\{K\in\N:\ v_k^h=0\text{ for every }k\geq K\right\}.
$$

\begin{theorem}
Under the hypotheses of Theorem~\ref{thm:vanishing-stepsize}, suppose that
the continuous trajectory has a finite permanent stopping time
$T_{\mathrm{stop}}>0$ and satisfies the strict interior condition
\begin{equation}
\label{eq:cond-stricte-h}
-\nabla f(x(T_{\mathrm{stop}}))+\delta\B
\subseteq\partial\phi(0)
\end{equation}
for some $\delta>0$. Then $K_{\mathrm{stop}}^h<+\infty$ for every
sufficiently small $h$. Moreover, there exist $C>0$ and $h_0>0$ such that
\begin{equation}
\left|hK_{\mathrm{stop}}^h-T_{\mathrm{stop}}\right|
+\norm{x_{K_{\mathrm{stop}}^h}^h-x(T_{\mathrm{stop}})}
\leq C\sqrt h
\end{equation}
for every $0<h\leq h_0$. In particular,
$$
hK_{\mathrm{stop}}^h\longrightarrow T_{\mathrm{stop}},
\qquad
x_{K_{\mathrm{stop}}^h}^h\longrightarrow x(T_{\mathrm{stop}}).
$$
The approximation estimate also holds on $[0,+\infty[$:
\begin{equation}
\label{eq:suivi-global-cont-disc}
\sup_{t\geq0}
\left(
\norm{x_h(t)-x(t)}+\norm{v_h(t)-v(t)}
\right)
\leq C\sqrt h.
\end{equation}
\end{theorem}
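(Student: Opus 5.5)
The plan follows the continuous stopping-time argument of Proposition~\ref{prop:stopping-localization} and Theorem~\ref{thm:stopping-shadowing}, with the vanishing-step estimate of Theorem~\ref{thm:vanishing-stepsize} playing the role of the first-order state comparison. Write $p(t)=-\nabla f(x(t)+\beta v(t))$ and $q_k^h=-\nabla f(y_k^h)$. Since $v\equiv0$ after $\Tstop$, we have $p(t)=-\nabla f(x(\Tstop))$ there. By continuity of $p$ and \eqref{eq:cond-stricte-h}, choose $\tau>0$ and $\varepsilon>0$ with $p(t)+3\varepsilon\B\subseteq\Cfr$ on $[\Tstop-\tau,\Tstop+\tau]$. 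Apply Theorem~\ref{thm:vanishing-stepsize} on $T=\Tstop+\tau+1$. Using the global $L$-Lipschitz continuity of $\nabla f$, this gives $\norm{q_k^h-p(t_k)}\leq L(1+\beta)C_T\sqrt h$ for $t_k\leq T$. Hence, for small $h$, $q_k^h+2\varepsilon\B\subseteq\Cfr$, that is $\rho_k^h\geq 2\varepsilon$, for every $t_k$ in that window.

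\textbf{Upper bound on $hK_{\mathrm{stop}}^h$.} Take $k_0$ as the first index with $t_{k_0}\geq \Tstop$. Then $\norm{v_{k_0}^h}\leq\norm{v(t_{k_0})}+C\sqrt h=C\sqrt h$. Theorem~\ref{thm:arret-disc-quant} with $\rho_k\geq2\varepsilon$ gives permanent stopping within
\[
\left\lceil\log\bigl(1+\gamma C\sqrt h/(2\varepsilon)\bigr)/\log(1+h\gamma)\right\rceil
\]
further steps. In physical time this is $O(\sqrt h)+h$, since $\log(1+h\gamma)\geq h\gamma/2$ for small $h$. This quantity is below $\tau$, so the margin is available throughout. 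It proves $K_{\mathrm{stop}}^h<+\infty$ and $hK_{\mathrm{stop}}^h\leq\Tstop+C\sqrt h$.

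\textbf{Lower bound.} I will exclude early stopping exactly as in Proposition~\ref{prop:stopping-localization}. The continuous pair $(v(t),p(t))$ stays at distance $d_\tau>0$ from $\mathcal S=\{0\}\times\Cfr$ on $[0,\Tstop-\tau]$, by the exact sticking criterion. The discrete pair $(v_k^h,q_k^h)$ is $C\sqrt h$-close to it. At a permanent stopping index $j$ we have $v_j^h=0$ and $-\nabla f(x_j^h)\in\Cfr$ by Proposition~\ref{prop:exact-discrete-sticking}, with $y_j=x_j$. Hence stopping before $\Tstop-\tau$ is impossible for small $h$. If instead $hK^h_{\mathrm{stop}}\in(\Tstop-\tau,\Tstop)$, I use the continuous extinction inequality \eqref{eq:continuous-speed-decay} with margin $3\varepsilon$. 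It gives $\norm{v(t)}\geq3\varepsilon(\Tstop-t)$, while $\norm{v(hK^h_{\mathrm{stop}})}\leq\norm{v_h(hK^h_{\mathrm{stop}})}+C\sqrt h=C\sqrt h$. Together these yield $\Tstop-hK^h_{\mathrm{stop}}\leq C\sqrt h$.

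\textbf{Positions and global estimate.} Fix $T_+=\Tstop+\tau$. For small $h$, both trajectories are constant after $T_+$: the continuous one at $x(\Tstop)$, and the discrete interpolants at $x^h_{K_{\mathrm{stop}}^h}$ with $v_h\equiv0$. This holds because $v_k^h=0$ for $k\geq K^h_{\mathrm{stop}}$, so $x_h$ is frozen and $v_h$ vanishes on those intervals. Therefore the supremum over $[0,+\infty[$ equals the supremum over $[0,T_+]$, which is $O(\sqrt h)$ by Theorem~\ref{thm:vanishing-stepsize}. This gives \eqref{eq:suivi-global-cont-disc}. The stopping positions satisfy $x^h_{K^h_{\mathrm{stop}}}=x_h(T_+)$ and $x(\Tstop)=x(T_+)$, so their distance is also $O(\sqrt h)$.

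The main obstacle is the bookkeeping at grid resolution. One has to make sure the discrete margin holds at every index used in Theorem~\ref{thm:arret-disc-quant}. The force there is evaluated at $y_k^h$ rather than $x_k^h$, which costs only $\beta C\sqrt h$. One must also handle the $O(h)$ offset between $\Tstop$ and the first grid point $t_{k_0}$, and use the affine interpolant rather than nodal values when comparing with $v(hK^h_{\mathrm{stop}})$. All of these are absorbed into $C\sqrt h$.
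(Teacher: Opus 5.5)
Your proposal is correct and follows the paper's proof essentially step for step: the $3\varepsilon$ tube around $\Tstop$, the $C\sqrt h$ force comparison, exclusion of early stops via the distance to $\{0\}\times\partial\phi(0)$, the discrete extinction iteration from the first grid point after $\Tstop$, the backward continuous extinction bound, and freezing both trajectories after $\Tstop+\tau$. The only cosmetic difference is in the stopping-position estimate. You read it off the frozen states at $T_+$, whereas the paper combines the Lipschitz continuity of $x$ with the stopping-time bound; both arguments work.
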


\begin{proof}
For brevity, set $T_*=T_{\mathrm{stop}}$ and
$$
p(t)=-\nabla f(x(t)+\beta v(t)),
\qquad
q_k^h=-\nabla f(x_k^h+\beta v_k^h).
$$
Since the continuous trajectory is constant after $T_*$, condition
\eqref{eq:cond-stricte-h} and continuity provide
$0<\tau<T_*$ and $\varepsilon>0$ such that
\begin{equation}
\label{eq:tube-arret-cont-h}
p(t)+3\varepsilon\B\subseteq\partial\phi(0)
\qquad (T_*-\tau\leq t\leq T_*+\tau).
\end{equation}
Theorem~\ref{thm:vanishing-stepsize} and the Lipschitz continuity of
$\nabla f$ give
\begin{equation}
\label{eq:approx-force-disc}
\sup_{0\leq kh\leq T_*+\tau}
\left(
\norm{v_k^h-v(kh)}+\norm{q_k^h-p(kh)}
\right)
\leq C\sqrt h.
\end{equation}

We first show that the discrete sequence cannot stop far before
$T_*$. Set
$\mathcal S=\{0\}\times\partial\phi(0)$. For $t<T_*$,
$(v(t),p(t))\notin\mathcal S$; otherwise the exact sticking criterion would
make the continuous trajectory constant from $t$. Hence
$$
d_\tau
=\min_{0\leq t\leq T_*-\tau}
\dist\bigl((v(t),p(t)),\mathcal S\bigr)>0.
$$
For small $h$, estimate \eqref{eq:approx-force-disc} shows that
$(v_k^h,q_k^h)$ stays at distance at least $d_\tau/2$ from $\mathcal S$
whenever $kh\leq T_*-\tau$. At a permanent discrete stop, however,
Proposition~\ref{prop:exact-discrete-sticking} gives
$(v_{K_{\mathrm{stop}}^h}^h,q_{K_{\mathrm{stop}}^h}^h)\in\mathcal S$.
Thus no permanent discrete stop can occur before $T_*-\tau$.

We next prove that a stop does occur shortly after $T_*$. Let
$k_0=\lceil T_*/h\rceil$. Then $v(k_0h)=0$, and
\eqref{eq:approx-force-disc} gives
\begin{equation}
\norm{v_{k_0}^h}\leq C\sqrt h.
\end{equation}
For all nodes in $[T_*-\tau,T_*+\tau]$, after decreasing $h_0$,
\eqref{eq:tube-arret-cont-h} and
\eqref{eq:approx-force-disc} imply
$$
q_k^h+2\varepsilon\B\subseteq\partial\phi(0).
$$
Define
$$
n_h=\left\lceil
\frac{\log(1+\gamma\norm{v_{k_0}^h}/\varepsilon)}
{\log(1+h\gamma)}
\right\rceil.
$$
For small $h$, the elementary bounds
$\log(1+h\gamma)\geq h\gamma/2$ and
$\log(1+s)\leq s$ give
\begin{equation}
hn_h\leq h+\frac{2}{\varepsilon}\norm{v_{k_0}^h}
\leq C\sqrt h.
\end{equation}
Thus all indices from $k_0$ to $k_0+n_h$ remain in the interval on
which the uniform interior inclusion holds. Iterating the discrete speed inequality
\eqref{eq:discrete-speed-decay} over these indices, exactly as in the proof
of Theorem~\ref{thm:arret-disc-quant}, forces a zero no later
than $k_0+n_h$. At the first zero, the force is in the interior of
$\partial\phi(0)$, so Proposition~\ref{prop:exact-discrete-sticking} makes
the zero permanent. This proves $K_{\mathrm{stop}}^h<+\infty$ and
$$
hK_{\mathrm{stop}}^h-T_*\leq C\sqrt h
$$
whenever the left-hand side is nonnegative.

Suppose now that $hK_{\mathrm{stop}}^h<T_*$. The previous exclusion places
this time in $[T_*-\tau,T_*)$. On this interval, the continuous velocity
cannot vanish before $T_*$, and the scalar extinction inequality integrated
backward from $v(T_*)=0$ gives
$$
\norm{v(t)}
\geq\frac{3\varepsilon}{\gamma}
\left(e^{\gamma(T_*-t)}-1\right)
\geq3\varepsilon(T_*-t).
$$
At $t=hK_{\mathrm{stop}}^h$, the discrete velocity is zero, while
\eqref{eq:approx-force-disc} gives
$\norm{v(t)}\leq C\sqrt h$. Therefore
$$
T_*-hK_{\mathrm{stop}}^h\leq C\sqrt h.
$$
This completes the stopping-time estimate.

Since $x$ is Lipschitz continuous on bounded intervals, Theorem
\ref{thm:vanishing-stepsize} and the stopping-time estimate imply
$$
\norm{x_{K_{\mathrm{stop}}^h}^h-x(T_*)}
\leq C\sqrt h.
$$
Finally, both trajectories are constant after their respective
stopping times. The estimate on $[0,T_*+\tau]$, followed by the estimate of
the stopping positions for later times, proves
\eqref{eq:suivi-global-cont-disc}.
\end{proof}

\subsection{{Comparison of the Explicit and Implicit
Hessian-Driven Discretizations}}
\label{sec:discrete-shadowing}

For a nonlinear function, the gradient at the shifted point and the
backward difference of gradients are not identical. Their difference is of
order $O(\beta^2+\beta h)$ on bounded sets.

\begin{theorem}
\label{thm:discrete-eh-ih-shadowing}
Suppose that Assumptions~\ref{ass:basic}, \ref{ass:comparison}, and
\ref{ass:discrete-smoothness} hold. Fix $T>0$, $\bar\beta>0$, and
$\bar h>0$. Let $(x_k^E,v_k^E)$ and $(x_k^I,v_k^I)$ be generated by the EH
and IH recursions in \eqref{eq:three-discrete-updates}, with the same initial
data and with $x_{-1}^E=x_0-hv_0$. Then there exists $C_T>0$, independent of
$\beta$ and $h$, such that
\begin{equation}
\label{eq:discrete-eh-ih-shadowing}
\max_{0\leq kh\leq T}
\left(
\norm{x_k^I-x_k^E}+\norm{v_k^I-v_k^E}
\right)
\leq C_T\left(\beta^2+\beta h\right)
\end{equation}
for every $0\leq\beta\leq\bar\beta$ and
$0<h\leq\bar h$. If $f$ is quadratic, the two complete discrete
trajectories coincide for every $\beta\geq0$ and $h>0$.
\end{theorem}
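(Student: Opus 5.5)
The plan is to treat both recursions as the same proximal update driven by different forces $G_k^I$ and $G_k^E$. I would split the force difference into a consistency defect, evaluated along one trajectory, plus a Lipschitz stability term. Then I would close the argument with a discrete Gronwall inequality, using the nonexpansiveness of $\prox_{\lambda_h\phi}$.

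\textbf{Step 1: uniform bounds.} First I need bounds on both trajectories over $0\le kh\le T$, uniform in $\beta\le\bar\beta$ and $h\le\bar h$.

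For IH, the recursion is a Lipschitz perturbation of the resolvent: $\norm{v_{k+1}^I}\le a_h\norm{v_k^I}+\lambda_h\norm{\nabla f(x_k^I+\beta v_k^I)}$. Since $\nabla f$ is globally $L$-Lipschitz, $\norm{\nabla f(y)}\le\norm{\nabla f(x_0)}+L\norm{y-x_0}$. A discrete Gronwall argument on $\norm{x_k-x_0}+\norm{v_k}$ then gives a bound $B_T$. This holds even when $h$ exceeds $h_{\mathrm{Lyap}}$, so it is independent of the Lyapunov condition.

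For EH, $\norm{G_k^E}\le\norm{\nabla f(x_k^E)}+\beta L\norm{v_k^E}$ because $x_k^E-x_{k-1}^E=hv_k^E$ (including $k=0$ via the ghost point). The same Gronwall argument applies.

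All relevant states, including the shifted points $x_k^I+\beta v_k^I$ and the segments between $x_{k-1}^E$ and $x_k^E$, therefore lie in a fixed ball $B$. On $B$, Assumption~\ref{ass:comparison} supplies a Hessian bound $H$ and a Hessian Lipschitz constant $M$.

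\textbf{Step 2: consistency defect.} Along the EH trajectory I compare $\nabla f(x_k+\beta v_k)$ with $G_k^E$, writing $x_k=x_k^E$ and $v_k=v_k^E$. Lemma~\ref{lem:taylor-defect} gives
\[
\nabla f(x_k+\beta v_k)=\nabla f(x_k)+\beta\nabla^2 f(x_k)v_k+O(M\beta^2\norm{v_k}^2).
\]
Taylor's formula backward from $x_k$ gives
\[
\nabla f(x_k)-\nabla f(x_{k-1})=h\nabla^2f(x_k)v_k+O(Mh^2\norm{v_k}^2).
\]
Hence $\frac{\beta}{h}(\nabla f(x_k)-\nabla f(x_{k-1}))=\beta\nabla^2f(x_k)v_k+O(\beta h)$.

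Subtracting, I get the defect bound
\[
\norm{\nabla f(x_k^E+\beta v_k^E)-G_k^E}\le C(\beta^2+\beta h).
\]
The first-order Hessian terms cancel exactly. When $f$ is quadratic, both remainders vanish identically, as already observed in Subsection~\ref{sec:quadratic-coincidence}. An induction on $k$ then shows the two trajectories coincide, which gives the final claim.

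\textbf{Step 3: stability and Gronwall.} Set $X_k=x_k^I-x_k^E$ and $V_k=v_k^I-v_k^E$. Nonexpansiveness of the prox gives
\[
\norm{V_{k+1}}\le a_h\norm{V_k}+\lambda_h\norm{G_k^I-G_k^E}.
\]
I split the force difference as
\[
G_k^I-G_k^E=\bigl[\nabla f(x_k^I+\beta v_k^I)-\nabla f(x_k^E+\beta v_k^E)\bigr]+\bigl[\nabla f(x_k^E+\beta v_k^E)-G_k^E\bigr].
\]
The first bracket is bounded by $L(\norm{X_k}+\bar\beta\norm{V_k})$, and the second by $C(\beta^2+\beta h)$ from Step 2. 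The position recursion gives $\norm{X_{k+1}}\le\norm{X_k}+h\norm{V_{k+1}}$.

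Let $e_k=\norm{X_k}+\norm{V_k}$. Combining the two recursions yields
\[
e_{k+1}\le(1+c h)e_k+Ch(\beta^2+\beta h).
\]
Since $e_0=0$, this gives $e_k\le C e^{cT}(\beta^2+\beta h)T$ for $kh\le T$, which is the claimed estimate.

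\textbf{Main obstacle.} I expect Step 2 to be the delicate part: getting uniform constants for the backward-difference Taylor remainder. This needs $x_{k-1}^E$ in the ball $B$, including $x_{-1}^E=x_0-hv_0$ at $k=0$, and it needs the velocity bound from Step 1 to be independent of $h$ without assuming $h\le h_{\mathrm{Lyap}}$. I would handle this with the crude Gronwall bounds of Step 1 rather than the energy estimate.
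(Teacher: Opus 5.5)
Your proof is correct, and its skeleton is the paper's. Both arguments use uniform bounds from prox nonexpansiveness plus discrete Gronwall, then a Taylor defect of size $\beta(\beta+h)\norm{v}^2$, then Gronwall on $\norm{X_k}+\norm{V_k}$.

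The genuine difference is the direction of the splitting. The paper writes
\[
G_k^I-G_k^E=\mathcal R_{\beta,h}(x_k^I,v_k^I)+\bigl[\mathcal G^E_{\beta,h}(x_k^I,v_k^I)-\mathcal G^E_{\beta,h}(x_k^E,v_k^E)\bigr].
\]
It therefore evaluates the defect along the IH trajectory. It must then prove a phase-space Lipschitz estimate for the explicit force $\mathcal G^E_{\beta,h}(x,v)=\nabla f(x)+\beta\int_0^1\nabla^2f(x-shv)v\,ds$, which uses the Hessian Lipschitz constant $M_T$ and the velocity bound a second time.

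You instead evaluate the defect along the EH trajectory. You then use the stability of the shifted-gradient force, which is globally Lipschitz with constants $L$ and $L\bar\beta$ at no cost. As a result, Assumption~\ref{ass:comparison} enters your argument only through the consistency defect. Similarly, your bound $\norm{G_k^E}\le\norm{\nabla f(x_k^E)}+\beta L\norm{v_k^E}$, obtained from $\norm{\nabla f(x_k^E)-\nabla f(x_{k-1}^E)}\le Lh\norm{v_k^E}$, is a bit more direct than the paper's use of $\norm{\nabla^2 f}\le L$.

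One small adjustment in Step~1: in your route the Hessian constants are needed at the EH shifted points $x_k^E+\beta v_k^E$, not the IH ones. These lie in the same ball anyway, since both trajectories obey the same bound.
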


\begin{proof}
We first obtain bounds that are uniform with respect to
$0\leq\beta\leq\bar\beta$ and $0<h\leq\bar h$ on the fixed physical
horizon. Since $0$ minimizes $\phi$, one has
$\prox_{\lambda\phi}(0)=0$. The nonexpansiveness of the proximal mapping,
$a_h\leq1$, and $\lambda_h\leq h$ give
\begin{equation}
\label{eq:borne-vit-suivi-disc}
\norm{v_{k+1}^J}
\leq\norm{v_k^J}+h\norm{G_k^J},
\qquad J\in\{E,I\}.
\end{equation}
For EH, the identity $x_k^E-x_{k-1}^E=hv_k^E$, including at $k=0$ by the
choice of the ghost point, gives
\begin{equation}
\label{eq:eh-integral-force}
G_k^E
=\nabla f(x_k^E)
+\beta\int_0^1
\nabla^2f(x_k^E-shv_k^E)v_k^E\,ds.
\end{equation}
Because $\nabla f$ is globally $L$-Lipschitz continuous,
$\norm{\nabla^2f(x)}\leq L$. Hence, for both $J=E$ and $J=I$,
$$
\norm{G_k^J}
\leq c_0+L\norm{x_k^J}+\bar\beta L\norm{v_k^J},
$$
where $c_0=\norm{\nabla f(0)}$. Combining this estimate with
$x_{k+1}^J=x_k^J+hv_{k+1}^J$ and
\eqref{eq:borne-vit-suivi-disc}, the discrete Gronwall
lemma yields a constant $R_T>0$ such that
\begin{equation}
\label{eq:bornes-disc-eh-ih}
\max_{J\in\{E,I\}}
\max_{0\leq kh\leq T+h}
\left(\norm{x_k^J}+\norm{v_k^J}\right)
\leq R_T.
\end{equation}

For $x,v\in\Hc$, introduce the two forces
$$
\mathcal G_\beta^I(x,v)=\nabla f(x+\beta v),\qquad
\mathcal G_{\beta,h}^E(x,v)=\nabla f(x)
+\beta\int_0^1\nabla^2f(x-shv)v\,ds.
$$
All points occurring in these integrals along the trajectories belong to a
fixed bounded ball determined by $R_T$, $\bar\beta$, and $\bar h$. Let
$M_T$ be a Lipschitz constant of $\nabla^2f$ on that ball. The integral
Taylor formula gives the exact defect representation
\begin{align}
\mathcal R_{\beta,h}(x,v)
&:=\mathcal G_\beta^I(x,v)-\mathcal G_{\beta,h}^E(x,v)\notag\\
&=\beta\int_0^1
\left(
\nabla^2f(x+s\beta v)-\nabla^2f(x-shv)
\right)v\,ds.
\end{align}
Therefore
\begin{equation}
\label{eq:borne-defaut-force}
\norm{\mathcal R_{\beta,h}(x,v)}
\leq\frac{M_T}{2}\beta(\beta+h)\norm{v}^2.
\end{equation}

On the same bounded set, the explicit force is uniformly
Lipschitz in the phase variables. Indeed, for
$(x_i,v_i)$, $i=1,2$, one obtains from the integral representation
\begin{align*}
\norm{\mathcal G_{\beta,h}^E(x_1,v_1)
-\mathcal G_{\beta,h}^E(x_2,v_2)}
&\leq L\norm{x_1-x_2}
+\beta L\norm{v_1-v_2}
+\beta M_T\norm{v_2}
\left(\norm{x_1-x_2}+h\norm{v_1-v_2}\right)\\
&\leq C_T
\left(\norm{x_1-x_2}+\norm{v_1-v_2}\right).
\end{align*}
Set
$$
X_k=x_k^I-x_k^E,
\qquad
V_k=v_k^I-v_k^E.
$$
Combining the preceding Lipschitz estimate with
\eqref{eq:borne-defaut-force} and
\eqref{eq:bornes-disc-eh-ih} gives
\begin{equation}
\label{eq:comp-force-disc}
\norm{G_k^I-G_k^E}
\leq C_T\left(
\norm{X_k}+\norm{V_k}+\beta^2+\beta h
\right).
\end{equation}

The two proximal updates use the same parameters $a_h$ and $\lambda_h$.
Their nonexpansiveness and \eqref{eq:comp-force-disc} imply
$$
\norm{V_{k+1}}
\leq(1+C_Th)\norm{V_k}
+C_Th\norm{X_k}
+C_Th(\beta^2+\beta h).
$$
Moreover,
$$
\norm{X_{k+1}}
\leq\norm{X_k}+h\norm{V_{k+1}}.
$$
After increasing $C_T$, these two inequalities yield
$$
\norm{X_{k+1}}+\norm{V_{k+1}}
\leq(1+C_Th)
\left(\norm{X_k}+\norm{V_k}\right)
+C_Th(\beta^2+\beta h).
$$
Since $X_0=V_0=0$, the discrete Gronwall lemma proves
\eqref{eq:discrete-eh-ih-shadowing}.

If $f$ is quadratic, its Hessian is constant and
$\mathcal R_{\beta,h}\equiv0$. The two forces agree whenever the phase
variables agree, and induction from the common initial data gives
$(x_k^I,v_k^I)=(x_k^E,v_k^E)$ for every $k$.
\end{proof}

\begin{remark}
The two terms in \eqref{eq:discrete-eh-ih-shadowing} have different origins.
The term $\beta^2$ is the Taylor remainder of the shifted gradient,
whereas $\beta h$ is the temporal consistency defect of the backward
gradient difference. In particular, if $h=O(\beta)$, the two
discretizations differ by $O(\beta^2)$ on every bounded physical time
interval.
\end{remark}

\subsection{{Exact Modal Stability of the Frictionless Quadratic
Iteration}}
\label{sec:quadratic-modal-analysis}

Let $Q:\Hc\to\Hc$ be bounded, self-adjoint, and positive definite, and set
$$
f(x)=\frac12\ip{Qx}{x}-\ip{b}{x},
\qquad
0<\mu\Id\preceq Q\preceq L\Id.
$$
For an exact uniform boundary, $L$ is taken to be
$\max\sigma(Q)=\norm{Q}$. If a larger Lipschitz bound is used instead, the
same formulas remain sufficient but need not be sharp.
We first consider $\phi\equiv0$. The resulting iteration is linear, so
the spectral modes of $Q$ can be studied separately. The calculation gives
the exact Schur boundary for the quadratic iteration without dry friction.
The translation $z=x-Q^{-1}b$ removes the affine term. The scalar symbol
associated with $q\in\sigma(Q)$ is
$$
T_q=
\begin{pmatrix}
1-\dfrac{h^2q}{1+h\gamma}
& \dfrac{h(1-hq\beta)}{1+h\gamma}\\[1.1em]
-\dfrac{hq}{1+h\gamma}
& \dfrac{1-hq\beta}{1+h\gamma}
\end{pmatrix}.
$$
We call the quadratic iteration uniformly Schur stable when
$$
\sup_{q\in\sigma(Q)}\rho(T_q)<1.
$$

For dry friction, let $\phi(v)=r\norm{v}$ and set
$$
w_k=(\Id-h\beta Q)v_k-hQz_k.
$$
The proximal update becomes
$$
v_{k+1}
=\frac{1}{1+h\gamma}
\left(1-\frac{hr}{\norm{w_k}}\right)_{+}w_k,
\qquad
z_{k+1}=z_k+hv_{k+1}.
$$
Here $v_{k+1}=0$ when $w_k=0$.
It is nonlinear, and its equilibrium set is
$$
\mathcal E_r=\{(z,0):\norm{Qz}\leq r\}.
$$
In dimension greater than one, the radial proximal step couples the spectral
modes. Thus the matrix $T_q$ and the boundary $h_{\mathrm{quad}}$ describe
only the iteration without dry friction. Convergence with dry friction is
given by Theorems~\ref{thm:discrete-convergence}
and~\ref{thm:arret-disc-quant}.

Although the uniform result below assumes $Q\succeq\mu\Id$ with $\mu>0$,
the scalar classification includes $q=0$ to show why a zero mode
prevents Schur stability.

\begin{theorem}
\label{thm:classif-modale}
Let $q\geq0$, $h>0$, and $\gamma,\beta\geq0$.
\begin{enumerate}[label=\textup{(\roman*)},leftmargin=2.5em]
\item If $q>0$ and $\gamma+q\beta>0$, then $T_q$ is Schur stable if and
only if
\begin{equation}
\label{eq:quadratic-stability}
qh(h+2\beta)<4+2h\gamma.
\end{equation}
At equality, $-1$ is an eigenvalue and the other eigenvalue lies strictly
inside the unit disk. If the left-hand side is larger, $T_q$ has a real
eigenvalue strictly smaller than $-1$.
\item If $q=0$, the eigenvalues are $1$ and $(1+h\gamma)^{-1}$. Hence the
mode is never Schur stable. For $\gamma>0$ it is semistable; for
$\gamma=0$, the matrix is a nontrivial Jordan block at $1$.
\item If $q>0$ and $\gamma=\beta=0$, then $T_q$ is not Schur stable for
any $h>0$. It is marginally stable for $0<h<2/\sqrt q$. At
$h=2/\sqrt q$, it has a defective double eigenvalue $-1$, and for
$h>2/\sqrt q$ its spectral radius is larger than one.
\end{enumerate}
\end{theorem}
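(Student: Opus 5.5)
The plan is to compute the trace and determinant of $T_q$ in closed form and then apply the Jury (Schur–Cohn) criterion for real $2\times2$ matrices. That criterion says $T_q$ is Schur stable if and only if its characteristic polynomial $p(\lambda)=\lambda^2-(\operatorname{tr}T_q)\lambda+\det T_q$ satisfies $p(1)>0$, $p(-1)>0$ and $|\det T_q|<1$.

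Set $s=1+h\gamma$. Expanding the entries, the two cross terms $\mp h^2q(1-hq\beta)/s^2$ cancel, which gives
$$
\det T_q=\frac{1-hq\beta}{s},
\qquad
\operatorname{tr}T_q=\frac{s+1-h^2q-hq\beta}{s}.
$$
From these,
$$
p(1)=\frac{h^2q}{s},
\qquad
p(-1)=\frac{4+2h\gamma-qh(h+2\beta)}{s}.
$$

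For (i), take $q>0$, so $p(1)>0$ automatically. The upper bound $\det T_q<1$ is equivalent to $h(\gamma+q\beta)>0$, which is exactly the hypothesis $\gamma+q\beta>0$. The condition $p(-1)>0$ is \eqref{eq:quadratic-stability}. It also implies $2qh\beta<4+2h\gamma$, hence $hq\beta<2+h\gamma$, which is the lower bound $\det T_q>-1$. Therefore Schur stability is equivalent to \eqref{eq:quadratic-stability}.

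The boundary cases of (i) follow from the shape of $p$. At equality $p(-1)=0$, so $-1$ is a root, and the other root equals $\det T_q/(-1)=-\det T_q$. Its modulus is strictly less than one: $\det T_q<1$ holds as before, and equality gives $hq\beta\le 2+h\gamma$ together with $qh^2>0$, so in fact $hq\beta<2+h\gamma$, i.e. $\det T_q>-1$. If $p(-1)<0$, then since $p(\lambda)\to+\infty$ as $\lambda\to-\infty$, the intermediate value theorem gives a real root strictly smaller than $-1$.

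For (ii), set $q=0$. The matrix becomes upper triangular, $\begin{pmatrix}1&h/s\\0&1/s\end{pmatrix}$, with eigenvalues $1$ and $1/s$. When $\gamma>0$ these are distinct, so the matrix is diagonalizable with spectrum in the closed disk, hence semistable. When $\gamma=0$ it is $\begin{pmatrix}1&h\\0&1\end{pmatrix}$ with $h\neq0$, a nontrivial Jordan block at $1$.

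For (iii), set $\gamma=\beta=0$. Then $\det T_q=1$ and $\operatorname{tr}T_q=2-h^2q$, so the eigenvalues have product one.
\begin{itemize}
\item If $|2-h^2q|<2$, i.e. $0<h<2/\sqrt q$, the discriminant is negative. The eigenvalues are complex conjugates of modulus one and distinct, so the matrix is diagonalizable and marginally stable, but not Schur stable.
\item At $h=2/\sqrt q$ we get $\operatorname{tr}T_q=-2$, a double eigenvalue $-1$. It is defective because $T_q\neq-\Id$: the off-diagonal entry $-hq$ is nonzero.
\item For larger $h$ the trace is below $-2$. The eigenvalues are then real, distinct, with product one, so one of them has modulus greater than one.
\end{itemize}

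The only delicate points are the cancellation in $\det T_q$ and checking that $\det T_q>-1$ is implied by (and holds at equality in) \eqref{eq:quadratic-stability}, rather than being an extra condition. Everything else is routine.
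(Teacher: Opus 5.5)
Your proposal is correct and follows the paper's proof essentially step for step. It uses the same trace and determinant formulas, the Jury criterion through $p(1)=h^2q/(1+h\gamma)$, $p(-1)$ and $1-\det T_q$, the intermediate-value argument for a root below $-1$, and the same case analysis for (ii) and (iii). The differences are only cosmetic: at equality you obtain $\det T_q>-1$ directly from $hq\beta<2+h\gamma$, whereas the paper deduces it from $p(1)>0$, and for $h>2/\sqrt q$ in (iii) you use that the two distinct real roots have product one instead of the sign of $p(-1)$.
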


\begin{proof}
Write
$$
\tau_q=\operatorname{tr}T_q,
\qquad
\delta_q=\det T_q.
$$
A direct calculation gives
\begin{equation}
\label{eq:modal-trace-determinant}
\tau_q=
\frac{2+h\gamma-hq(h+\beta)}{1+h\gamma},
\qquad
\delta_q=\frac{1-hq\beta}{1+h\gamma}.
\end{equation}
The characteristic polynomial is
$p_q(\lambda)=\lambda^2-\tau_q\lambda+\delta_q$. For a real monic
quadratic, the Jury criterion is
\begin{equation}
\label{eq:jury-three-conditions}
1-\tau_q+\delta_q>0,
\qquad
1+\tau_q+\delta_q>0,
\qquad
1-\delta_q>0.
\end{equation}
Here the three left-hand sides are exactly
\begin{equation}
\label{eq:jury-factorization}
\frac{h^2q}{1+h\gamma},
\qquad
\frac{4+2h\gamma-qh(h+2\beta)}{1+h\gamma},
\qquad
\frac{h(\gamma+q\beta)}{1+h\gamma}.
\end{equation}
For $q>0$ and $\gamma+q\beta>0$, the first and third quantities are
strictly positive, so the middle inequality is equivalent to
\eqref{eq:quadratic-stability}.

At equality, $p_q(-1)=1+\tau_q+\delta_q=0$. Thus one eigenvalue is $-1$.
Moreover,
$$
p_q(1)=1-\tau_q+\delta_q>0,
$$
which gives $-\delta_q<1$, while the third Jury quantity gives
$\delta_q<1$, hence $-\delta_q>-1$. Thus the second eigenvalue lies
strictly inside the unit disk.
If the left-hand side of \eqref{eq:quadratic-stability} is larger,
$p_q(-1)<0$ while $p_q(\lambda)\to+\infty$ as
$\lambda\to-\infty$; hence there is a real root below $-1$.

For $q=0$, the matrix is upper triangular and the two displayed
eigenvalues follow immediately. If also $\gamma=0$, it equals
$$
\begin{pmatrix}1&h\\0&1\end{pmatrix},
$$
which proves~(ii).

Finally, if $q>0$ and $\gamma=\beta=0$, then $\delta_q=1$. For
$0<h<2/\sqrt q$, the trace lies in $(-2,2)$, so the two distinct
eigenvalues are complex conjugates of modulus one. At $h=2/\sqrt q$, the
characteristic polynomial is $(\lambda+1)^2$ and $T_q\ne-\Id$, hence the
double eigenvalue is defective. Above that value, the argument using
$p_q(-1)<0$ again gives a root below $-1$. This proves~(iii).
\end{proof}

The spectral theorem turns the scalar classification into a uniform one.

\begin{corollary}
Assume $0<\mu=\min\sigma(Q)\leq\max\sigma(Q)=L$.
If $(\gamma,\beta)\ne(0,0)$, the frictionless quadratic iteration is
uniformly Schur stable if and only if
\begin{equation}
\label{eq:cond-quad-unif}
0<h<h_{\mathrm{quad}},
\qquad
h_{\mathrm{quad}}
=\frac{\gamma-L\beta+
\sqrt{(\gamma-L\beta)^2+4L}}{L}.
\end{equation}
At $h=h_{\mathrm{quad}}$, the $q=L$ symbol has the eigenvalue $-1$; for
$h>h_{\mathrm{quad}}$ it is unstable. If $\gamma=\beta=0$, the iteration
is marginally stable, but not Schur stable, precisely for
$0<h<2/\sqrt L$.
\end{corollary}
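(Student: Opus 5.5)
The corollary follows from Theorem~\ref{thm:classif-modale} together with the spectral theorem. The frictionless iteration is linear and acts diagonally on the spectral decomposition of $Q$, so $\sup_{q\in\sigma(Q)}\rho(T_q)<1$ is the relevant quantity. I would use that $\sigma(Q)\subseteq[\mu,L]$ is compact and contains both $\mu$ and $L$. I would also use that $q\mapsto T_q$ is continuous, so $q\mapsto\rho(T_q)$ is continuous. Hence the supremum is a maximum, and uniform Schur stability is equivalent to $T_q$ being Schur stable for every $q\in\sigma(Q)$.

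For $(\gamma,\beta)\ne(0,0)$ and $q\geq\mu>0$, we have $\gamma+q\beta>0$, so part~(i) of Theorem~\ref{thm:classif-modale} applies to every spectral value. Stability of $T_q$ is therefore equivalent to $qh(h+2\beta)<4+2h\gamma$. The left-hand side is increasing in $q$, so the condition for all $q\in\sigma(Q)$ reduces to the single condition at $q=L$. That condition is $Lh^2+2(L\beta-\gamma)h-4<0$.

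This is a convex quadratic in $h$ whose value at $h=0$ is $-4<0$. It therefore has one negative root and one positive root, and the positive root is exactly $h_{\mathrm{quad}}$ as displayed in \eqref{eq:cond-quad-unif}. So for $h>0$ the inequality holds if and only if $h<h_{\mathrm{quad}}$.

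At $h=h_{\mathrm{quad}}$ we have equality for $q=L$. Part~(i) then gives the eigenvalue $-1$, so $\rho(T_L)=1$ and stability fails. For $h>h_{\mathrm{quad}}$, part~(i) gives a real eigenvalue below $-1$.

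In the case $\gamma=\beta=0$, part~(iii) gives that each $T_q$ is never Schur stable. Marginal stability for all $q\in[\mu,L]$ requires $h<2/\sqrt q$ for every such $q$, which is equivalent to $h<2/\sqrt L$. At and above this threshold, the mode $q=L$ has a defective eigenvalue $-1$ or spectral radius larger than one.

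The main subtlety is the uniform passage from modes to the operator. Showing $\sup_q\rho(T_q)<1$ requires compactness and continuity, which handle the possibly continuous spectrum. For marginal stability in the operator sense, one should additionally note that power-boundedness is uniform in $q$ on the compact set $[\mu,L]$. The monodromy matrices $T_q$ have distinct unimodular eigenvalues there, with a gap bounded below when $h<2/\sqrt L$. Hence the diagonalizing similarities are uniformly bounded, which gives uniform power-boundedness.
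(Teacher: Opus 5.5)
Your proposal is correct and follows the paper's proof. You apply part~(i) of Theorem~\ref{thm:classif-modale} to each mode, use monotonicity in $q$ to reduce to the worst mode $q=L$, solve the resulting quadratic inequality in $h$, and invoke part~(iii) when $\gamma=\beta=0$. You also make explicit two points the paper leaves implicit: compactness of $\sigma(Q)$ together with continuity of $q\mapsto\rho(T_q)$ makes the supremum a maximum, and in the marginal case the uniform eigenvalue gap gives uniform power-boundedness.
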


\begin{proof}
For fixed $h>0$ and $\beta\geq0$, the left-hand side of
\eqref{eq:quadratic-stability} is increasing in $q$. Thus the worst mode is
$q=L$. Solving
$$
Lh(h+2\beta)<4+2h\gamma
$$
for $h>0$ gives \eqref{eq:cond-quad-unif}. Strictness and the
classification at and above the boundary follow from
Theorem~\ref{thm:classif-modale}. When
$\gamma=\beta=0$, apply part~(iii) uniformly over
$q\in[\mu,L]$.
\end{proof}

We now compare the sufficient condition obtained from the Lyapunov
analysis with the exact quadratic condition.

\begin{proposition}
\label{prop:lyapunov-modal-gap}
Let $L>0$, $\gamma>0$, and $\beta\geq0$. Then the Lyapunov boundary is
strictly conservative:
\begin{equation}
\label{eq:strict-certificate-gap}
0<h_{\mathrm{Lyap}}<h_{\mathrm{quad}},
\end{equation}
and the gap is exactly
\begin{equation}
\label{eq:certificate-gap-formula}
h_{\mathrm{quad}}-h_{\mathrm{Lyap}}
=\frac{4}{
(1+\beta\gamma)\left[
\sqrt{(\gamma-L\beta)^2+4L}
+\sqrt{(\gamma-L\beta)^2+
\dfrac{4L\beta\gamma}{1+\beta\gamma}}
\right]}.
\end{equation}
If $\gamma=0<L$ and $\beta>0$, the matrix $M_h$ is not positive
semidefinite for any positive step, whereas the modal Schur interval
$0<h<h_{\mathrm{quad}}$ is nonempty.
\end{proposition}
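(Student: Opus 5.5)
Both boundaries are explicit, so the plan is a direct algebraic comparison. Set $a=\gamma-L\beta$, $D_q=a^2+4L$ and $D_\ell=a^2+4L\beta\gamma/(1+\beta\gamma)$. Then \eqref{eq:h-lyap} and \eqref{eq:cond-quad-unif} give
$$
h_{\mathrm{quad}}-h_{\mathrm{Lyap}}=\frac{\sqrt{D_q}-\sqrt{D_\ell}}{L}
=\frac{D_q-D_\ell}{L\bigl(\sqrt{D_q}+\sqrt{D_\ell}\bigr)}.
$$
The key identity is
$$
D_q-D_\ell=4L\left(1-\frac{\beta\gamma}{1+\beta\gamma}\right)=\frac{4L}{1+\beta\gamma}.
$$
Substituting it cancels the factor $L$ and yields exactly \eqref{eq:certificate-gap-formula}. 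The right-hand side there is strictly positive, since the denominator is finite and positive ($D_q>0$ because $L>0$). This proves the strict inequality $h_{\mathrm{Lyap}}<h_{\mathrm{quad}}$.

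For the left inequality $h_{\mathrm{Lyap}}>0$ I will use Theorem~\ref{thm:lyapunov-certificate}(i). When $\beta=0$ one has $h_{\mathrm{Lyap}}=2\gamma/L>0$. When $\beta>0$, the concave polynomial $P$ in \eqref{eq:lyapunov-determinant} satisfies $P(0)=4\beta\gamma/(1+\beta\gamma)>0$, so its positive root is strictly positive. Equivalently, $D_\ell>a^2$ implies $\sqrt{D_\ell}>|a|\ge -a$, hence $a+\sqrt{D_\ell}>0$. The $\beta=0$ case also follows from this form, since then $\sqrt{D_\ell}=|\gamma|=\gamma$ and $a+\sqrt{D_\ell}=2\gamma$.

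For the last assertion ($\gamma=0<L$, $\beta>0$), nonsemidefiniteness of $M_h$ is Theorem~\ref{thm:lyapunov-certificate}(iv): $A_h=-Lh^2/2<0$. For the modal interval, the hypothesis $(\gamma,\beta)\ne(0,0)$ of the corollary holds, and $h_{\mathrm{quad}}=\bigl(-L\beta+\sqrt{L^2\beta^2+4L}\bigr)/L>0$ because $L^2\beta^2+4L>L^2\beta^2$. Hence $(0,h_{\mathrm{quad}})$ is nonempty.

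There is no genuine obstacle here. The only points needing care are checking the simplification $1-\beta\gamma/(1+\beta\gamma)=1/(1+\beta\gamma)$, and handling the sign of $a=\gamma-L\beta$, which may be negative, when proving $h_{\mathrm{Lyap}}>0$.
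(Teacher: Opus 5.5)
Your proof is correct and follows essentially the same route as the paper. You write both boundaries with the common affine part $a=\gamma-L\beta$, compare the radicands via $D_q-D_\ell=4L/(1+\beta\gamma)$, rationalize to obtain the exact gap, treat $\beta=0$ and $\beta>0$ separately for $h_{\mathrm{Lyap}}>0$, and use Theorem~\ref{thm:lyapunov-certificate}(iv) together with the explicit formula for $h_{\mathrm{quad}}$ for the last assertion. The only difference is cosmetic: you read off $h_{\mathrm{Lyap}}<h_{\mathrm{quad}}$ from the positivity of the rationalized gap, whereas the paper reads it directly from the strict ordering of the radicands.
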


\begin{proof}
Set $a=\gamma-L\beta$. The two boundaries have the same affine part
$a/L$, while their radicands differ because
$$
0\leq\frac{\beta\gamma}{1+\beta\gamma}<1.
$$
If $\beta=0$, then $h_{\mathrm{Lyap}}=2\gamma/L>0$. If $\beta>0$, the
second radicand in \eqref{eq:h-lyap} is strictly larger than $a^2$, which
again gives $h_{\mathrm{Lyap}}>0$. The strict comparison with
$h_{\mathrm{quad}}$ follows from the displayed inequality.
Rationalizing the difference of the two square roots gives
\eqref{eq:certificate-gap-formula}. The last assertion follows from
Theorem~\ref{thm:lyapunov-certificate}(iv) and from
\eqref{eq:cond-quad-unif}.
\end{proof}

\begin{remark}
{For the quadratic objective of this section and the simplified Coulomb
friction law $\phi(v)=r\norm{v}$,}
Proposition~\ref{prop:precision-fort-conv} gives
$$
\norm{\bar x-Q^{-1}b}\leq\frac{r}{\mu},
\qquad
f(\bar x)-f(Q^{-1}b)\leq\frac{r^2}{2\mu}
$$
for every continuous or discrete limit point $\bar x$, whether it is
reached by finite-time stabilization of the continuous dynamics, by finite
convergence of the discrete iterates, or only
asymptotically. Thus these terminal accuracy estimates do not require
finite stopping.
\end{remark}
% ============================================================================
\section{Numerical Experiments}
\label{sec:numerical-validation}

\subsection{{Numerical Setting}}

We compare the dynamics without Hessian damping (DF), with explicit
Hessian-driven damping (EH), and with implicit Hessian-driven damping (IH).
We also compare their proximal discretizations. The experiments illustrate
the exact quadratic stability boundary, the equality of EH and IH for
quadratic functions, the damping effect for nonlinear functions, and the
role of the strict interior condition in finite-time stabilization and in
finite convergence of the discrete iterates.

All computations are deterministic and were performed in MATLAB R2021b.
{The simplified Coulomb friction potential is treated by its exact
proximal map. It is never smoothed.}
For the fine-step approximations of the continuous dynamics we use the forces
$$
\nabla f(x),\qquad
\nabla f(x)+\beta\nabla^2f(x)v,
\qquad
\nabla f(x+\beta v)
$$
for DF, EH, and IH, respectively. For the algorithms, IH is precisely
\eqref{eq:proximal-update}, whereas EH replaces the Hessian-vector product by
the stored gradient difference $\beta(\nabla f(x_k)-\nabla f(x_{k-1}))$.
Thus, after initialization, each algorithm requires one new gradient and one
proximal evaluation per iteration. A trajectory is declared permanently
stopped when the proximal velocity update returns zero and the
equilibrium condition $\norm{\nabla f(x)}\leq r$ is satisfied. The state is then kept
fixed.

\subsection{{Quadratic Identity and Modal Stability}}
We first consider a quadratic function without dry friction, with $
\mu=10^{-2},\; L=1,\; \gamma=1.$
Figure~\ref{fig:stab-modale-quad} displays the result. At
$\beta=0.4$, the two thresholds are
$$
h_{\rm Lyap}=1.8259,
\qquad
h_{\rm quad}=2.6881.
$$
The iteration decays both below $h_{\rm Lyap}$ and in the nonempty interval
$(h_{\rm Lyap},h_{\rm quad})$, whereas it grows after crossing the exact
modal boundary. Thus the condition obtained from the nonlinear
Lyapunov analysis is sufficient but not necessary for quadratic stability.

\begin{figure}[t]
\centering
\includegraphics[width=0.98\linewidth]{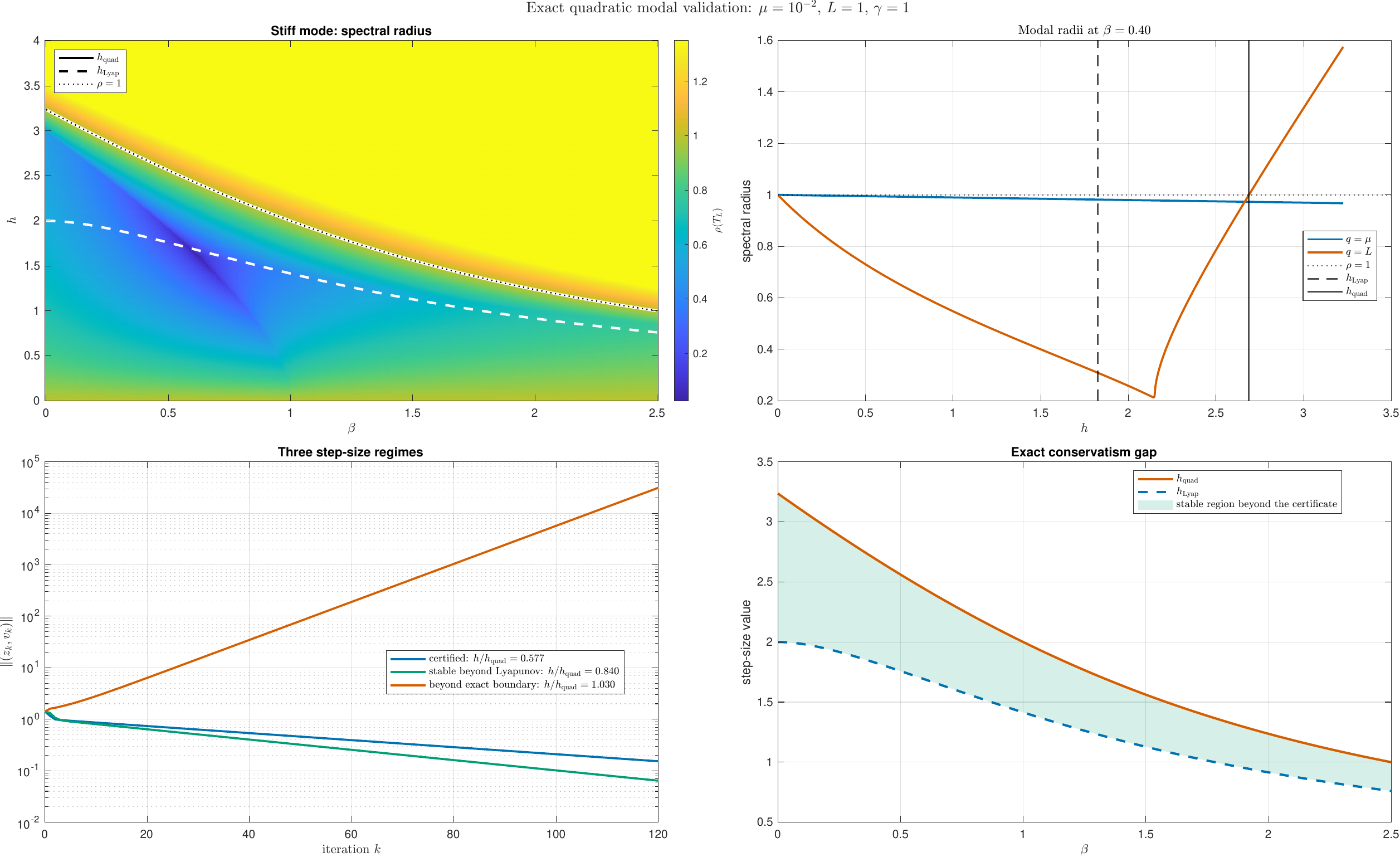}
\caption{{Quadratic modal stability.} The upper-left panel compares the
spectral-radius transition with $h_{\rm quad}$ and $h_{\rm Lyap}$. The
upper-right panel identifies the stiff mode as the limiting one. The lower
panels exhibit stable trajectories on both sides of the sufficient
Lyapunov bound
and instability beyond the exact boundary.}
\label{fig:stab-modale-quad}
\end{figure}

{We next include the simplified Coulomb friction law} and use a rotated positive-definite
quadratic with condition number $100$, with
$\gamma=1$, $\beta=0.4$, $r=0.1$, and $h=0.05$. The DF algorithm stops at
$t=7.35$, whereas EH and IH both stop at $t=4.45$. More importantly, their
complete discrete trajectories agree up to machine precision. This
illustrates the exact EH-IH identity on quadratic objectives, including the
{corresponding proximal dry-friction step} and permanent stopping.

\subsection{Nonlinear Damping Effects}

\paragraph{Curved Valley.}
We begin with a globally smooth curved valley. To compare the two
Hessian-driven terms outside the quadratic setting, we
consider
\begin{equation}
\label{eq:numerical-curved-valley}
f(x)=\frac{\mu}{2}\norm{x}^{2}+a(1-\cos x_1)
+c\bigl(1-\cos(x_2-\alpha\sin x_1)\bigr),
\end{equation}
with $(\mu,a,c,\alpha)=(1,0.2,0.6,1.2).$
This objective is nonquadratic and has a curved valley, while its gradient is
globally Lipschitz. The Hessian row sums give the valid bound $L\leq3.504$.
{We take $x_0=(-0.5,3)$, $v_0=0$, $\gamma=1.5$, $\beta=0.25$, and
$r=0.03$. The continuous dynamics are computed on $0\leq t\leq12$ with
$h=2\times10^{-3}$. The algorithms use the visibly coarser step
$h=5\times10^{-2}$ and a maximum of $250$ iterations.}

As shown in Figure~\ref{fig:curved-valley-comparison}, both Hessian corrections
reduce the overshoot near the valley floor. {The fine-step stopping times
are $7.986$ for DF, $6.020$ for EH, and $6.068$ for IH. The discrete
sequences become constant after $163$, $120$, and $121$ iterations,
respectively.} The same reduction is observed for the implicit system and
for its proximal algorithm. The small difference between EH and IH, which
is absent for quadratic functions, is now visible.

\begin{figure}[H]
\centering
\includegraphics[width=0.98\linewidth]{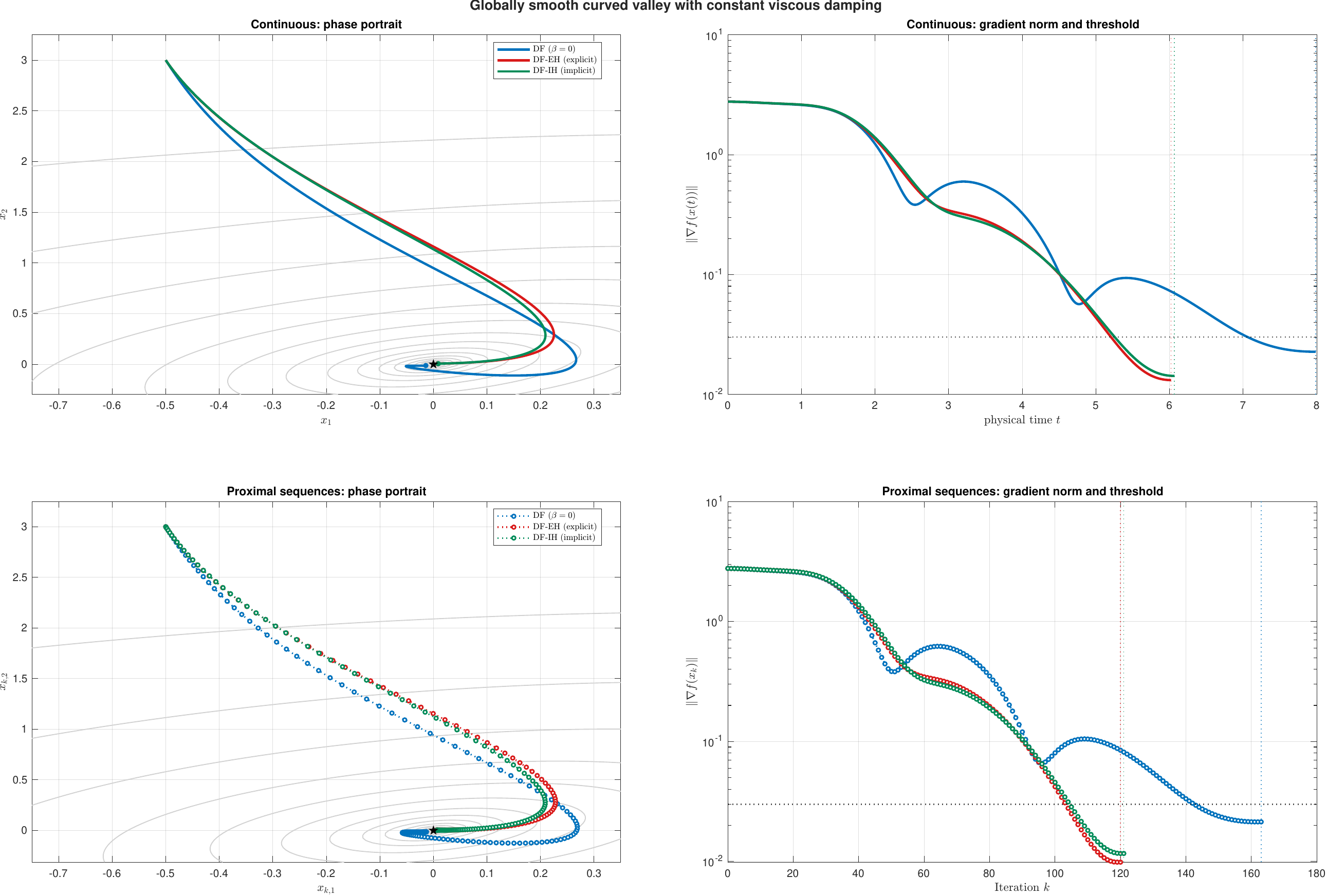}
\caption{Globally smooth curved valley. {The top row shows the continuous
trajectories as functions of the physical time $t$. The bottom row shows the
proximal sequences: open circles mark the computed iterates and dotted
segments connect consecutive iterates. Its axes are $(x_{k,1},x_{k,2})$ in
the phase portrait and the iteration index $k$ in the gradient plot.} The
{horizontal dotted line is the dry-friction threshold $r$;} vertical dotted lines
mark the detected permanent stopping times and indices. {Here $r=0.03$,
the continuous horizon is $T=12$, and the discrete budget is $250$
iterations.}}
\label{fig:curved-valley-comparison}
\end{figure}

{The remaining functions are convex, coercive, and smooth, but their
Hessians are unbounded. They satisfy the local assumptions of the continuous
analysis. The global step-size condition of
Section~\ref{sec:discrete-analysis} does not apply, so the discrete
trajectories are shown only for comparison.}

\paragraph{Anisotropic Quartic.}
Consider the strongly anisotropic quartic
\begin{equation}
\label{eq:numerical-example-one}
f(x_1,x_2)=x_1^4+10^4x_2^4.
\end{equation}
We use $x_0=(-1,-1)$, $v_0=0$, $\gamma=1$, $\beta=0.05$, and $r=0.03$.
The fine-step dynamics use $h=2\times10^{-5}$ and the algorithms use
$h=10^{-4}$. Figure~\ref{fig:example-one} makes the damping effect especially
clear. Without Hessian correction, the trajectory performs many large
transverse oscillations and stops at $t=12.237$. EH and IH suppress
these oscillations and stop at $t=8.863$ and $t=8.808$, a reduction of about
$28\%$. The corresponding algorithmic stopping times are $12.229$, $8.863$,
and $8.808$. {The stopping times obtained with the two step sizes are
close.}

\begin{figure}[t]
\centering
\includegraphics[width=0.98\linewidth]{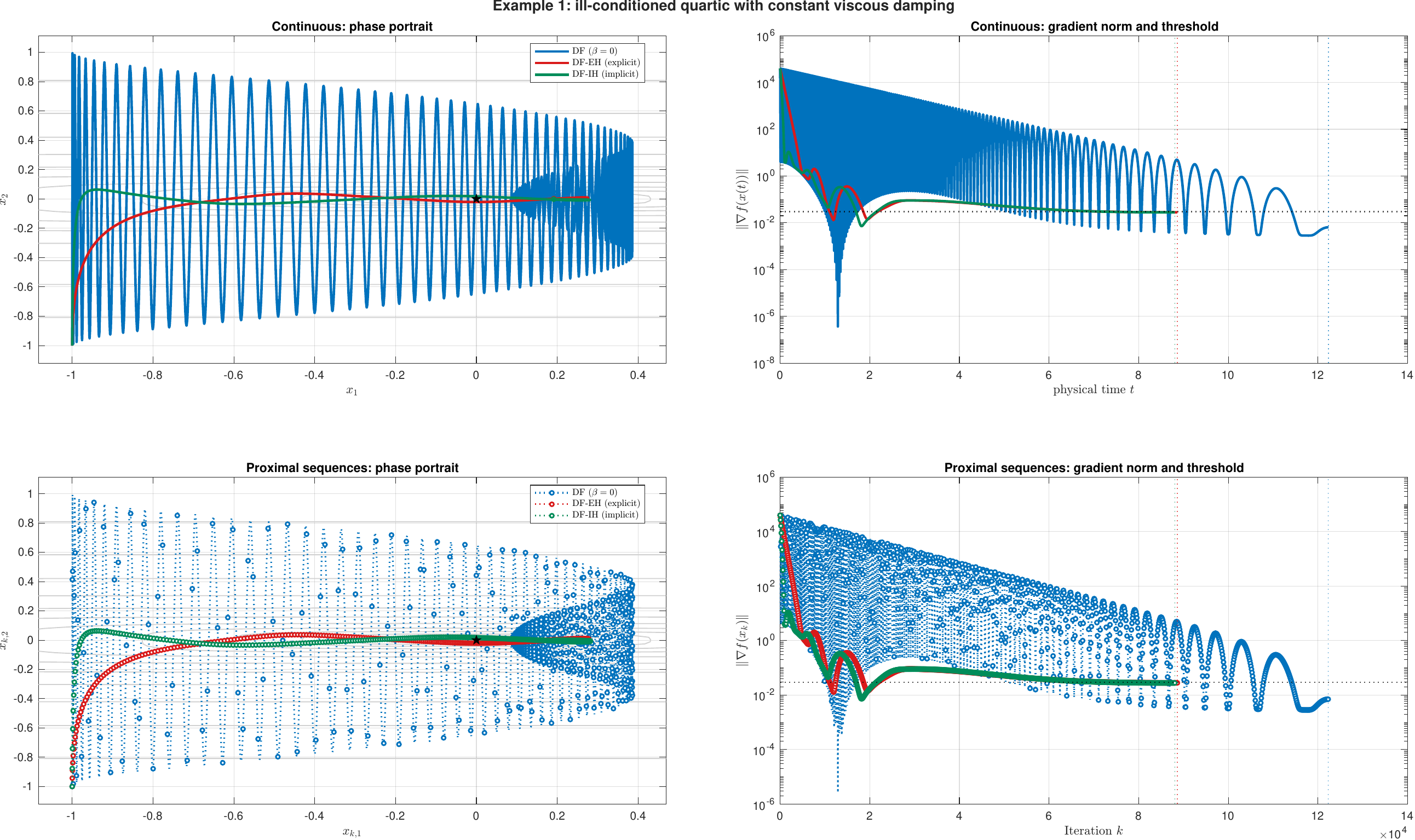}
\caption{Anisotropic quartic with fixed viscous damping. The ill-conditioned direction
produces sustained oscillations for DF, while EH and IH rapidly align the
trajectory with the flat direction. {The top row shows the continuous
trajectories as functions of $t$. The bottom row is indexed by $k$, with
phase variables $(x_{k,1},x_{k,2})$. Dotted segments connect consecutive
iterates, and open circles mark a uniformly selected subset of the computed
iterates because the discretization uses a very fine step.} The two rows use
the same physical parameters.
}
\label{fig:example-one}
\end{figure}
\FloatBarrier

\paragraph{Dependence on the Shift Parameter.}
To expose the dependence on the shift parameter, we also consider the scaled
objective $10^{-2}(x_1^4+10^4x_2^4)$ with $\gamma=1$ and $r=10^{-4}$ over the
fixed horizon $[0,45]$. Figure~\ref{fig:fixed-gamma-sweep} uses the same nine
values
$$
\beta\in\{0,0.1,0.25,0.5,1,1.75,2.75,3.75,5\}
$$
in all four panels, and identical axes throughout. The black curve is the
common $\beta=0$ baseline. Increasing $\beta$ first damps the stiff-direction
oscillations and straightens the path; for larger values it also changes the
longitudinal progress. The figure is therefore interpreted as
a finite-horizon geometric comparison, not as evidence that the stopping
time must be monotone in $\beta$. The continuous and algorithmic panels show
the same qualitative transition.

\begin{figure}[t]
\centering
\includegraphics[width=0.98\linewidth]{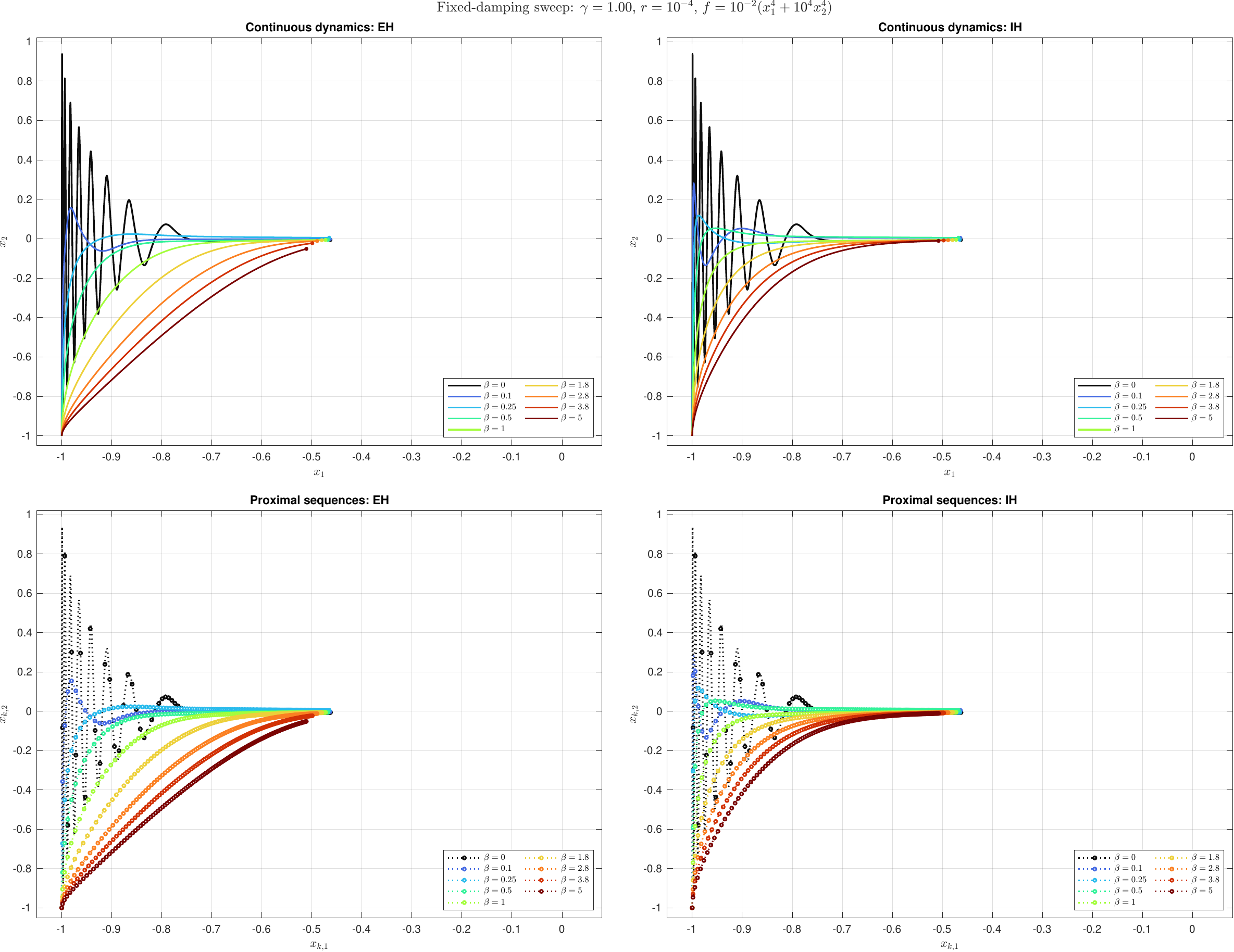}
\caption{Fixed-$\gamma$ sweep on the scaled ill-conditioned quartic. The
explicit (left) and implicit (right) corrections are compared for the
continuous dynamics (top) and proximal algorithms (bottom). The common black
curve corresponds to $\beta=0$. {Each panel contains the legend for the
tested values of $\beta$. In the bottom row, open circles mark computed
iterates, dotted segments connect consecutive iterates, and the phase
variables are $(x_{k,1},x_{k,2})$.}}
\label{fig:fixed-gamma-sweep}
\end{figure}
\FloatBarrier

\subsection{{Boundary Case}}

We finally consider
\begin{equation}
\label{eq:numerical-example-two}
f(x_1,x_2)=x_1^4+5x_2^2-4x_1-10x_2+8,
\end{equation}
whose unique minimizer is $(1,1)$ and whose minimum value is zero. We take
$x_0=(-1,-1)$, $v_0=0$, $\gamma=1$, $\beta=0.5$, and $r=0.1$. The fine-step
and algorithmic step sizes are $5\times10^{-4}$ and $10^{-2}$, respectively,
and the final horizon is $T=40$. The continuous plots are restricted to
$0\leq t\leq10$, while the discrete plots show the first $1000$ iterates.
The computations themselves are continued up to the common physical
horizon $T=40$.

The behavior is different in this example. DF reaches
a permanent stop at $t=7.998$ for the fine-step dynamics and at $t=8.020$
for the algorithm. EH and IH do not permanently stop before $T=40$. Instead,
their gradient norms at $T=40$ are equal to $r$ up to numerical
precision. Figure~\ref{fig:example-two} shows the two corrected trajectories
approaching the boundary defined by $\norm{\nabla f}=r$. The limiting force does
not satisfy the strict interior condition. This example complements the
boundary example in Proposition~\ref{prop:boundary-no-stopping} and explains why the
strict inequality in \eqref{eq:strict-reference-margin} cannot simply be
replaced by a non-strict one.

\begin{figure}[t]
\centering
\includegraphics[width=0.98\linewidth]{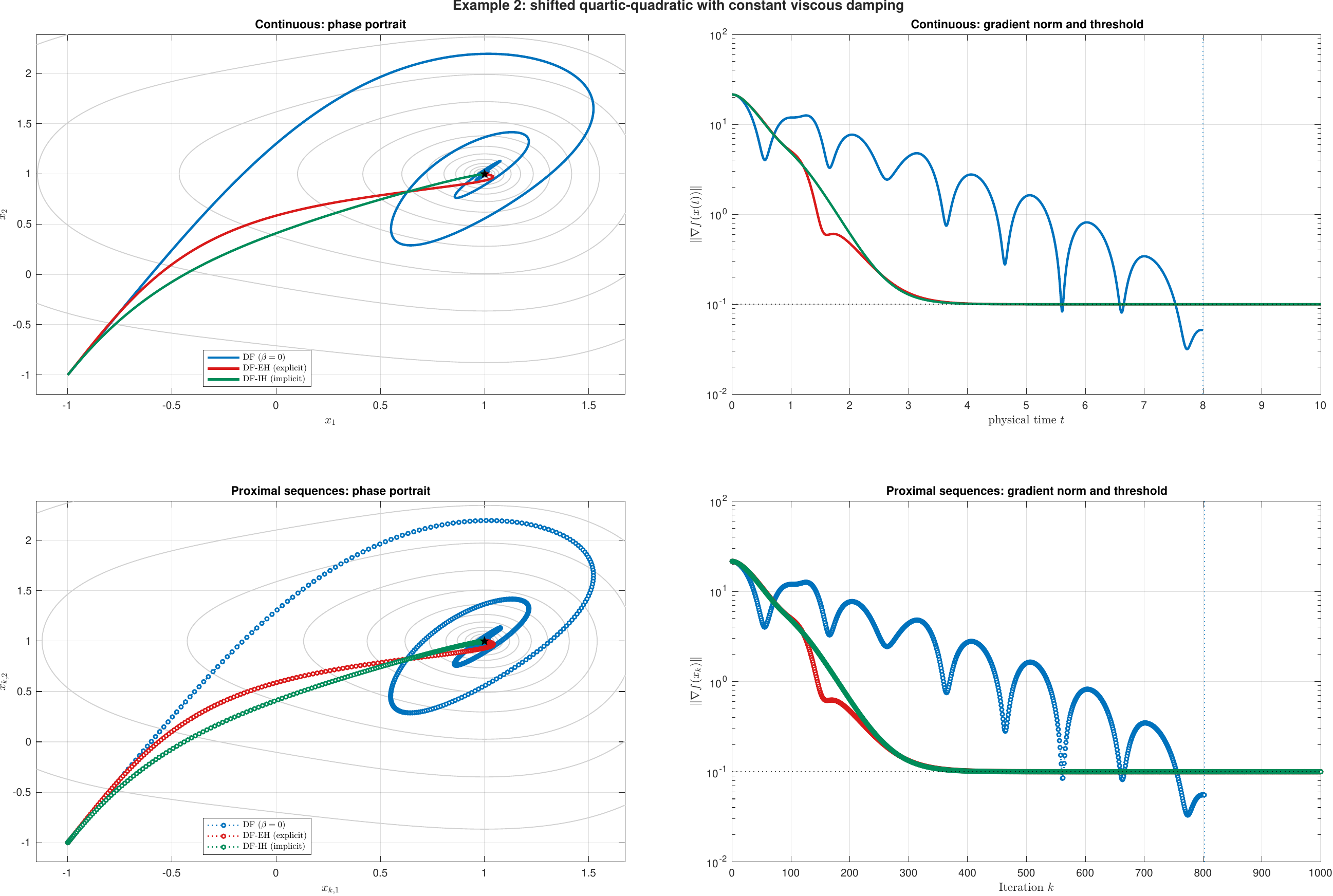}
\caption{Boundary case. The top row shows the continuous trajectories on
$0\leq t\leq10$. All three curves are solid. The bottom row shows the first
$1000$ iterates of the proximal algorithms; open circles identify every
computed iterate and dotted segments are only visual interpolants. DF enters
{the region $\norm{\nabla f}<r$} and stops permanently. EH and IH
approach the threshold $\norm{\nabla f}=r$. The computations were continued to
$T=40$ without a detected permanent stop for EH or IH.}
\label{fig:example-two}
\end{figure}
\FloatBarrier

Table~\ref{tab:numerical-stopping-times} collects the stopping times. The
agreement between the fine-step dynamics and the algorithms is particularly
close for the anisotropic quartic, while the coarser curved-valley
discretization correctly preserves the ordering and the damping gain. The
symbol $>40$ in {the boundary-case test} means absence of a detected
permanent stop before the prescribed horizon; it does not mean divergence.

\begin{table}[H]
\centering
\small
\setlength{\tabcolsep}{5pt}
\begin{tabular}{lcccccc}
\toprule
& \multicolumn{3}{c}{Fine-step dynamics} &
  \multicolumn{3}{c}{Proximal algorithms} \\
\cmidrule(lr){2-4}\cmidrule(lr){5-7}
Test & DF & EH & IH & DF & EH & IH \\
\midrule
Rotated quadratic & {--} & {--} & {--} & $7.350$ & $4.450$ & $4.450$ \\
{Curved valley} & {$7.986$} & {$6.020$} & {$6.068$}
& {$8.150$} & {$6.000$} & {$6.050$} \\
Anisotropic quartic & $12.237$ & $8.863$ & $8.808$ & $12.229$ & $8.863$ & $8.808$ \\
{Boundary case} & $7.998$ & $>40$ & $>40$ & $8.020$ & $>40$ & $>40$ \\
\bottomrule
\end{tabular}
\caption{Detected permanent stopping times in physical time. {A dash
indicates that no fine-step continuous simulation was performed for the
corresponding quadratic experiment.}}
\label{tab:numerical-stopping-times}
\end{table}
\FloatBarrier
{The modal experiment illustrates the exact quadratic stability
boundary. The quadratic trajectory gives the same discrete sequence for EH
and IH. In the nonlinear examples, the two Hessian-driven systems reduce the
oscillations in a similar way. The last example shows what may occur when the
limiting force lies on the boundary of $\partial\phi(0)$. We draw no
conclusion about the nonlinear stability boundary from these experiments.}

\section{Conclusion}
Starting from the implicit Hessian-driven model introduced in \cite{AlecsaLaszloPinta2021}, we studied its interaction with dry friction
when the viscous and shift coefficients are constant. This paper complements the previous works~\cite{AdlyAttouch2022,AdlyAttouch2020}. The shifted variable $x+\beta\dot x$ provides an energy suitable for both the continuous system and
its proximal discretization. This analysis gives finite-time stabilization under a strict interior condition, an $O(\beta^2)$ comparison with explicit
 Hessian-driven damping and finite convergence of the discrete iterations. The scalar boundary example shows, however, that
the equilibrium condition $-\nabla f(x_\infty)\in\partial\phi(0)$ does not necessarily imply finite-time stabilization.

Several questions remain open and needs further investigations. The first concerns the limit of the
static friction set. When $-\nabla f(x_\infty)\in\bd(\partial\phi(0))$, finite time
stabilization may hold or fail. A complete characterization of these two cases would be insightful. This would clarify 
whether the stopping times and the stopping positions of the explicit and the implicit systems can still be compared
without the strict interior condition.

The second question concerns the global Lipschitz condition of $\nabla f$ used in discrete analysis. This assumption is not
 necessary for the continuous time analysis, but it provides a fixed step size for the discrete energy estimation.
A local test or a backtracking rule may allow the step size to be chosen along the computed sequence. The difficulty lies in maintaining the discrete energy estimation and finite convergence when the step size varies.

The third question is mechanical. The potential $\phi(v)=r\norm{v}$ describes the simplified Coulomb friction law used here
because of its convex shape and its direct relationship with proximal algorithms in optimization.
For unilateral contact, one must also consider the normal reaction, a state-dependent friction threshold, and possibly 
separate static and dynamic friction coefficients. It remains to be determined whether the displaced energy and the 
finite-time arguments extend to this setting.

All these questions are beyond the scope of this manuscript and will be investigated in a future research project.

% ============================================================================
\appendix

\section{Proof of the Sharpness Result}
\label{app:sharpness-proof}

\begin{proof}
Take
$$
f(x)=\frac{x^2}{2}+a(1-\cos x),
\qquad 0<a<1,
$$
$\phi(v)=r|v|$, $\bar x_0=\pi/2$, and $v_0>0$.
The notation $\bar x_0$ is used here for the prescribed
initial position, in order to distinguish it from the reference trajectory
$x^0$. Then
$$
f''(x)=1+a\cos x\geq1-a>0,
\qquad
f'''(x)=-a\sin x,
$$
so all the announced global regularity properties hold and
$f'''(\bar x_0)=-a\neq0$.

We first work with smooth fixed-sign extensions of the dynamics. As long as
the velocity is positive, {the dry-friction selection is $r$}, and the two models
reduce to
\begin{align*}
\dot x_\beta^I&=v_\beta^I,
&\dot v_\beta^I
&=-\gamma v_\beta^I-r-f'(x_\beta^I+\beta v_\beta^I),\\
\dot x_\beta^E&=v_\beta^E,
&\dot v_\beta^E
&=-\gamma v_\beta^E-r-f'(x_\beta^E)
-\beta f''(x_\beta^E)v_\beta^E.
\end{align*}
We regard these smooth equations as extensions beyond their first velocity
zeros. For $J\in\{I,E\}$, denote the corresponding smooth
phase-space vector field by $F_J(\beta,z)$, $z=(x,v)$. Since the function
$f$ chosen above is $C^\infty$, each $F_J$ is $C^2$ jointly in $(\beta,z)$
on bounded sets. The parameter-dependent Cauchy theorem for ordinary
differential equations therefore shows that, on every common compact
existence interval, the map
\[
\beta\longmapsto z_\beta^J=(x_\beta^J,v_\beta^J)
\]
is $C^2$ with values in $C^1$ of that interval. At $\beta=0$ the two vector
fields coincide and
\[
\partial_\beta F_I(0,x,v)
=\partial_\beta F_E(0,x,v)
=\bigl(0,-f''(x)v\bigr).
\]
The first parameter variations consequently solve the same linear Cauchy
problem with zero initial data and hence coincide. Define
\[
(U,W):=\frac12\left.
\partial_{\beta\beta}\bigl(z_\beta^I-z_\beta^E\bigr)
\right|_{\beta=0}.
\]
Taylor's formula in $C^1$ then gives, uniformly on the compact interval,
\begin{equation}
\label{eq:dev-var-seconde-unif}
x_\beta^I-x_\beta^E=\beta^2U+o(\beta^2),
\qquad
v_\beta^I-v_\beta^E=\beta^2W+o(\beta^2),
\end{equation}
where subtraction of the second variational equations yields
\begin{equation}
\label{eq:sys-var-optimalite}
\dot U=W,
\qquad
\dot W+\gamma W+f''(x^0)U
=-\frac12f'''(x^0)(v^0)^2,
\qquad U(0)=W(0)=0.
\end{equation}
In particular,
$$
\dot W(0)=-\frac12f'''(\bar x_0)v_0^2
=\frac a2v_0^2>0.
$$

We now choose the friction radius in a way that controls both the stopping
event and the sign of $W$ at that event. Write $(x_r^0,v_r^0)$ for the
reference fixed-sign solution and rescale time by
$$
X_r(s)=x_r^0(s/r),
\qquad
V_r(s)=v_r^0(s/r).
$$
Up to the first zero of the velocity,
$$
X_r'=\frac1rV_r,
\qquad
V_r'=-1-\frac1r\bigl(\gamma V_r+f'(X_r)\bigr).
$$
Continuous dependence on $1/r$ gives, uniformly on compact $s$-intervals,
$$
X_r(s)\longrightarrow \bar x_0,
\qquad
V_r(s)\longrightarrow v_0-s.
$$
The limiting zero at $s=v_0$ is transverse. It follows that, for all
sufficiently large $r$, $v_r^0$ has a unique first zero $T_r^0$ near
$v_0/r$, is positive on $[0,T_r^0)$, and
$$
rT_r^0\longrightarrow v_0.
$$
Moreover, $X_r(rT_r^0)\to \bar x_0$, so
$|f'(x_r^0(T_r^0))|<r$ for all sufficiently large $r$. This zero is
therefore a permanent stop with a strict friction margin, and
$$
\dot v_r^0(T_r^0-)=-r-f'(x_r^0(T_r^0))<0.
$$

Let $(U_r,W_r)$ solve \eqref{eq:sys-var-optimalite} along this
reference trajectory. On the rescaled interval set
$$
\widetilde U_r(s)=U_r(s/r),
\qquad
Z_r(s)=rW_r(s/r).
$$
Then
\begin{align*}
\widetilde U_r'(s)&=\frac{1}{r^2}Z_r(s),\\
Z_r'(s)&=-\frac\gamma r Z_r(s)
-f''(X_r(s))\widetilde U_r(s)
-\frac12 f'''(X_r(s))V_r(s)^2.
\end{align*}
The preceding compact convergence and Gronwall's inequality imply
$\widetilde U_r\to0$ and
$$
Z_r(s)\longrightarrow
-\frac12f'''(\bar x_0)
\int_0^s(v_0-\sigma)^2\,d\sigma
$$
uniformly for $s$ in a fixed neighborhood of $[0,v_0]$. Evaluating at
$s=rT_r^0$ gives
$$
rW_r(T_r^0)
\longrightarrow
\frac a2\int_0^{v_0}(v_0-s)^2\,ds
=\frac{av_0^3}{6}>0.
$$
Fix from now on one sufficiently large $r$ for which the strict stopping
condition holds and $W_r(T_r^0)>0$, and write $T^0=T_r^0$,
$(x^0,v^0)=(x_r^0,v_r^0)$, and $(U,W)=(U_r,W_r)$.

The transverse zero and the strict terminal margin persist
for both smooth extensions when $\beta$ is small. Indeed, set
$G_J(\beta,t)=v_\beta^J(t)$. The $C^2$ parameter dependence established
above gives $G_J\in C^2$ near $(0,T^0)$, while
\[
G_J(0,T^0)=0,
\qquad
\partial_tG_J(0,T^0)=\dot v^0(T^0-)\neq0.
\]
The implicit-function theorem therefore supplies a unique $C^2$ function
$T_\beta^J$ near $\beta=0$ such that
$G_J(\beta,T_\beta^J)=0$. Uniform continuous dependence, positivity of
$v^0$ on compact subintervals of $[0,T^0)$, and transversality at $T^0$
show that this root is the first velocity zero. At either zero one has, for
small $\beta$,
$$
|f'(x_\beta^J(T_\beta^J))|<r.
$$
Thus the smooth fixed-sign branch coincides with the physical dry-friction
trajectory before the zero, and the exact sticking criterion makes this
zero its permanent stopping time.

Differentiating
$G_J(\beta,T_\beta^J)=0$ at $\beta=0$ shows that the first derivatives of
$T_\beta^I$ and $T_\beta^E$ coincide, because the first parameter
variations of the velocity branches coincide. Since the two root functions
are $C^2$,
\[
T_\beta^I-T_\beta^E=O(\beta^2).
\]
We now evaluate the velocity difference at the moving roots. The uniform
expansion \eqref{eq:dev-var-seconde-unif}, continuity of $W$,
and the preceding estimate give
\[
v_\beta^I(T_\beta^I)-v_\beta^E(T_\beta^I)
=\beta^2W(T^0)+o(\beta^2).
\]
Moreover, Taylor's formula in time, uniformly for small $\beta$, gives
\[
v_\beta^E(T_\beta^I)-v_\beta^E(T_\beta^E)
=\dot v^0(T^0-)(T_\beta^I-T_\beta^E)+o(\beta^2).
\]
Adding these identities and using
$v_\beta^I(T_\beta^I)=v_\beta^E(T_\beta^E)=0$ yields
$$
0=\beta^2W(T^0)
+\dot v^0(T^0-)(T_\beta^I-T_\beta^E)+o(\beta^2).
$$
Consequently,
$$
\lim_{\beta\downarrow0}
\frac{T_\beta^I-T_\beta^E}{\beta^2}
=-\frac{W(T^0)}{\dot v^0(T^0-)}>0,
$$
which proves \eqref{eq:stopping-time-optimality}.

Finally, after $r$ has been fixed, the inequality $\dot W(0)>0$ allows us
to choose $t_*\in(0,T^0)$ sufficiently small that $W(t_*)>0$. The uniform
expansion \eqref{eq:dev-var-seconde-unif} then gives
\eqref{eq:state-optimality} with $c_*=W(t_*)>0$.

\end{proof}

\section{Algebraic Verification of the Discrete Lyapunov Matrix}

For completeness, the terms obtained after testing
\eqref{eq:discrete-selected-equation} can be grouped before any coefficient
is simplified:
\begin{align*}
0\geq{}&
\E_{k+1}^h-\E_k^h+h\phi(v_{k+1})
+h\gamma\norm{v_{k+1}}^2\\
&+\left(\frac12+\frac\beta h+\frac{\beta\gamma}{2}\right)
\norm{d_{k+1}}^2\\
&-\frac L2\left(
h^2\norm{v_{k+1}}^2
+2h\beta\ip{v_{k+1}}{d_{k+1}}
+\beta^2\norm{d_{k+1}}^2
\right).
\end{align*}
The coefficients of $\norm{v_{k+1}}^2$, $\norm{d_{k+1}}^2$, and
$\ip{v_{k+1}}{d_{k+1}}$ are therefore $A_h$, $C_h$, and $-Lh\beta$,
respectively. In particular, the off-diagonal entries of the representing
matrix are $-Lh\beta/2$.

Expanding its determinant gives
\begin{align*}
\det M_h
&=\left(h\gamma-\frac{Lh^2}{2}\right)
\left(\frac12+\frac\beta h+\frac{\beta\gamma}{2}
-\frac{L\beta^2}{2}\right)
-\frac{L^2h^2\beta^2}{4}\\
&=\beta\gamma
+\frac{1+\beta\gamma}{2}(\gamma-L\beta)h
-\frac{L(1+\beta\gamma)}{4}h^2,
\end{align*}
which is exactly \eqref{eq:lyapunov-determinant}. At
$h=h_{\mathrm{Lyap}}$ the determinant vanishes. The separate verification
of $A_h\geq0$ in Theorem~\ref{thm:lyapunov-certificate} is what makes this
boundary positive semidefinite rather than merely singular.

% ============================================================================

% ============================================================================
\section*{Acknowledgements}
The author gratefully acknowledges support from the Math AmSud project
N$^{\circ}$51756TF (VIPS), ECOS Project C24E06 and the FMJH Gaspard Monge
Program for Optimization and Data Science.

\end{document}